\newcommand{\zerodisplayskips}{%
  \setlength{\abovedisplayskip}{0.05in}%
  \setlength{\belowdisplayskip}{0.05in}%
  \setlength{\abovedisplayshortskip}{0in}%
  \setlength{\belowdisplayshortskip}{0.05in}}
\appto{\small}{\zerodisplayskips}
\appto{\footnotesize}{\zerodisplayskips}
\titlespacing{\section}{0pt}{0.15in}{0.1in}
\titlespacing{\subsection}{0pt}{0.05in}{0.05in}
\newtheorem{theorem}{Theorem}
\newtheorem{lemma}[theorem]{Lemma}
\newtheorem{corollary}[theorem]{Corollary}
\newtheorem{proposition}[theorem]{Proposition}
\newtheorem{definition}{Definition}
\newtheorem*{mdprob}{Maximal determinant problem}
\title{A Survey of the Hadamard Maximal Determinant Problem}
\author{Patrick Browne\thanks{Technological University of the Shannon: Midlands Midwest, Ireland} \quad Ronan Egan\thanks{Dublin City University, Ireland} \quad Fintan Hegarty\thanks{Mathematical Sciences Publishers, Berkeley, CA, USA} \quad Padraig \'O Cath\'ain\thanks{Worcester Polytechnic Institute, MA, USA}}
\date{}
\begin{document}
\maketitle
\begin{abstract}
In a celebrated paper of 1893, Hadamard
proved the maximal determinant theorem, which establishes an upper bound on the determinant of a matrix with complex entries of norm at most $1$. His paper concludes with the suggestion that mathematicians study the maximum value of the determinant of an $n \times n$ matrix with entries in $\{ \pm 1\}$. This is the Hadamard maximal determinant problem.

This survey provides complete proofs of the major results obtained thus far. We focus equally on upper bounds for the determinant (achieved largely via the study of the Gram matrices), and constructive lower bounds (achieved largely via quadratic residues in finite fields and concepts from design theory). To provide an impression of the historical development of the subject, we have attempted to modernise many of the original proofs, while maintaining the underlying ideas. Thus some of the proofs have the flavour of determinant theory, and some appear in print in English for the first time.

We survey constructions of matrices in order $n \equiv 3 \mod 4$, giving asymptotic analysis which has not previously appeared in the literature. We prove that there exists an infinite family of matrices achieving at least $0.48$ of the maximal determinant bound. Previously the best known constant for a result of this type was $0.34$.

MSC: 05B20, 15B34
\end{abstract}
\bigskip


The story, of course, does not begin with Hadamard.
Thomson conjectured in 1885 a bound on the determinant of a matrix in terms of the norms of its rows; this was established shortly afterward by Muir.
In his \textit{R\'{e}solution d'une question relative aux d\'{e}terminants} of 1893,
Hadamard gives (i) a proof of the so-called \emph{Hadamard determinant bound} (which is essentially the Muir--Thomson bound), (ii) an explicit statement of the \textit{{m}aximal determinant
problem} (for $\mathbb{R}$), and
(iii) solutions to this problem at orders $2^{t}$, $12$ and $20$. Nevertheless, in Section \ref{secHad} we follow Hadamard's exposition
(his paper being as readable today as in 1893), before tracing a little of the history of the determinant bound before and after
Hadamard. Again following the original exposition, we give the proof of Fischer's {i}nequality using compound matrices, which generalises Hadamard. In Section \ref{SecMaxDet} we describe the (real) {m}aximal {d}eterminant problem, which is to construct $\{ \pm 1\}$ matrices
attaining the {d}eterminant {b}ound, and describe some results from the theory of Hadamard matrices.

In quick succession in the 1960s, Ehlich and Wojtas produced sharper bounds than Hadamard's for $\{ \pm 1\}$ matrices of orders
$n \equiv 1, 2, 3 \mod 4$. Their bounds are presented in Section \ref{SecEW}. Each of the three cases has its own peculiarities, discussed in
turn in Sections \ref{Sec1mod4}, \ref{Sec2mod4} and \ref{Sec3mod4} respectively. In each case, we survey the known constructions
which achieve a determinant within a constant factor of the best known bound, and comment on computational and theoretical work at small orders. In Section \ref{newbounds} we analyse the known theoretical constructions for matrices with large determinant at orders $n \equiv 3 \mod 4$. We generalise a construction of Neubauer and Radcliffe, allowing us to prove that there exists an infinite family of matrices exceeding $0.48$ of the Ehlich bound.

In writing this paper, the authors made the conscious decision to present the main results for the maximal determinant problem
with some historical context. Thus our presentation is approximately chronological, and we attempt to follow the techniques of the original
authors. These choices result in some heterogeneity of style: Hadamard worked with Hermitian matrices while Fischer worked with real symmetric matrices, for example, and we have not attempted to reconcile these accounts. We perceive two underlying themes which run through many
proofs in this area.
\begin{enumerate}
	\item The Gram matrix of a real-valued matrix is symmetric positive definite. All its eigenvalues are real and positive. Most of the
	determinant bounds that we present use linearity of the determinant in the rows of a matrix to express the determinant as the sum of a positive and a negative term. The positive term becomes an upper bound on the determinant, and minimising the negative term saturates the corresponding bound.
	Slightly intricate induction hypotheses appear to be a necessary feature of these proofs. Theorem \ref{Had} is the prototype of this result,
 and Theorems \ref{Wojtas} and \ref{2mod4} follow the same pattern, which reaches its most developed form in the results of Section \ref{Sec3mod4}.
	\item Going hand-in-hand with non-constructive upper bounds are constructive lower bounds. Outside the large literature on Hadamard matrices, there are only a few construction techniques. Direct constructions use combinatorial designs obtained from finite fields (specifically affine planes and quadratic residue designs): these techniques are introduced in Section \ref{SecPaley} and developed more extensively in Section \ref{RonanMagic}. Tensor products and block matrices assembled from these basic matrices produce further results: Proposition~\ref{FK} and Corollary \ref{2mod4} are easy examples of this method; Section \ref{RonanMagic} again supplies the most detailed applications.
	\end{enumerate}

\section{The Hadamard determinant bound}
\label{secHad}

A curiosity of Hadamard's paper to the eye of the modern reader is the absence of concepts from linear algebra.
For Hadamard, a matrix is nothing but an array from which the determinant (considered a homogeneous polynomial function of
degree $n$ in $n^{2}$ variables) is computed. Our proof follows Hadamard's, with notation modernised and what Hadamard
refers to
as an \textit{identit\'{e} bien connu} presented explicitly.

In this paper, matrices are square unless stated otherwise. We use $I_{n}$ and $J_{n}$ to denote the $n \times n$ identity and all-ones matrices respectively, and drop the subscript when the order is clear from context. Recall that a matrix is \textit{Hermitian} if $G^{\ast} = G$, and \textit{positive definite} if its eigenvalues are positive real numbers.
The \emph{Gram matrix} of $M$ is the matrix $MM^{\ast}$, which has as entries the inner products of rows of $M$.
A positive definite Hermitian matrix $G$ is a Gram matrix: via the square root of a positive matrix, it can be shown that
there exists a matrix~$X$ such that $XX^{\ast} = G$. Conversely, the Gram matrix of a set of linearly independent vectors is
Hermitian positive definite. There is a well-developed theory for positive definite matrices;
see for example the monograph of Horn and Johnson \cite{HornJohnson}. We follow Hadamard in considering a
\textit{minor} of order $k$ to be the determinant of a $k\times k$ submatrix.

\begin{theorem}[Paragraphes 2--4, \cite{Hadamard1893}]\label{Had}
Let $M$ be an $n \times n$ matrix with entries from the complex unit disk. Then $|\det(M)| \leq n^{n/2}$.
\end{theorem}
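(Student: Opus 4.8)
The plan is to reduce the problem to a statement about the Gram matrix $G = MM^{\ast}$. Since $\det(M^{\ast}) = \overline{\det(M)}$ and determinants are multiplicative, $\det(G) = |\det(M)|^{2}$, so it suffices to bound $\det(G)$. The matrix $G$ is Hermitian positive semidefinite, and its $i$-th diagonal entry is the squared Euclidean norm of the $i$-th row of $M$, namely $g_{ii} = \sum_{j}|m_{ij}|^{2} \leq n$, because every entry satisfies $|m_{ij}| \leq 1$. The theorem therefore follows from the inequality $\det(G) \leq \prod_{i=1}^{n} g_{ii}$, valid for every Hermitian positive semidefinite $G$: taking square roots and substituting $g_{ii} \leq n$ gives $|\det(M)| \leq n^{n/2}$.

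To prove $\det(G) \leq \prod_{i} g_{ii}$ I would induct on $n$, the base case $n=1$ being trivial. If $G$ is singular then $\det(G) = 0$ while the right-hand side is nonnegative, so we may assume $G$ is positive definite. Partition off the last row and column, writing $G$ with leading principal submatrix $G'$ of size $n-1$ (again Hermitian positive definite, hence invertible), off-diagonal block $b$, and corner entry $g_{nn}$. The classical bordered-determinant identity --- Hadamard's \emph{identit\'{e} bien connu}, in modern language the Schur-complement factorisation --- gives $\det(G) = \det(G')\bigl(g_{nn} - b^{\ast}(G')^{-1}b\bigr)$. Since $(G')^{-1}$ is positive definite, $b^{\ast}(G')^{-1}b \geq 0$, so the second factor is at most $g_{nn}$; combined with the inductive bound $\det(G') \leq \prod_{i<n} g_{ii}$ this yields the claim. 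A shortcut avoiding the induction: the eigenvalues $\lambda_{1},\dots,\lambda_{n}$ of $G$ are nonnegative with $\sum_{i}\lambda_{i} = \operatorname{tr}(G) = \sum_{i}g_{ii} \leq n^{2}$, and AM--GM gives $\det(G) = \prod_{i}\lambda_{i} \leq \bigl(\tfrac{1}{n}\sum_{i}\lambda_{i}\bigr)^{n} \leq n^{n}$.

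Because this is a survey following Hadamard, I would also spell out the sharpness discussion that motivates the rest of the paper. As a function of any single entry $m_{ij}$ with the others fixed, $\det(M)$ is affine-linear, so $|\det(M)|$ on the closed polydisk is maximised at a point where every $|m_{ij}| = 1$; and tracing equality through the argument above --- $g_{ii} = n$ for all $i$, and $b = 0$ at every stage of the recursion --- shows that $|\det(M)| = n^{n/2}$ forces the rows of $M$ to be pairwise orthogonal vectors of norm $\sqrt{n}$, that is $MM^{\ast} = nI_{n}$. This is precisely the Hadamard matrix condition taken up in Section~\ref{SecMaxDet}.

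I do not expect a genuine obstacle: the content is elementary once one has the right object to work with. The only points needing a little care are the degenerate case of linearly dependent rows, where the determinant vanishes, and --- if one wishes to follow Hadamard rather than invoke AM--GM --- arranging the induction so that the discarded quantity $b^{\ast}(G')^{-1}b$ is manifestly nonnegative. This is the prototype of the ``determinant $=$ positive term $-$ nonnegative term'' mechanism that the sharper bounds of Ehlich and Wojtas will later exploit with considerably more intricate induction hypotheses.
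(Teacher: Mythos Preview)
Your proposal is correct. Both the Schur-complement induction and the AM--GM shortcut establish the bound, and your discussion of the equality case matches the paper's.

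The paper's route differs in emphasis because it deliberately follows Hadamard's 1893 argument. Rather than invoking the Schur complement $\det(G) = \det(G')\bigl(g_{nn} - b^{\ast}(G')^{-1}b\bigr)$ and the positive definiteness of $(G')^{-1}$, the paper splits $P_{k} = g_{k,k}P_{k-1} + N_{k}$ by linearity (where $N_{k}$ is the determinant with the corner entry zeroed) and then proves $N_{\mathcal{I}} \leq 0$ for all index sets $\mathcal{I}$ by a separate induction, using a classical minor--cofactor identity relating the blocks of a matrix and its inverse. This is logically equivalent to your Schur-complement step --- indeed $N_{k} = -\det(G')\,b^{\ast}(G')^{-1}b$ --- but the paper presents it in Hadamard's determinant-theoretic language, which is the organising theme of the survey. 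Your argument is cleaner; theirs is faithful to the source and sets up the ``positive term minus non-negative term'' pattern reused in Theorems~\ref{Wojtas} and~\ref{2mod4}. Note too that your AM--GM shortcut, while sufficient for the headline bound, does not yield the intermediate inequality $\det(G) \leq \prod_{i} g_{ii}$ for positive semidefinite $G$; the paper needs that form explicitly (it is Equation~\eqref{HadDeriv}) in the proof of Fischer's inequality and in Lemma~\ref{lem1}.
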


\begin{proof}
Define $G = MM^{\ast}$, and recall that the $(i,j)$ entry of $G$, which we denote $g_{i,j}$, is
the inner product of rows $i$ and $j$ of $M$.
Since its diagonal entries are real and $g_{i,j} = g_{j,i}^{\ast}$, the matrix $G$ is \textit{Hermitian}. Furthermore, $\det(G) =
\det(M)\det(M^{\ast})$, being the product of a complex number and its conjugate, is real and non-negative.

For a subset $\mathcal{I}$ of $\{1, 2, \ldots, n\}$, denote by $G_{\mathcal{I}}$ the principal submatrix of $G$ with rows and columns
indexed by $\mathcal{I}$. We write $P_{\mathcal{I}}$ for $\det(G_{\mathcal{I}})$ and $N_{\mathcal{I}}$ for the determinant obtained upon setting the bottom-right entry of $G_{\mathcal{I}}$ to zero. If $\mathcal{I} = \{1, 2, \ldots, k\}$ then we write $P_{k}$ for $P_{\mathcal{I}}$ and $N_{k}$ for $N_{\mathcal{I}}$. Since the determinant is linear in the rows of the matrix,
\begin{equation}\label{QQ}
\arraycolsep1.5pt
\det
\begin{pmatrix}
g_{1,1} & \ldots & g_{1,k-1} & g_{1,k} \\
g_{2,1}  & \ldots &g_{2,k-1} & g_{2,k} \\
\vdots & \vdots & \vdots & \vdots \\
g_{k-1,1}  & \ldots &g_{k-1,k-1} & g_{k-1,k} \\
g_{k,1} &  \ldots &g_{k,k-1} & g_{k,k}
\end{pmatrix} = \det
\begin{pmatrix}
g_{1,1} & \ldots & g_{1,k-1} & g_{1,k} \\
g_{2,1}  & \ldots &g_{2,k-1} & g_{2,k} \\
\vdots & \vdots & \vdots & \vdots \\
g_{k-1,1}  & \ldots &g_{k-1,k-1} & g_{k-1,k} \\
0 &  \ldots & 0 & g_{k,k}
\end{pmatrix} + \det
\begin{pmatrix}
g_{1,1} & \ldots & g_{1,k-1} & g_{1,k} \\
g_{2,1}  & \ldots &g_{2,k-1} & g_{2,k} \\
\vdots & \vdots & \vdots & \vdots \\
g_{k-1,1}  & \ldots &g_{k-1,k-1} & g_{k-1,k} \\
g_{k,1} &  \ldots &g_{k,k-1} & 0
\end{pmatrix}\!.
\end{equation}

Equation \eqref{QQ} illustrates the Laplace expansion of the determinant of $G_{k}$. We gather all terms containing $g_{k,k}$ and see that
\begin{equation}\label{recursive}
P_{k} = g_{k,k}P_{k-1} + N_{k} .
\end{equation}

By induction on $|\mathcal{I}|$ we will establish that $N_{\mathcal{I}}$ is always non-positive.
For this we require a general determinantal identity.
Let $U_{1}$ and $U_{4}$ be invertible square matrices of size $k \times k$ and $(n-k) \times (n-k)$ respectively.
For any $U_{2}$ and $U_{3}$ such that the displayed matrix $U$ is invertible, set $V = U^{-1}$, and decompose into blocks as in $U$:
\[ U = \begin{pmatrix}
U_{1} & U_{2} \\ U_{3} & U_{4}
\end{pmatrix}\!,
\quad
V =
\begin{pmatrix}
V_{1} & V_{2} \\ V_{3} & V_{4}
\end{pmatrix}\!.
\]
Now, take determinants on both sides of the expression
\begin{equation}\label{SchurComplement}
\begin{pmatrix}
U_{1} & U_{2} \\ U_{3} & U_{4}
\end{pmatrix}
\begin{pmatrix}
V_{1} & 0 \\ V_{3} & I
\end{pmatrix}
= \begin{pmatrix}
I & U_{2} \\ 0 & U_{4} \end{pmatrix}
\end{equation}
to see that
\begin{equation}\label{detid}
\det(U) \det(V_{1}) = \det(U_{4}).
\end{equation}

We return to our inductive proof. Suppose that $\mathcal{I} = \{i,j\}$. Recalling that $g_{i,j} = g_{j,i}^{\ast}$ because $G$ is
Hermitian,
\[ N_{\mathcal{I}} = \det
\begin{pmatrix}
g_{i,i} & g_{i,j} \\ g_{j,i} & 0
\end{pmatrix} = - |g_{i,j}|^{2},\]
and the result is established for the base case $|\mathcal{I}| = 2$. Suppose now that the inductive hypothesis holds for all~$\mathcal{I}$
for which $|\mathcal{I}| \leq k-1$. For notational convenience, we will work with the set $\{1, 2, \ldots, k\}$, but $\mathcal{I}$ can be
taken to be arbitrary of size $k$.
Take $V$ to be the rightmost matrix displayed in Equation \eqref{QQ}, so that $\det(V) = N_{k}$. If $\det(N_{k}) = 0$ the induction hypothesis
holds, so suppose that $N_{k}$ is invertible. Let $V_{1}$ be $G_{k-2}$, which is the submatrix of
$G_{k}$ containing the first $k-2$ rows and columns.
 The entries of $U = V^{-1}$ are the $(k-1) \times (k-1)$ cofactors of $V$. So $\det(U) = N_{k}^{-1}$ and $\det(V_{1}) =
 P_{k-2}$.
We denote by $\gamma$ the (non-principal) minor obtained by deleting row $k-1$ and column $k$ of $V$. Then up to some $(-1)$ factors which
cancel in the determinant,
\[
\det(U_{4}) = \det\begin{pmatrix} P_{k-1} &\gamma \\ \gamma^{\ast} & N_{\{1, \ldots, k-2, k\}} \end{pmatrix} =
P_{k-1}N_{\{1, \ldots, k-2, k\}} - |\gamma|^{2}.\]

Applying Equation \eqref{detid}, we obtain
\[ N_{k}^{-1}P_{k-2} = P_{k-1}N_{\{1, \ldots, k-2, k\}} - |\gamma|^{2}.\]
The terms $P_{k-2}$ and $P_{k-1}$ are determinants of Gram matrices and hence non-negative.
By the inductive hypothesis, $N_{\{1, \ldots, k-2, k\}} \leq 0$ so the right-hand side is non-positive. The signs of $N_{k}^{-1}$ and
$N_{k}$ agree, and so $N_{k}$ is non-positive and the result is established by induction.

Since the $g_{k,k}$ are real and positive, and the $N_{k}$ are non-positive,
Equation \eqref{recursive} now shows that $P_{k} \leq \prod_{i=1}^{k}g_{i,i}$.
By hypothesis, all entries in $M$ have modulus bounded by $1$ so each term
in the product satisfies $|g_{k,k}| \leq n$ and $\det(G) \leq n^{n}$.
Finally, $|\det(M)| \leq {n}^{n/2}$ and the proof is complete.
\end{proof}

Equality holds in the identity $P_{k} = g_{k,k}P_{k-1} + N_{k}$ if and only if $\det(N_{k})$ vanishes.
Since $N_{k}$ contains a positive definite minor $P_{k-1}$, this occurs if and only if the final column of $N_{k}$
is identically zero. Applying this observation repeatedly, equality in Theorem \ref{Had} holds if and only if all of
the minors $N_{k}$ vanish, which forces $MM^{\ast}$ to be diagonal.

The most substantial part of Hadamard's proof is devoted to establishing that the determinant of a
symmetric positive definite matrix is bounded by the product of its diagonal elements. This more general result
was conjectured by William Thomson (later Lord Kelvin) in 1885. As recounted by Maritz in his masterly
mathematical biography of Thomas Muir \cite{Maritz}, the result was established by Muir shortly afterward.
For reasons never elaborated upon, Muir's publication was delayed until 1901. Even then, the
result is established only for $4 \times 4$ matrices, with the claim that the proof extended easily to
larger dimensions\footnote{Muir returned to this topic in 1910, beginning his paper with the claim that Hadamard's result is
\textit{neither short nor simple, the method being that known as ``mathematical induction''}.
It concludes twelve pages later, having considered the $3 \times 3$ and $4 \times 4$ cases extensively
with a postscript containing yet another proof of the inequality, this time featuring the minors of order
$2$ of a $4\times 4$ matrix, claimed by Muir to be \textit{of the most direct and simple character}
\cite{Muir1910}. Hadamard's proof is preferable, at least to the authors.}. In 1899, Fredholm \cite{Fredholm} also established Thomson's conjecture, but
acknowledged in 1900 that this result was a direct consequence of Theorem \ref{Had}.

In the form of a bound on the determinant of a symmetric positive definite matrix, Hadamard's result gained
importance due to connections to Fredholm's theory of differential equations, with a new proof by Wirtinger
in~1907 \cite{Wirtinger} and a generalisation by Fischer in 1908 \cite{Fischer}. In fact, we shall have use for Fischer's
{i}nequality in later sections of this paper, and so provide a proof modeled closely on the original. Both results in this section are easily established using techniques of positive definite matrices. Bechenbach and Bellman claim that there are \emph{perhaps a hundred proofs} of the Hadamard inequality \cite{BeckBell}; a one-line proof is given on page 505 of Horn and Johnson \cite{HornJohnson}.

\begin{theorem}[Satz III, \cite{Fischer}]\label{Fischer}
Suppose that $G$ is positive definite and symmetric, and that
\[ G =
\begin{pmatrix} A^{\phantom{T}} & B \\ B^{\ast} & D
\end{pmatrix}\!,
 \]
where $A$ and $D$ are square submatrices. Then $\det(G) \leq \det(A)\det(D)$ with equality if and only if $B = \textbf{0}$.
\end{theorem}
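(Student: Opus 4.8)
The plan is to reduce the inequality to the statement that the determinant of a positive definite matrix cannot decrease under addition of a positive semidefinite matrix. Since $A$ is a principal submatrix of the positive definite matrix $G$, it is itself positive definite and hence invertible, so by direct multiplication
\[
G = \begin{pmatrix} I & \mathbf{0} \\ B^{\ast}A^{-1} & I \end{pmatrix}
\begin{pmatrix} A & \mathbf{0} \\ \mathbf{0} & S \end{pmatrix}
\begin{pmatrix} I & A^{-1}B \\ \mathbf{0} & I \end{pmatrix},
\qquad S := D - B^{\ast}A^{-1}B,
\]
where the left factor is the conjugate transpose of the right (using $A^{\ast}=A$). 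Taking determinants gives $\det(G)=\det(A)\det(S)$; the same relation can alternatively be extracted from the determinantal identity \eqref{detid} applied with $U=G^{-1}$, should a derivation closer to the proof of Theorem~\ref{Had} be preferred.

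Next I would record two properties of the Schur complement $S$. Reading the displayed factorisation as $\mathrm{diag}(A,S)=P^{\ast}GP$, with $P$ the inverse of the (unit upper-triangular) right factor, shows that $\mathrm{diag}(A,S)$ is congruent to $G$ and therefore positive definite, so $S$ is positive definite. And $B^{\ast}A^{-1}B$ is positive semidefinite, since $A^{-1}$ is positive definite and $v^{\ast}B^{\ast}A^{-1}Bv=(Bv)^{\ast}A^{-1}(Bv)\ge 0$ for every $v$. Thus $D=S+B^{\ast}A^{-1}B$ is a positive definite matrix plus a positive semidefinite one.

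The one ingredient that does real work is the lemma: if $S$ is positive definite and $T$ is positive semidefinite then $\det(S+T)\ge\det(S)$, with equality if and only if $T=\mathbf{0}$. I would prove it by writing $S=RR^{\ast}$ with $R$ invertible and computing
\[
\det(S+T)=\det(R)\det\!\left(I+R^{-1}T(R^{-1})^{\ast}\right)\det(R^{\ast})=\det(S)\prod_{i}(1+\mu_i),
\]
where the $\mu_i\ge 0$ are the eigenvalues of the Hermitian positive semidefinite matrix $R^{-1}T(R^{-1})^{\ast}$; the product is at least $1$, and equals $1$ exactly when every $\mu_i$ vanishes, i.e. when $R^{-1}T(R^{-1})^{\ast}=\mathbf{0}$, i.e. when $T=\mathbf{0}$. (Equivalently one adds the rank-one summands of $T$ one at a time and uses the matrix determinant lemma $\det(I+uv^{\ast})=1+v^{\ast}u$.) This is the only step where I expect care is genuinely needed, namely in making the equality clause sharp.

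Finally, applying the lemma to $S$ and $T=B^{\ast}A^{-1}B$ yields $\det(G)=\det(A)\det(S)\le\det(A)\det(D)$. In the equality case $B^{\ast}A^{-1}B=\mathbf{0}$; writing this as $(A^{-1/2}B)^{\ast}(A^{-1/2}B)$ and using invertibility of $A^{-1/2}$ gives $B=\mathbf{0}$, and conversely $B=\mathbf{0}$ makes $G$ block diagonal so $\det(G)=\det(A)\det(D)$. It is worth remarking why the obvious shortcut fails: equation \eqref{recursive} in the proof of Theorem~\ref{Had} immediately yields the case of Fischer in which $D$ is $1\times 1$, but iterating this bounds $\det(G)$ only by $\det(A)$ times the diagonal entries of $D$, which is weaker than $\det(A)\det(D)$. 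Hence the simultaneous Schur-complement reduction, rather than a one-index-at-a-time induction, appears to be essential.
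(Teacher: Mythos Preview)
Your argument is correct. The Schur-complement factorisation, the lemma $\det(S+T)\ge\det(S)$ for $S$ positive definite and $T$ positive semidefinite, and the equality analysis via $(A^{-1/2}B)^{\ast}(A^{-1/2}B)=\mathbf{0}$ are all sound.

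Your route is, however, quite different from the paper's. The paper deliberately reproduces Fischer's 1908 proof via the $k^{\textrm{th}}$ \emph{compound} $G^{(k)}$: one writes $G^{(k)}=\left(\begin{smallmatrix}F&f\\ f^{\ast}&\det(A)\end{smallmatrix}\right)$, evaluates $\det(F)$ using the Sylvester--Franke theorem and Jacobi's adjugate formula, observes via Cauchy--Binet that $G^{(k)}$ is itself positive definite, and then applies the one-row Hadamard bound $\det(G^{(k)})\le\det(F)\det(A)$ to obtain the inequality after cancelling a common power of $\det(G)$. The equality case is read off from the vanishing of the minor vector $f$. Your approach is the modern textbook one: a single block-$LDL^{\ast}$ step plus an eigenvalue inequality, bypassing compound matrices entirely. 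It is shorter, more self-contained, and gives the equality clause with no extra work; the paper's version is chosen for historical fidelity and to illustrate the determinant-theoretic machinery (Sylvester--Franke, Cauchy--Binet) that pervades the survey. Your closing remark that iterating the one-row bound of Theorem~\ref{Had} only yields $\det(G)\le\det(A)\prod_i d_{ii}$, which is weaker, is exactly the point: Fischer's contribution was to lift that one-row step to a full block via the compound, whereas you achieve the same lift via the Schur complement.
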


\begin{proof}
To fix notation, let $A$ be $k \times k$ and $D$ be $(n-k) \times (n-k)$. We will follow Fischer's proof,
which involves the $k^{\textrm{th}}$ \textit{compound} of $G$. This is the matrix with rows and columns
indexed by the distinct $k$-subsets of $\{1, \ldots, n\}$ with the entry in row $X$ and
column $Y$ the minor $G_{X, Y}$ of $G$
with rows labelled by $X$ and columns labelled by $Y$. We denote the $k^{\textrm{th}}$ compound of a matrix $M$ by $M^{(k)}$. The following results on compounds would have been well known to Fischer's contemporaries (for further discussion, see, for example, Section 0.8 of \cite{HornJohnson}):
\begin{enumerate}[noitemsep,topsep=3pt]
	\item The Sylvester--Franke theorem: $\det(M^{(k)}) = \det(M)^{\binom{n-1}{k-1}}$.
	\item Jacobi's formula for the $k^{\textrm{th}}$ adjugate: $\textrm{Adj}(M^{(k)})_{X, Y}$ is $(-1)^{\sigma(X, Y)} M_{\overline{X}, \overline{Y}}$, where $M_{\overline{X}, \overline{Y}}$ is the complementary minor of $M_{X, Y}$ and $\sigma(X, Y) = \sum_{x \in X} x + \sum_{y \in Y} y$. The $k^{\textrm{th}}$ adjugate satisfies the relation $\textrm{Adj}(M^{(k)})M^{(k)} = \det(M) I_{\binom{n}{k}}$.
	\item The ({g}eneralised) Cauchy--Binet {f}ormula: $(M_{1}M_{2})^{(k)} = M_{1}^{(k)} M_{2}^{(k)}$.
\end{enumerate}

Fischer establishes a Hadamard-type bound for positive definite matrices. In the notation of Theorem \ref{Had},
\begin{equation}\label{HadDeriv} \det(G) \leq P_{n-1} g_{n,n} .\end{equation}
This result is immediate from the proof of Theorem \ref{Had}, which also shows that the bound is
attained precisely when $g_{i,n} = 0$ for $1 \leq i \leq n-1$.

Next, Fischer decomposes the $k^{\textrm{th}}$ compound as
\[ G^{(k)} =
\begin{pmatrix}
F & f \\ f^{\ast} & \det(A)
\end{pmatrix}\!,\]
where $F$ is square of order $\binom{n}{k}-1$, and $f$ is a column vector. We evaluate $\det(F)$ via the method of
Equation~\eqref{SchurComplement}. Set $U = G^{(k)}$ and $U_{1} = F$. By
the Sylvester--Franke theorem, $\det(U) = \det(G)^{\binom{n-1}{k-1}}$. By
Jacobi's formula for the adjugate, $V_{4}$ is proportional to the minor of $G$ complementary to $A$, namely $V_{4} = \det(G)^{-1}\det(D)$.
Hence $\det(F) = \det(G)^{\binom{n-1}{k-1}-1} \det(D)$.

By hypothesis, $G$ is symmetric positive definite, so $G = MM^{\ast}$ for some matrix $M$. By the Cauchy--Binet
formula,
$M^{(k)}(M^{(k)})^{\ast} = G^{(k)}$ and hence $G^{(k)}$ is positive definite. So we may apply
Equation \eqref{HadDeriv} to $G^{(k)}$ to obtain
\[ \det(G^{(k)}) = \det(G)^{\binom{n-1}{k-1}} \leq  \det(A) \det(F) = \det(A)\det(D)\det(G)^{\binom{n-1}{k-1}-1}\]
from which Fischer's {i}nequality follows by cancelling the common factor of $\det(G)^{\binom{n-1}{k-1}-1}$.

If $\det(A) = 0$ then $\det(G)  = 0$ and Fischer's {i}nequality holds trivially, so suppose that $A$ has full rank. The entries of $f$ are minors of $G$ in which the columns of $A$ are held fixed and the rows vary: these are precisely the minors
with rows drawn from $A$ and $B^{\ast}$. For a fixed row $b_{r}$ of $B^{\ast}$ consider the minors consisting of $k-1$ rows of $A$ and $b_{r}$.
All of these minors vanish if and only if $b_{r} = \textbf{0}$. But equality holds in Equation \ref{HadDeriv} precisely when all entries of the vector $f$ are zero; hence $B^{\ast}$ (and $B$) are zero matrices.
\end{proof}

In the original paper, Fischer characterises the cases of equality in Theorem \ref{Fischer} via an argument similar to Hadamard's demonstration that the minors $N_{k}$ are non-positive. We substitute a slightly more direct (if anachronistic) proof. Fischer also provides a direct proof of Equation \eqref{HadDeriv}, so his theorem gives an independent proof of Theorem \ref{Had}. To see this, apply Theorem \ref{Fischer} recursively to the Gram matrix $G = MM^{\ast}$ until $1 \times 1$ blocks on the diagonal are obtained. Then the determinant of $G$ is bounded by the product of its diagonal entries, and the last sentence of the proof of Theorem \ref{Had} completes the proof.

\section{Hadamard matrices and the maximal determinant problem}
\label{SecMaxDet}

Let $G$ be a symmetric positive definite matrix. As we have seen, the key step in Hadamard's proof of Theorem~\ref{Had} is establishing
the bound $\det(G) \leq \prod_{i=1}^{n} g_{i,i}$.
From Hadamard (but more explicitly from Fischer), one sees that that this bound is met with equality precisely  when $G$ is diagonal. When $G =
MM^{\ast}$ is a Gram matrix, we see that the maximal determinant is obtained precisely when the rows of $M$ are orthogonal. Geometrically,
the volume of a parallelopiped with fixed edge lengths is maximised when the edges are orthogonal. This geometric approach was used by Craigen \cite{CraigenICA} to establish Hadamard's inequality directly from Pythagoras.
There is no existence question to consider here: orthogonal matrices are plentiful and rows can be renormalised at will.
As noted already by Sylvester \cite{Sylvester}, the discrete Fourier transform matrices furnish examples which saturate Hadamard's
determinant bound in any dimension over the complex field. In contrast, there is a non-trivial existence theory for matrices saturating Hadamard's determinant bound over $\mathbb{R}$, which we consider in this section.

Suppose now that $M$ is a real-valued $n \times n$ matrix of maximal determinant with entries of norm at most $1$. Since the determinant is a linear function of the matrix entry $M_{i,j}$, without loss of generality, the entries can be chosen from $\{\pm 1\}$. The remainder of this survey is devoted to the following problem, originally suggested as a topic for investigation by Hadamard.

\begin{mdprob}
What is the maximal determinant of an $n \times n$ matrix with entries in $\{\pm 1\}$?
\end{mdprob}

Initial progress on this problem was made by Hadamard, who established the following result.

\begin{proposition}\label{Hadbasics}
Suppose that $H$ is a real matrix saturating the determinant bound. Then:
\begin{enumerate}[noitemsep,topsep=3pt]
	\item All entries of $H$ belong to $\{ \pm 1\}$.
	\item The rows and columns of $H$ are orthogonal.
	\item The order of $H$ is $1$, $2$ or a multiple of $4$.
\end{enumerate}
\end{proposition}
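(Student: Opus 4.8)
The plan is to treat the three parts in order, each building on the previous. For part (1), I would invoke the discussion immediately following the proof of Theorem~\ref{Had}: equality in the determinant bound forces each diagonal entry $g_{i,i} = n$, and since $g_{i,i}$ is the squared norm of row $i$ whose entries all have modulus at most $1$, every entry of that row must have modulus exactly $1$. For a real matrix this means every entry lies in $\{\pm 1\}$. For part (2), I would again use the equality analysis from the remark after Theorem~\ref{Had}: equality holds if and only if all the minors $N_k$ vanish, which was shown to force $MM^{\ast}$ to be diagonal; combined with $g_{i,i} = n$ this gives $HH^{\mathsf{T}} = nI_n$. Taking determinants confirms consistency, and since $H$ is square and invertible, $HH^{\mathsf{T}} = nI_n$ implies $H^{\mathsf{T}}H = nI_n$ as well, so the columns are orthogonal too.

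For part (3), the heart of the matter, I would argue as follows. Assume $n \geq 3$ and pick any three distinct rows $r_1, r_2, r_3$ of $H$. Form the vector $v = r_1 + r_2 + r_3$ and also consider sums with sign changes; the cleaner route is to normalise so that $r_1$ is the all-ones vector (negating columns as needed does not affect orthogonality or the $\{\pm 1\}$ property), and then observe that for each column index $j$, the triple of entries $(H_{1,j}, H_{2,j}, H_{3,j}) = (1, \pm 1, \pm 1)$. The inner product conditions $\langle r_1, r_2\rangle = \langle r_1, r_3\rangle = \langle r_2, r_3 \rangle = 0$ translate into counting conditions on how many columns fall into each of the four sign patterns $(1,1,1), (1,1,-1), (1,-1,1), (1,-1,-1)$. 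Writing $a, b, c, d$ for these counts, orthogonality gives three linear equations, and solving them shows $a = b = c = d = n/4$, whence $4 \mid n$. The case $n = 2$ is handled directly by the matrix $\left(\begin{smallmatrix} 1 & 1 \\ 1 & -1 \end{smallmatrix}\right)$, and $n = 1$ trivially.

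The main obstacle is purely bookkeeping in part (3): one must be careful that reducing to the case where one row is all-ones is legitimate (it is, because column negations are signed permutations preserving all relevant structure), and that the three orthogonality equations among $\{a,b,c,d\}$ together with $a+b+c+d = n$ indeed pin down all four values rather than leaving a free parameter. I expect the linear system to be
\[
a + b - c - d = 0, \quad a - b + c - d = 0, \quad a - b - c + d = 0,
\]
which together force $b = c = d$ and then $a = b$, giving $n = 4a$. No deeper idea is needed; the only subtlety is organising the sign-pattern count cleanly.
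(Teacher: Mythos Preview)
Your proposal is correct and follows essentially the same route as the paper: parts (1) and (2) are deduced from the equality case $HH^{\top}=nI_n$ of Theorem~\ref{Had}, and part (3) is exactly the column sign-pattern count on three normalised rows leading to the same linear system $a+b-c-d=0$, $a-b+c-d=0$, $a-b-c+d=0$ and hence $a=b=c=d=n/4$. The only cosmetic difference is that the paper first isolates the two-row case to show $n$ is even before passing to three rows, whereas you go straight to three rows; your version is a mild streamlining of the same argument.
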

\begin{proof}
The first two claims follow directly from Hadamard's observation that the bound is saturated if and only if $HH^{\top} = nI_{n}$.
For the last claim, observe that the matrices
\[ (1) \quad\textrm{and}\quad \begin{pmatrix} 1 & \phantom{-}1 \\ 1 & -1 \end{pmatrix}\]
saturate the bound in dimensions $1$ and $2$. Suppose that $H$ has dimension $n \geq 3$. Since the magnitude of the determinant
is invariant under permutation and negation of rows and columns, we may assume that the first row of $H$ has all entries positive.
Orthogonality then forces an equal number of positive and negative entries in the second row. Hence $n$ is even.

The proof that $n$ is divisible by $4$ is only slightly more involved. Consider permuting the columns of $H$ so that the first three rows are in the form
\[ \begin{pmatrix} 1_{a} & \phantom{-}1_{b} & \phantom{-}1_{c} & \phantom{-}1_{d} \\ 1_{a} & \phantom{-}1_{b} & -1_{c} & -1_{d} \\ 1_{a} & -1_{b} & \phantom{-}1_{c} &
-1_{d}\end{pmatrix}\!,\]
where $1_{x}$ denotes an all-ones vector of length $x$. Orthogonality of rows forces the equations
\[ a + b - c - d = 0,\quad a - b + c - d = 0,\quad a - b - c + d = 0.\]
These equations are solved precisely when $a = b = c = d$ and hence the dimension is a multiple of $4$.
\end{proof}

Matrices meeting the determinant bound with equality have become known as \textit{Hadamard matrices}. There is a
substantial literature devoted to Hadamard matrices; we refer the reader to three monographs which have appeared in the past 15 years for
further details, \cite{deLauneyFlannery, HoradamHadamard, Seberry17}. Existence of Hadamard matrices is well-studied. The
following omnibus result provides references to some well-known constructions of Hadamard matrices.

\begin{proposition}\label{Hadexist}
Hadamard matrices exist at the following orders.
\begin{enumerate}[noitemsep,topsep=3pt]
	\item $2^{t}$ for $t \geq 0$ {\upshape\cite{Sylvester}}.
	\item $p^{a} + 1$ where $p$ is prime and $p^{a} \equiv 3 \mod 4$ {\upshape\cite{Paley1933}}.
	\item $2(p^{a} + 1)$ where $p$ is prime and $p^{a} \equiv 1 \mod 4$ {\upshape\cite{Paley1933}}.
	\item $p(p+2) + 1$ where $p$ and $p+2$ are twin primes {\upshape\cite{StantonSprott}}.
	\item $4p^{4t}$ where $p$ is prime and $t \geq 1$ {\upshape\cite{XiaMenonDiffSets}}
	\item $4t$ for all values of $t \leq 250$ except for $t \in \{167, 179, 223\}$ {\upshape\cite{Kharaghani428}}.
	\item $n = ab/2$ or $n = abcd/16$ where $a, b, c, d$ are orders of Hadamard matrices {\upshape\cite{CraigenSeberryZhang,SeberryYamadaProducts}}.
	\item There exist constants $\alpha$ and $\beta$ such that, if $t$ is an odd positive integer, then there exists
a Hadamard matrix of order $2^{\lceil \alpha + \beta \log_{2}(t)\rceil }t$; see {\upshape\cite{Craigen, Seberry17}}.
\end{enumerate}
\end{proposition}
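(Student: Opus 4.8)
The plan is to verify the eight items one at a time: there is no single unified argument, and the proof is inevitably a patchwork, with the elementary constructions developed later in this survey and the deeper ones inherited from the literature. I would proceed in the order listed.

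Item~(1) is Sylvester's construction: take $H_{2^{t}}$ to be the $t$-fold Kronecker product of the order-$2$ Hadamard matrix with itself, and invoke Proposition~\ref{FK}, that a Kronecker product of Hadamard matrices is again Hadamard. Items~(2) and~(3) are the two Paley constructions, assembled in Section~\ref{SecPaley} from the quadratic residue character $\chi$ of a finite field $\mathbb{F}_{q}$. When $q = p^{a} \equiv 3 \bmod 4$ one has $\chi(-1) = -1$, so the bordered matrix built from $[\chi(i-j)]$ is a skew conference matrix $C$ of order $q+1$ with $C C^{\top} = q I$ and $C^{\top} = -C$; then $(C+I)(C+I)^{\top} = (q+1)I$, so $C+I$ is Hadamard of order $q+1$. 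When $q \equiv 1 \bmod 4$ the conference matrix is symmetric, and a $2 \times 2$ block substitution doubles it to a Hadamard matrix of order $2(q+1)$. Items~(4) and~(5) pass through difference sets: the twin prime power construction of \cite{StantonSprott} yields a Paley-type difference set in a group of order $p(p+2)$, and Xia's construction yields Menon (regular Hadamard) difference sets of order $4p^{4t}$; in each case the $\{\pm 1\}$ version of the incidence matrix of the associated symmetric design, after suitable bordering or sign normalisation, is Hadamard. Item~(7) records two multiplication theorems — for $n = ab/2$ a Kronecker-type product in which one factor plays the role of ``half'' a Hadamard matrix, and for $n = abcd/16$ a four-fold analogue — both established in the cited references via orthogonal designs and signed group Hadamard matrices.

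The remaining items, (6) and (8), are of a different nature, and here I would simply cite. Statement~(6) is a computational census — each order $4t$ with $t \le 250$, bar the three listed exceptions, is certified by an explicitly exhibited matrix — and reproducing those computations is neither feasible nor appropriate in a survey. Statement~(8) is the asymptotic existence theorem, whose proof relies on Craigen's signed-group machinery and its later refinements, which lie well outside the determinant-theoretic scope of the present work. The main obstacle is therefore not a single difficult lemma but the heterogeneity of the list: the genuinely original content is to make the constructions behind (1)--(3) and the product construction behind (7) completely explicit, which is carried out in Section~\ref{SecPaley} and Section~\ref{RonanMagic}; the deep inputs (6) and (8) cannot realistically be re-derived here and must be taken on the authority of the cited papers.
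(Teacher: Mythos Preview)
The paper gives \emph{no proof} of this proposition: it is stated as an ``omnibus result'' whose purpose is to collect references, and the text moves on immediately afterward. So there is nothing to compare your proposal against --- you have sketched a proof where the authors deliberately offered none.

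That said, your sketch contains some internal mis-references worth correcting. Proposition~\ref{FK} is the Farmakis--Kounias border construction, not a statement about Kronecker products of Hadamard matrices; the fact that a tensor product of Hadamard matrices is Hadamard is not stated as a numbered result anywhere in the paper. Section~\ref{SecPaley} carries out only the Paley type~I construction (your item~(2)); the Paley type~II construction (item~(3)) is mentioned there only in passing, and item~(1) is not treated at all. Section~\ref{RonanMagic} has nothing to do with the product constructions of item~(7): it builds Barba-optimal matrices of order $2q^{2}+2q+1$, not Hadamard matrices of order $ab/2$ or $abcd/16$. So your claim that ``the genuinely original content \ldots\ is carried out in Section~\ref{SecPaley} and Section~\ref{RonanMagic}'' overstates what the paper actually does for this proposition. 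In short: your plan is reasonable as a standalone proof, but it is not what the paper does, and the internal cross-references you rely on do not support the items you attach them to.
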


As demonstrated, the maximal determinant problem for $n \equiv 0 \mod 4$ is extensive.
Paley conjectured in the 1930s that the bound is attained in every dimension divisible by $4$.
We note that Hadamard matrices have found application in the construction of error-correcting codes,
experimental designs and more recently in the design of quantum algorithms. The reader is referred to the
monographs of Horadam \cite{HoradamHadamard} and Bengtsson and Zyczkowski \cite{KarolBengtsson} for further details.

\subsection{Finite fields, quadratic residues and the Paley construction}\label{SecPaley}

The guiding principle in the assembly of this survey was to produce a self-contained reference on the maximal determinant problem.
Upper bounds are only half of this story. To establish that the bounds are optimal, infinite families of matrices achieving these bounds
are
required. As illustrated in Proposition \ref{Hadexist}, there are many constructions for Hadamard matrices. We shall see in Section
\ref{Sec1mod4}
that there are just two known constructions for infinite families of matrices when $n \equiv 1, 2 \mod 4$ saturating the relevant determinant bounds. All of these constructions rely on properties of
quadratic residues in
finite fields. We will assume the following results about finite fields, proofs of which can be found in a standard
textbook on abstract
algebra, e.g., \cite{IsaacsAlgebra}.

\begin{enumerate}[noitemsep,topsep=3pt]
	\item For each odd prime power $q$ there exists a finite field with $q$ elements, unique up to isomorphism. We denote this field by $\mathbb{F}_{q}$.
	\item The multiplicative group of $\mathbb{F}_{q}$ is cyclic of order $q-1$.
	\item An element $x \in \mathbb{F}_{q}$ is a \textit{quadratic residue} if there exists $y \in \mathbb{F}_{q}$ such that $y^{2} =
x$.
	Otherwise, $x$ is a \textit{quadratic non-residue}. The function $\chi: \mathbb{F}_{q} \rightarrow \mathbb{C}$ given by $\chi(0)  =
0$, $\chi(x) = 1$ if $x$ is a quadratic residue and $\chi(x) = -1$ otherwise is a \textit{multiplicative character} of $\mathbb{F}_{q}$
and $\chi(x) = x^{\frac{q-1}{2}}$. Hence the number of non-zero quadratic residues is $\frac{q-1}{2}$.
		\item It follows that $\chi(-1) = (-1)^{\frac{q-1}{2}}$, so $-1$ is a quadratic residue if and only if $q \equiv 1 \mod 4$.
\end{enumerate}

The matrices constructed in Proposition \ref{paleycore} and their variants are frequently useful in the construction of maximal determinant matrices,
and also occur in multiple other contexts.

\begin{proposition} \label{paleycore}
Suppose that $p$ is an odd prime number and $\chi$
is the quadratic character of $\mathbb{F}_{p}$.
We define $\chi(0) = 0$. Then the \textit{Paley core} matrix
\[ Q = (  \chi(x-y))_{0 \leq x,y \leq p-1} \]
has zeroes on the diagonal and off-diagonal entries in $\{\pm 1\}$.
Further, $Q$ is circulant and satisfies $QQ^{\top} = pI - J$.
\end{proposition}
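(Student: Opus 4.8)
The plan is to verify each asserted property of $Q = (\chi(x-y))_{0 \le x,y \le p-1}$ in turn, with the only substantial work being the identity $QQ^{\top} = pI - J$. The diagonal entries are $\chi(x-x) = \chi(0) = 0$ by definition, and for $x \ne y$ the difference $x-y$ is a nonzero element of $\mathbb{F}_{p}$, so $\chi(x-y) \in \{\pm 1\}$; this gives the first claim. For the circulant property, observe that the $(x,y)$ entry depends only on $x - y \bmod p$, so each row is obtained from the previous by a cyclic shift; equivalently, $Q$ is the $\mathbb{F}_{p}$-indexed matrix with $(x,y)$ entry $f(x-y)$ for $f = \chi$, which is by definition circulant.

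The heart of the matter is $QQ^{\top} = pI - J$. Since $\chi$ takes real values, $Q^{\top}$ has $(x,y)$ entry $\chi(y-x)$, and the $(x,z)$ entry of $QQ^{\top}$ is $\sum_{y \in \mathbb{F}_{p}} \chi(x-y)\chi(z-y)$. When $x = z$ this sum is $\sum_{y} \chi(x-y)^{2} = \#\{y : y \ne x\} = p-1$, since $\chi(t)^{2} = 1$ for $t \ne 0$ and $\chi(0)^2 = 0$; this matches the diagonal of $pI - J$. When $x \ne z$, substitute $t = x - y$, so the sum becomes $\sum_{t \in \mathbb{F}_{p}} \chi(t)\chi(t + (z-x))$. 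Writing $d = z - x \ne 0$ and using multiplicativity of $\chi$, the terms $t = 0$ and $t = -d$ contribute zero, and for the remaining $t$ we factor $\chi(t)\chi(t+d) = \chi(t^{2})\chi(1 + d t^{-1}) = \chi(1 + d t^{-1})$. As $t$ ranges over $\mathbb{F}_{p}^{\times}\setminus\{-d\}$, the quantity $u = 1 + dt^{-1}$ ranges over $\mathbb{F}_{p} \setminus \{1, 0\}$ (bijectively, since $t \mapsto 1 + dt^{-1}$ is injective on $\mathbb{F}_p^\times$, misses $1$, and hits $0$ exactly at $t=-d$). Hence the sum equals $\sum_{u \in \mathbb{F}_{p}} \chi(u) - \chi(0) - \chi(1) = 0 - 0 - 1 = -1$, using that $\sum_{u \in \mathbb{F}_{p}} \chi(u) = 0$ because the quadratic residues and non-residues are equinumerous (fact 3 about finite fields). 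This is exactly the off-diagonal entry of $pI - J$, completing the computation.

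The one point requiring a little care — and the step I expect to be the main obstacle to a clean write-up — is the change of variables in the off-diagonal sum: one must correctly identify the excluded values of $t$, confirm that $t \mapsto 1 + dt^{-1}$ is a bijection from $\mathbb{F}_{p}^{\times} \setminus \{-d\}$ onto $\mathbb{F}_{p} \setminus \{0,1\}$, and then invoke the vanishing of the full character sum $\sum_{u} \chi(u) = 0$. Everything else is bookkeeping with the definition of $\chi$ and the elementary observation that $\chi(t^2) = 1$ for $t \ne 0$.
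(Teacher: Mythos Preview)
Your proof is correct and follows essentially the same route as the paper's: both verify the easy claims directly, then compute the off-diagonal entry of $QQ^{\top}$ by substituting into the sum, factoring $\chi(t)\chi(t+d) = \chi(t^{2})\chi(1+dt^{-1})$, and reducing to the vanishing of $\sum_{u}\chi(u)$. Your write-up is in fact slightly more careful than the paper's in tracking the bijection $t \mapsto 1+dt^{-1}$ and in explicitly handling the diagonal entry $p-1$.
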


\begin{proof}
The matrix is circulant since $(x+1) - (y+1) = x-y$. The matrix has zero entries on the diagonal and $\pm 1$ entries off the diagonal
(depending on whether the equation $z^{2} = x-y$ has a solution or not). So it suffices  to compute the inner product of two rows. Since the
number of non-zero quadratic residues equals the number of non-residues, $\sum_{x \in \mathbb{F}_{p}} \chi(x) = 0$.

We compute the inner product of the rows labelled $a$ and $b$. It will be convenient to sum over the non-zero terms in the inner product:
\begin{align*}
\langle r_{a}, r_{b} \rangle & =  \sum_{x\neq a,b} \chi(a-x)\chi(b-x)  \\
& =  \sum_{\substack{y = a-x\\ y\neq a-b, 0}} \chi(y)\chi(b-a+y) \\
& =  \sum_{y\neq a-b, 0} \chi(y)\chi(y)\chi\biggl( \frac{b-a}{y} + 1\biggr) \\
& =  \sum_{y\neq a-b, 0}  \chi\biggl(\frac{b-a}{y} + 1\biggr) \\
&=  - \chi(1).
\end{align*}
In moving from the second line to the third, we used that $\chi$ is multiplicative. In moving from the third line to the fourth, we use
that $\chi(y^{2}) = 1$. In moving form the fourth line to the fifth,
we used that the sum $\sum_{x} \chi(x)$ is
equal to $0$. The terms excluded from
the sum are $\chi(1) + \chi(0)$, but $\chi(0) = 0$, and the result follows.
\end{proof}

The next result is the Paley type I construction of Hadamard matrices. Following well-established conventions, a Hadamard matrix $H$ with is called \emph{skew-symmetric} if $H-I$ is skew-symmetric in the usual sense; $(H-I)^{\top}= -(H-I)$.

\begin{proposition}[Lemma 2, \cite{Paley1933}]
Suppose that $p \equiv 3 \mod 4$ is prime, and let $j_{p}$ denote the column vector of length $p$ of all ones. Then the matrix
\[ M =  \begin{bmatrix} Q+I & -j_{p} \\ j_{p}^{\top} & 1 \end{bmatrix} \]
is a skew-symmetric Hadamard matrix of order $p+1$.
\end{proposition}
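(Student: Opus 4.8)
The plan is to verify directly that $MM^{\top} = (p+1)I_{p+1}$, which by Proposition~\ref{Hadbasics} is equivalent to $M$ being Hadamard, and then to check the skew-symmetry condition $(M-I)^\top = -(M-I)$ separately. The two computations are essentially independent, so I would carry them out in turn.

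First, the Hadamard property. Since $p \equiv 3 \mod 4$, fact (4) on finite fields gives $\chi(-1) = -1$, so the Paley core matrix $Q$ is \emph{skew-symmetric}: $Q^\top = -Q$. From Proposition~\ref{paleycore} we have $QQ^\top = pI - J$, hence $-Q^2 = pI - J$, i.e. $Q^2 = J - pI$. We also need $Qj_p = \mathbf{0}$: the $x$-th entry of $Qj_p$ is $\sum_{y}\chi(x-y) = \sum_{z}\chi(z) = 0$. Now compute the block product
\[
MM^\top = \begin{bmatrix} Q+I & -j_p \\ j_p^\top & 1 \end{bmatrix}\begin{bmatrix} (Q+I)^\top & j_p \\ -j_p^\top & 1 \end{bmatrix} = \begin{bmatrix} (Q+I)(Q+I)^\top + j_p j_p^\top & (Q+I)j_p - j_p \\ j_p^\top(Q+I)^\top - j_p^\top & j_p^\top j_p + 1\end{bmatrix}.
\]
The bottom-right entry is $p + 1$. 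Using $(Q+I)^\top = -Q + I = I - Q$, the top-left block is $(Q+I)(I-Q) + J = Q - Q^2 + I - Q + J = -Q^2 + I + J = -(J - pI) + I + J = (p+1)I$. For the off-diagonal blocks, $(Q+I)j_p - j_p = Qj_p = \mathbf{0}$, and the transpose argument (or symmetry of the whole product, which is automatic) kills the other. Hence $MM^\top = (p+1)I_{p+1}$, and $M$ is a Hadamard matrix of order $p+1$.

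Second, skew-symmetry. We must show $(M - I_{p+1})^\top = -(M - I_{p+1})$. Writing it out,
\[
M - I_{p+1} = \begin{bmatrix} Q & -j_p \\ j_p^\top & 0 \end{bmatrix}, \qquad (M - I_{p+1})^\top = \begin{bmatrix} Q^\top & j_p \\ -j_p^\top & 0 \end{bmatrix} = \begin{bmatrix} -Q & j_p \\ -j_p^\top & 0 \end{bmatrix} = -(M - I_{p+1}),
\]
again using $Q^\top = -Q$. This completes the proof.

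I do not expect a genuine obstacle here; the only thing to be careful about is the interplay of signs, specifically that the hypothesis $p \equiv 3 \mod 4$ is exactly what makes $Q$ skew (via $\chi(-1) = -1$), which is used in \emph{both} halves of the argument — once to reduce $QQ^\top = pI - J$ to $Q^2 = J - pI$, and once for the skew-symmetry of $M - I$. If one preferred to minimise reliance on Proposition~\ref{paleycore}'s conclusion, one could instead recompute the row inner products of $M$ directly, but reusing $QQ^\top = pI - J$ and $Qj_p = \mathbf{0}$ is cleaner.
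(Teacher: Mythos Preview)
Your proof is correct and uses essentially the same ingredients as the paper --- the skew-symmetry of $Q$ from $\chi(-1)=-1$ and the identity $QQ^{\top}=pI-J$ from Proposition~\ref{paleycore}. The only difference is in packaging: the paper computes $(Q+I)(Q+I)^{\top}=(p+1)I-J$ and then handles orthogonality with the last row by counting $\pm 1$ entries, whereas you carry out the full block product $MM^{\top}$ directly using $Qj_p=\mathbf{0}$; you also make the skew-symmetry check for $M-I$ explicit, which the paper leaves implicit.
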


\begin{proof}
First observe that $Q^{\top}[x,y] = Q[y,x] = \chi(y-x) = -Q[x,y]$. Hence $Q^{\top} = \chi(-1)Q$.
Since $q \equiv 3 \mod 4$, the matrix $Q$ is skew-symmetric, and
\[ (Q+I) (Q+I)^{\top} = QQ^{\top} + Q + Q^{\top} + I = (p+1)I - J .\]

Since all entries of $M$ are in $\{ \pm 1\}$ it suffices to check that distinct rows of $M$ are orthogonal to verify that $MM^{\top} =
(q+1)I_{q+1}$.
Each non-terminal row contains $1 + \frac{q-1}{2}$ negative entries coming from the last column and the non-residues, and so is orthogonal
to the last row.
The inner product of any two non-terminal rows gains a contribution $+1$ from the last column and a contribution of $-1$ from the
remaining $q$ columns.
\end{proof}

Throughout this survey we describe constructions for primes $p \equiv 3 \mod 4$.
In all cases, the constructions generalise (possibly with minor variations) to all odd prime powers.
Thus the construction of Paley type I matrices is essentially unchanged for prime powers $q \equiv 3 \mod 4$,
though indices are drawn from $\{\mathbb{F}_{q}, +\}$, and the resulting matrix has a block-circulant submatrix, rather than a
circulant submatrix. Then for prime powers $q \equiv 1 \mod 4$, the Paley core is symmetric, and a variant of this construction
gives a Hadamard matrix of order $2q+2$. For analysis of the corresponding matrix of order $p \equiv 1 \mod 4$, see Proposition \ref{CohnProp}.

\section{The Ehlich--Wojtas bound}
\label{SecEW}
We have seen that Hadamard's bound is attained infinitely often, conjecturally in every dimension which is a multiple of $4$.
On the other hand, the proof of Proposition \ref{Hadbasics} shows that in all other dimensions no three $\{\pm 1\}$ vectors
are pairwise orthogonal. In this section, we follow the treatment of Wojtas \cite{Wojtas}
to establish tighter bounds on maximal determinants in
these dimensions. The next lemma will be a key tool in bounding the determinant of a non-diagonal positive definite matrix.

\begin{lemma} \label{lem1}
Let $B$ be the following positive definite symmetric matrix, and assume further that $0 < b \leq | b_{i}|$ for $1 \leq i \leq k{:}$
\[ B = \begin{bmatrix}
m & g_{1,2} & g_{1,3} & \ldots & g_{1,k} & b_{1} \\
g_{2,1} & m & g_{2,3} & \ldots & g_{2,k} & b_{2} \\
\vdots & \vdots & \vdots & \ldots & \vdots & \vdots \\
g_{k,1} & g_{k,2} & g_{k,3} & \ldots & m& b_{k} \\
b_{1}^{\ast} & b_{2}^{\ast} & b_{3}^{\ast} & \ldots & b_{k}^{\ast} & b
\end{bmatrix}\!. \]
Then $\det(B) \leq b(m-b)^{k}$.
\end{lemma}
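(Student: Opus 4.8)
The plan is to strip off the last row and column of $B$ by a Schur complement, reducing the whole statement to Hadamard's inequality for positive definite matrices (the bound $\det \le \prod(\text{diagonal entries})$ extracted from the proof of Theorem \ref{Had}) applied to the $k \times k$ complement.

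First I would write $B$ in block form as $B = \begin{pmatrix} A & \mathbf{b} \\ \mathbf{b}^{\ast} & b \end{pmatrix}$, where $A$ is the $k \times k$ leading principal submatrix, all of whose diagonal entries equal $m$, and $\mathbf{b} = (b_{1}, \ldots, b_{k})^{\top}$. Since $B$ is positive definite, the scalar $b$ is positive, so the Schur complement $C := A - \tfrac{1}{b}\mathbf{b}\mathbf{b}^{\ast}$ is well defined. Left-multiplying $B$ by $\begin{pmatrix} I & -\mathbf{b}/b \\ 0 & 1 \end{pmatrix}$ (a unimodular transformation in the spirit of Equation \eqref{SchurComplement}) produces the block lower-triangular matrix $\begin{pmatrix} C & 0 \\ \mathbf{b}^{\ast} & b \end{pmatrix}$, and taking determinants yields $\det(B) = b\,\det(C)$ — precisely the instance of Equation \eqref{detid} adapted to a $1 \times 1$ bottom-right block. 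Moreover $C$ is itself positive definite: for nonzero $\mathbf{x}$, evaluating the quadratic form of $B$ at $(\mathbf{x}, -\mathbf{b}^{\ast}\mathbf{x}/b)^{\top}$ gives exactly $\mathbf{x}^{\ast}C\mathbf{x}$, which is therefore positive.

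Next I would estimate $\det(C)$. Its $i$-th diagonal entry is $m - |b_{i}|^{2}/b$, and these are strictly positive because $C$ is positive definite. Applying Hadamard's inequality for positive definite matrices to $C$ gives $\det(C) \leq \prod_{i=1}^{k}\bigl(m - |b_{i}|^{2}/b\bigr)$. The hypothesis $0 < b \leq |b_{i}|$ forces $|b_{i}|^{2}/b \geq b$, hence $0 < m - |b_{i}|^{2}/b \leq m - b$ for every $i$ (in particular $m - b > 0$). Multiplying these $k$ inequalities between positive quantities yields $\det(C) \leq (m-b)^{k}$, and therefore $\det(B) = b\,\det(C) \leq b(m-b)^{k}$, which is the assertion of Lemma \ref{lem1}.

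I do not expect a genuine obstacle: the whole argument is the observation that the last pivot $b$ may be removed by a Schur complement and that the resulting $k \times k$ matrix inherits positive definiteness, after which the elementary estimate $|b_{i}|^{2}/b \geq b$ on the diagonal finishes the job. The one place demanding care is the bookkeeping of signs — one must use positive definiteness of $C$ to know each factor $m - |b_{i}|^{2}/b$ is positive before concluding that the product is at most $(m-b)^{k}$, rather than settling for a bound on absolute values.
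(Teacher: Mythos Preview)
Your proof is correct and is essentially the same as the paper's: the paper carries out the Schur complement explicitly by subtracting $b_{i}/b$ times the last row and $b_{i}^{\ast}/b$ times the last column to produce a block-diagonal matrix $B'$ congruent to $B$, whose upper-left block is precisely your $C$, and then applies Hadamard's inequality to its diagonal entries $m-|b_{i}|^{2}/b$. The only cosmetic difference is that the paper inserts the intermediate estimate $m-|b_{i}|^{2}/b \le m-|b_{i}|$ before reaching $m-b$, whereas you pass directly from $|b_{i}|^{2}/b \ge b$ to the final bound.
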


\begin{proof}
For each $i$ in the interval from $1$ to $k$, subtract $b_{i}/b$ times the last row from the $i^{\textrm{th}}$ row. Similarly, subtract
$b_{i}^{\ast}/b$ times the last column from the $i^{\textrm{th}}$ column. The result is a symmetric matrix $B'$ conjugate to $B$,
which is therefore positive definite:
\begin{equation}\label{Bmat}
 B' = \begin{bmatrix}
\vspace{2pt}
 m- \frac{| b_{1}|^{2}}{b} & g'_{1,2} & g'_{1,3} & \ldots & g'_{1,k} & 0\,\\
g'_{2,1} &m- \frac{| b_{2}|^{2}}{b} & g'_{2,3} & \ldots & g'_{2,k} & 0 \,\\
\vdots & \vdots & \vdots & \ldots & \vdots & \vdots \,\\
g'_{k,1} & g'_{k,2} & g'_{k,3} & \ldots & m- \frac{| b_{k}|^{2}}{b}& 0 \,\\
0 & 0 & 0 & \ldots & 0 & b
\end{bmatrix}\!.
\end{equation}
Clearly, $\det(B') = \det(B) = b \Delta$ where $\Delta$ is the determinant of the $k \times k$ matrix in the upper left of $B'$.
We apply the Hadamard bound (as interpreted for positive definite matrices) and the bound $|b_{i}|b^{-1} \geq 1$ to complete the proof:
\[ \det(B) \leq b \prod_{i=1}^{k} \biggl( m-  \frac{| b_{i}|^{2}}{b}\biggr) \leq b \prod_{i=1}^{k} ( m- | b_{i}|
) \leq b ( m- b )^{k}.\qedhere\]
\end{proof}

The next theorem was established independently by Ehlich \cite{Ehlich} and Wojtas \cite{Wojtas}, via essentially the same argument.
We have followed Wojtas' proof, which is determinant theoretic, in the style of Hadamard.

\begin{theorem} \label{Wojtas}
Let $G$ be an $n\times n$ real positive definite symmetric matrix,
with diagonal entries $m$. Let $b$ be a positive real number such that $b \leq |g_{i,j}|$ for all off-diagonal entries of $G$.
Then
\[ \det(G) \leq (m + nb - b) (m-b)^{n-1} .\]
\end{theorem}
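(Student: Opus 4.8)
The plan is to mimic the induction driving the proof of Theorem~\ref{Had}, but carrying along the extra parameter $b$ so that Lemma~\ref{lem1} can be applied at the crucial step. Write $G_k$ for the leading $k\times k$ principal submatrix and, as in Theorem~\ref{Had}, decompose $\det(G_k) = m\,P_{k-1} + N_k$ using linearity of the determinant in the last row, where $N_k$ is obtained by zeroing the bottom-right entry. The point of Theorem~\ref{Had} was that $N_k \le 0$; here we need a quantitative lower bound on how negative $N_k$ can be, because it is precisely the term $nb^2(\text{stuff})$ hidden in $N_n$ that sharpens Hadamard's $m^n$ to $(m+nb-b)(m-b)^{n-1}$. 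So the real content is an auxiliary inductive claim of the shape: for every index set $\mathcal I$ of size $k$,
\[
P_{\mathcal I} \;\le\; \bigl(m + (k-1)b\bigr)(m-b)^{k-1},
\]
with a companion statement controlling $N_{\mathcal I}$ from below, e.g.\ $N_{\mathcal I} \ge -b^2(m-b)^{k-2}$ or similar — the exact companion inequality is what I would reverse-engineer so that the induction closes.

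The steps, in order: (1) Base case $k=1$: $P_{\{i\}} = m$, trivially matching. For $k=2$, $P_{\{i,j\}} = m^2 - |g_{i,j}|^2 \le m^2 - b^2 = (m+b)(m-b)$, and $N_{\{i,j\}} = -|g_{i,j}|^2 \le -b^2$, establishing both halves. (2) Inductive step for $P$: use $P_k = m\,P_{k-1} + N_k$, insert the inductive bound on $P_{k-1}$ and the companion bound on $N_k$, and verify the arithmetic identity $m\bigl(m+(k-2)b\bigr)(m-b)^{k-2} - b^2(m-b)^{k-3} \le \bigl(m+(k-1)b\bigr)(m-b)^{k-1}$; this is a routine polynomial manipulation once the companion constant is pinned down. (3) Inductive step for the companion $N$-bound: this is where Lemma~\ref{lem1} enters. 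Apply the determinantal identity \eqref{detid} exactly as in the proof of Theorem~\ref{Had}, with $V$ the matrix defining $N_k$ and $V_1 = G_{k-2}$, obtaining $N_k^{-1}P_{k-2} = \det(U_4)$ where $U_4$ is a $2\times 2$ matrix whose entries are $P_{k-1}$, the complementary $N$-minor, and an off-diagonal minor $\gamma$; bound the pieces using the inductive hypotheses and the uniform lower bound $|g_{i,j}| \ge b$ to get the needed inequality on $N_k$. (4) Finally take $\mathcal I = \{1,\dots,n\}$ in the $P$-bound. Comparing $\bigl(m+(n-1)b\bigr)(m-b)^{n-1}$ with the claimed $(m+nb-b)(m-b)^{n-1}$ — note $m + (n-1)b = m + nb - b$, so these agree exactly, and the theorem follows.

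The main obstacle will be identifying the correct companion inequality for $N_{\mathcal I}$ and verifying that it is \emph{simultaneously} strong enough to feed into the $P$-recursion in step~(2) and weak enough to be provable from the inductive hypotheses plus Lemma~\ref{lem1} in step~(3) — the self-consistency of this two-part induction is the delicate point, and I expect the right statement to involve the same factor $(m-b)^{|\mathcal I|-2}$ so that the powers of $(m-b)$ propagate cleanly. An alternative, possibly cleaner route that avoids reverse-engineering the $N$-bound: perform the row/column operations of Lemma~\ref{lem1} directly on $G$ using its last row (reducing the last diagonal entry from $m$ to roughly $m - b$ and killing the last column), then recurse on the resulting $(n-1)\times(n-1)$ positive definite matrix whose off-diagonal entries still have modulus $\ge b$ but whose diagonal is no longer constant; this trades the intricate induction for a careful bookkeeping of how the diagonal degrades, and one would still invoke Lemma~\ref{lem1} at the end. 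I would try the direct Hadamard-style induction first, since the paper explicitly flags that ``slightly intricate induction hypotheses appear to be a necessary feature of these proofs.''
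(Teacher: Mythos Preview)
Your two-part induction has a genuine gap. To close the $P$-recursion $P_k = mP_{k-1} + N_k$ using only the inductive bound $P_{k-1} \le (m+(k-2)b)(m-b)^{k-2}$, you would need $N_k \le -(k-1)b^2(m-b)^{k-2}$ \emph{uniformly}. But this fails: take $k=3$ and
\[
G_3 = \begin{pmatrix} m & c & b \\ c & m & b \\ b & b & m \end{pmatrix}, \qquad b < c < m.
\]
Then $N_3 = -2b^2(m-c)$, which is strictly larger than $-2b^2(m-b)$ whenever $c>b$. The point is that when the earlier rows are far from extremal (so $P_{k-1}$ is well below its bound), $N_k$ need not be very negative; the two quantities trade off against each other, and no independent companion bound on $N_k$ can exist. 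Your step~(3) proposal to extract such a bound from Lemma~\ref{lem1} via the identity \eqref{detid} cannot succeed, because Lemma~\ref{lem1} applies to matrices with bottom-right entry $b$, whereas the matrix defining $N_k$ has bottom-right entry $0$; the identity \eqref{detid} only yields the sign of $N_k$, not a quantitative upper bound.

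The fix, and the paper's actual argument, is to split the last row differently: write the bottom-right entry as $(m-b) + b$ rather than $m + 0$. The first piece contributes $(m-b)\det(G_{n-1})$; the second piece is \emph{exactly} the matrix $B$ of Lemma~\ref{lem1} (bottom-right entry $b$, last column with $|b_i| \ge b$), so when it is positive definite the lemma gives $\le b(m-b)^{n-1}$ directly, and when its determinant is non-positive we simply discard it. Either way one gets the clean recursion $\det(G) \le (m-b)\det(G_{n-1}) + b(m-b)^{n-1}$, and a single-hypothesis induction finishes. Your alternative route at the end gestures toward this, but the crucial move is the specific choice of split, not further row operations.
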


\begin{proof}
Since the determinant is linear in the rows of $G$, we rewrite the determinant as follows:
\begin{equation}\label{SplitA}
\det(G)  = \det \begin{bmatrix} m & g_{1,2} & \ldots & g_{1,n-1} & g_{1,n} \\
g_{2,1} & m & \ldots & g_{2,n-1} & g_{2,n} \\
\vdots & \vdots & \ldots & \vdots & \vdots \\
g_{n-1,1} & g_{n-1,2} & \ldots & m& g_{n-1,n} \\
0 & 0 & \ldots & 0 & m-b
\end{bmatrix} +
\det \begin{bmatrix} m & g_{1,2} & \ldots & g_{1,n-1} & g_{1,n} \\
g_{2,1} & m & \ldots & g_{2,n-1} & g_{2,n} \\
\vdots & \vdots & \ldots & \vdots & \vdots \\
g_{n-1,1} & g_{n-1,2} & \ldots & m& g_{n-1,n} \\
g_{n,1} & g_{n,2} & \ldots & g_{n-1,n} & b
\end{bmatrix}\!.
\end{equation}
Consider the second term on the right-hand side of Equation \eqref{SplitA}: the principal minors of the matrix are positive, so the matrix is positive definite if and only if the determinant is positive. This is Sylvester's characterisation of positive definite matrices (see Theorem 7.2.5, \cite{HornJohnson}), so Lemma \ref{lem1} applies. We obtain the inequality
\begin{equation} \label{recursiveEq}
 \det(G) \leq (m-b) \det(G_{n-1}) + b ( m- b )^{n-1},
\end{equation}
where $G_{n-1}$ is the $(n-1)\times (n-1)$ principal minor of $G_{n}$.
In the case that the second term is non-positive, we obtain
\[ \det(G) \leq (m-b) \det(G_{n-1}) \leq  (m-b) \det(G_{n-1}) + b ( m- b )^{n-1} ,
\]
so this inequality holds in either case.

Finally, we establish the result by induction. Observe that for the case $n = 2$, the result holds:
\[ \det \begin{bmatrix} m & g_{1,2} \\ g_{2,1} & m\end{bmatrix} = m^{2} - | g_{1,2}|^{2} \leq m^{2} - a^{2} = (m+a)(m-a) \]
for any $a \leq |g_{1,2}|$. Now, assume the result holds for $(n-1) \times (n-1)$ matrices, in particular for
the matrix $G_{n-1}$ in Equation \eqref{recursiveEq}. Then
\begin{align*}
\det(G) & \leq   (m-b) ( m + (n-1)b - b)(m-b )^{n-2}  + b ( m- b )^{n-1} \\
& \leq  ( m + nb - b) ( m-b)^{n-1} .
\end{align*}
This completes the proof.
\end{proof}

Later a characterisation of certain matrices meeting the bound of Theorem \ref{Wojtas} will be required.

\begin{corollary} \label{daddydidit}
Let $G$ be an $n \times n$ symmetric positive definite matrix, with diagonal entries $n$ and $|g_{i,j}| \geq b$ for all $i \neq j$.
If $\det(G) = (n + (n-1)b) (n-b)^{n-1}$, then up to permutation and negation of rows and columns,
\[ G = (n-b)I + bJ, \]
where $J$ is the all-ones matrix.
\end{corollary}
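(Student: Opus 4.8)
The plan is to track the equality conditions through the proof of Theorem \ref{Wojtas} and show that equality at every stage forces $G$ to have the stated form. We are in the case $m = n$ and all off-diagonal entries satisfy $|g_{i,j}| \geq b$, and we are given $\det(G) = (n + (n-1)b)(n-b)^{n-1}$, i.e.\ equality holds in Theorem \ref{Wojtas}. Since the inequality $\det(G) \leq (m+nb-b)(m-b)^{n-1}$ was obtained as a chain of inequalities, equality propagates backwards through all of them. The first place to extract information is the inequality \eqref{recursiveEq}, $\det(G) \leq (m-b)\det(G_{n-1}) + b(m-b)^{n-1}$, together with the inductive bound $\det(G_{n-1}) \leq (n + (n-2)b)(n-b)^{n-2}$. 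Equality in the final step of the induction forces $\det(G_{n-1}) = (n + (n-2)b)(n-b)^{n-2}$ exactly; but note that the diagonal entries of $G_{n-1}$ are still $n$, not $n-1$, so $G_{n-1}$ also meets the Ehlich--Wojtas bound for its own size with the \emph{same} parameters $m = n$, $b$. Hence by induction (on $n$, base case $n = 2$ handled directly) we may assume $G_{n-1} = (n-b)I_{n-1} + bJ_{n-1}$ up to permutation and negation of rows/columns — but we must be careful, since that conclusion is only up to a signing, and we have to reconcile the signing of $G_{n-1}$ with the last row and column of $G$.

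Next I would exploit equality in the application of Lemma \ref{lem1} to the second matrix in \eqref{SplitA}. Examining the proof of Lemma \ref{lem1}, equality in $\det(B) \leq b(m-b)^k$ requires equality in two places: first, $|b_i|b^{-1} = 1$ for every $i$, so every entry of the last column (equivalently, of the last row) of $G$ has absolute value exactly $b$; and second, equality in the Hadamard bound applied to the $k \times k$ matrix in the upper-left of $B'$, which by the equality analysis following Theorem \ref{Had} forces that matrix to be diagonal, hence equal to $(m - b)I_k$ once we use $|b_i| = b$. Unwinding the row/column operations that produced $B'$ from $B$, the matrix $B = G$ (the whole Gram matrix, viewed with its last row/column split off) then satisfies $g_{i,j} = b_i b_j^{*}/b$ for $i \neq j$ among the first $n-1$ indices, with $|b_i| = b$ for all $i$. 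Since the $b_i$ are real (as $G$ is real symmetric) and $|b_i| = b$, each $b_i = \pm b$, so $g_{i,j} = \pm b$ with sign equal to $\mathrm{sign}(b_i)\,\mathrm{sign}(b_j)$, and the diagonal is $n$. Negating row and column $i$ of $G$ whenever $b_i = -b$ makes every off-diagonal entry equal to $+b$, i.e.\ transforms $G$ into $(n-b)I_n + bJ_n$, which is what we want.

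The main obstacle I expect is bookkeeping the signings consistently: the inductive hypothesis gives the form of $G_{n-1}$ only up to an unknown permutation-and-negation, while the Lemma \ref{lem1} equality analysis gives the form of $G$ directly in terms of the last row $(b_1, \dots, b_{n-1})$, and one must check these two descriptions agree (they should, since the rank-one structure $g_{i,j} = b_i b_j^{*}/b$ for $i,j \le n-1$ already pins down $G_{n-1}$ to be $(n-b)I + bJ$ after the signing by $\mathrm{sign}(b_i)$, so in fact the Lemma route alone suffices and the induction is needed only to guarantee that equality in \eqref{recursiveEq} actually occurs rather than the slack case). A cleaner way to organise the argument, which I would adopt to sidestep the signing headache, is: from $\det(G) = (n + (n-1)b)(n-b)^{n-1}$ and \eqref{recursiveEq}, deduce that the second term in \eqref{SplitA} is \emph{not} non-positive, so the "either case" collapses to the genuine Lemma \ref{lem1} bound being tight; then apply the Lemma \ref{lem1} equality analysis once, to the full matrix $G$ (with its $n$-th row as the distinguished last row), obtaining $|g_{i,n}| = b$ for all $i < n$ and $G_{n-1} - $ (rank-one correction) $= (n-b)I_{n-1}$; finally observe that by the symmetry of the hypothesis (any row could have played the role of the last row) the same holds with every index distinguished, forcing $|g_{i,j}| = b$ for all $i \neq j$ and then, from any single application, $g_{i,j}g_{j,k} = b\,g_{i,k}$ for distinct $i,j,k$, which is exactly the cocycle condition making the sign pattern a coboundary; choosing the signing accordingly yields $G = (n-b)I + bJ$.
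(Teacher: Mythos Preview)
Your proposal is correct and follows essentially the same route as the paper: both arguments trace equality back through the proof of Theorem \ref{Wojtas} to force equality in Lemma \ref{lem1}, which yields $|b_i| = b$ and that the upper-left block of $B'$ is diagonal, and then read off the structure of $G$. Your unwinding to $g_{i,j} = b_i b_j/b$ is in fact slightly more direct than the paper's two-step version (first arguing $|g_{i,j}| = b$ for all $i \neq j$ by permuting a hypothetically large entry into the last column, then normalising the last row and column to $+b$ and observing the subtraction makes $B'$ diagonal iff every off-diagonal entry equals $b$); your middle paragraph already finishes the proof, and the later symmetry/cocycle reorganisation is unnecessary.
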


\begin{proof}
The bound in Theorem \ref{Wojtas} is attained if and only if the bound
in Lemma \ref{lem1} is attained. This relies on the Hadamard bound, which is
attained only if the displayed matrix $B'$ of Equation \eqref{Bmat} is diagonal.

Suppose there is an off-diagonal entry $g_{i,j}$ of magnitude larger than $|b|$. Without loss of generality, we permute the rows and columns of $G$ so that this entry is in the last column. Negating rows and columns, we may
assume that all entries in the last row and column of $G$ are positive. Then we calculate the determinant in the manner of Equation \eqref{SplitA}. Evaluate the determinant of the rightmost term as in Lemma \ref{lem1}, observing that $|g_{i,j}| > b$ forces a strict inequality. Hence $|g_{i,j}| = b$ for all off-diagonal entries in the matrix.

Tracing the proof of Theorem \ref{Wojtas}
with this matrix, we are led again to Lemma \ref{lem1}, in which the bottom-right entry of $G$ is replaced with $b$.
Subtracting the final row of this matrix from all others results in subtracting~$b$ from all entries in the matrix.
This matrix is diagonal precisely when all off-diagonal entries are equal to $b$, completing the proof.
\end{proof}

\section{The Barba bound and matrices with \texorpdfstring{$n \equiv 1 \mod 4$}{n = 1 mod 4}}\label{Sec1mod4}

The next result was first established by Barba \cite{Barba}, but follows easily from Theorem \ref{Wojtas}. For an overview of the history
of this result, see Neubauer and Radcliffe \cite{NeubauerRadcliffe}.

\begin{corollary}\label{Barba}
Let $M$ be a matrix of odd order with entries in $\{\pm 1\}$.
Then $\det(M) \leq \sqrt{2n-1} (n-1)^{\frac{n-1}{2}}$.
\end{corollary}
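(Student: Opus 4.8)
The plan is to pass to the Gram matrix $G = MM^{\top}$ and invoke Theorem \ref{Wojtas}. First I would dispose of the degenerate case: if $M$ is singular then $\det(M) = 0$ and the asserted inequality holds trivially, so we may assume $M$ is nonsingular, in which case $G = MM^{\top}$ is symmetric positive definite and the hypotheses of Theorem \ref{Wojtas} are in force.

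Next I would identify the two parameters. Since every row of $M$ is a $\{\pm1\}$-vector of length $n$, each diagonal entry of $G$ equals $n$, so $m = n$. For an off-diagonal entry, $g_{i,j}$ is the inner product of two distinct rows of $M$; writing it as $n$ minus twice the number of coordinates in which the two rows disagree shows $g_{i,j} \equiv n \pmod 2$, and since $n$ is odd this means $g_{i,j}$ is a nonzero (odd) integer. Hence $|g_{i,j}| \geq 1$ for all $i \neq j$, and we may take $b = 1$ in Theorem \ref{Wojtas}.

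With $m = n$ and $b = 1$, Theorem \ref{Wojtas} gives
\[ \det(G) \leq (n + n\cdot 1 - 1)(n-1)^{n-1} = (2n-1)(n-1)^{n-1}. \]
Since $\det(G) = \det(M)\det(M^{\top}) = \det(M)^{2}$, taking square roots yields $|\det(M)| \leq \sqrt{2n-1}\,(n-1)^{(n-1)/2}$, which in particular gives the stated bound on $\det(M)$.

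There is no genuine obstacle here; the result is a direct specialisation of Theorem \ref{Wojtas}. The only two points requiring care are the parity observation that $g_{i,j}$ is an odd integer (this is precisely where the oddness of $n$ enters, and it is what licenses the choice $b = 1$), and the preliminary reduction to the nonsingular case so that positive definiteness is available.
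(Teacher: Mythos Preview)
Your proof is correct and follows essentially the same route as the paper's: form the Gram matrix, note that the diagonal entries equal $n$ and the off-diagonal entries have magnitude at least $1$, and apply Theorem~\ref{Wojtas} with $m=n$, $b=1$. The paper's version is terser (omitting your parity argument and the singular-case reduction), but the substance is identical.
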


\begin{proof}
The diagonal entries in the Gram matrix are $n$ and the minimal magnitude of the off-diagonal entries in the Gram matrix is $1$.
Applying Theorem \ref{Wojtas} with $b = 1$ and $m = n$ we obtain
\[ \det(MM^{\ast}) \leq ( 2n-1 ) ( n-1)^{n-1}.\]
Hence $|\mkern-2mu\det(M)| \leq \sqrt{2n-1} (n-1)^{\frac{n-1}{2}}$.
\end{proof}

We will now work to characterise the Gram matrices which attain the bound of Corollary \ref{Barba}. If $M$ is a $\{ \pm 1\}$ matrix of odd
order, then no two rows of $M$ are orthogonal. It is possible to say a little more.

\begin{proposition} \label{mod4}
Let $M$ be a $\{ \pm 1 \}$ matrix of odd order $n$. There exists a diagonal $\{ \pm 1 \}$ matrix
$D$ such that $N = DM$ satisfies $NN^{\top} \equiv nJ \mod 4$. That is, all inner products
in the normalised matrix $N$ are congruent to $n \mod 4$.
\end{proposition}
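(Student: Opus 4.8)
The plan is to normalize $M$ so that its first column is all $+1$, and to exploit the fact that inner products of $\{\pm 1\}$ vectors of length $n$ are congruent to $n \bmod 2$ already, so the real content is a mod-$4$ statement. First I would choose $D = \mathrm{diag}(d_1,\dots,d_n)$ with $d_i$ equal to the first entry of row $i$ of $M$; then $N = DM$ has first column $j_n$ (the all-ones column), and $NN^\top = DMM^\top D$, so negating rows only changes inner products by sign and does not affect $|\det|$. It remains to show that for any two rows $r_a, r_b$ of $N$, both beginning with $+1$, we have $\langle r_a, r_b\rangle \equiv n \bmod 4$ (the diagonal case $a=b$ gives $\langle r_a, r_a\rangle = n$ trivially).

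The key computation is a counting identity for $\pm 1$ vectors. For rows $r_a$ and $r_b$ of length $n$, write $s = \#\{k : (r_a)_k = (r_b)_k\}$ for the number of agreements; then $\langle r_a, r_b\rangle = s - (n - s) = 2s - n$. So I must show $2s - n \equiv n \bmod 4$, i.e. $2s \equiv 2n \bmod 4$, i.e. $s \equiv n \bmod 2$ — but that is automatic since $s$ and $n-s$ have the same parity precisely when\ldots no: $s + (n-s) = n$, so this alone gives nothing modulo $4$. The extra leverage comes from the normalization: both rows agree in the first coordinate, and more importantly I should bring in a \emph{third} reference, namely compare each row against the all-ones vector. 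Since row $r_a$ begins with $+1$, let $p_a$ denote the number of $+1$ entries in $r_a$; orthogonality-type parity constraints are not available (the order is odd), but I can still write $\langle r_a, j_n\rangle = 2p_a - n$ and $\langle r_b, j_n\rangle = 2p_b - n$. The decisive observation is the elementary identity, valid for any three $\pm 1$ vectors $u, v, w$ of length $n$,
\[
\langle u, v\rangle + \langle v, w\rangle + \langle w, u\rangle \equiv n \bmod 4,
\]
proved by noting that each coordinate contributes $+3$ or $-1$ to the left-hand side, and $+3 \equiv -1 \bmod 4$, so the sum is $\equiv -n \equiv 3n \bmod 4$; combined with $n$ odd this is $\equiv n \bmod 4$. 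Taking $w = j_n$, this reads $\langle r_a, r_b\rangle + \langle r_b, j_n\rangle + \langle j_n, r_a\rangle \equiv n \bmod 4$.

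To finish I would pin down $\langle r_a, j_n\rangle \bmod 4$ using the normalization. In fact the cleanest route avoids even needing the first column: apply the three-vector identity directly with $u = r_a$, $v = r_b$, $w = r_a$ to get $2\langle r_a, r_a\rangle + \langle r_a, r_b\rangle \equiv n$, i.e. $2n + \langle r_a, r_b \rangle \equiv n$, i.e. $\langle r_a, r_b\rangle \equiv n - 2n = -n \equiv 3n \bmod 4$. Since $n$ is odd, $3n \equiv -n$, which is $\equiv n \bmod 4$ exactly when $2n \equiv 0 \bmod 4$, i.e. when $n$ is even — contradiction. So this shortcut fails, which is exactly why the normalization by $D$ is genuinely needed: the raw inner products of an arbitrary odd-order $\pm1$ matrix need \emph{not} be congruent mod $4$, and the role of $D$ is to force each row to have an odd number of $+1$'s (equivalently $p_a \equiv (n+1)/2 \bmod 2$ is \emph{not} what we get; rather $d_i$ is chosen so the first coordinate is $+1$, but one then also argues each row may further be negated so that $\langle r_a, j_n\rangle \equiv 1 \bmod 4$ — the diagonal entry of $NN^\top J$, so to speak). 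Concretely: after fixing $d_i$ by the first-column rule, for the off-diagonal congruence I substitute $w = j_n$ into the three-vector identity and am left needing $\langle r_a, j_n\rangle + \langle r_b, j_n\rangle \equiv 0 \bmod 4$ for all $a,b$; this holds iff all $\langle r_i, j_n\rangle$ are congruent mod $4$, and since each is odd (as $n$ is odd), they are all $\equiv 1$ or all $\equiv 3$ — and replacing $D$ by $-D$ if necessary makes them all $\equiv n \bmod 4$ while preserving $NN^\top \bmod 4$ up to the harmless global sign.

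The main obstacle, then, is not the per-coordinate arithmetic (which is the $+3 \equiv -1$ triviality) but the bookkeeping of the normalization: one must verify that a single choice of signs $d_i$ simultaneously controls \emph{all} the row sums $\langle r_i, j_n\rangle$ modulo $4$, not merely their parities. I expect this to require the observation that for a $\pm 1$ vector of odd length, negating the whole vector flips $\langle r, j_n\rangle \bmod 4$ between $1$ and $3$, so the sign $d_i$ that makes the first entry $+1$ does \emph{not} a priori fix the row sum mod $4$ — hence the argument must instead choose $d_i$ so that each row sum is $\equiv n \bmod 4$ directly, and then check this $D$ still has the claimed effect. Writing this cleanly, with the three-vector identity isolated as the engine, is the whole proof.
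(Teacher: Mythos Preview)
Your proposal has the right shape but contains a sign error that propagates and breaks the argument as written. You correctly compute that each coordinate contributes $+3$ or $-1$ to $\langle u,v\rangle + \langle v,w\rangle + \langle w,u\rangle$, so the sum is $\equiv -n \equiv 3n \pmod 4$. But the next step, ``combined with $n$ odd this is $\equiv n \pmod 4$'', is false: $3n \equiv n \pmod 4$ would require $2n \equiv 0 \pmod 4$, i.e.\ $n$ even. The correct three-vector identity is $\langle u,v\rangle + \langle v,w\rangle + \langle w,u\rangle \equiv -n \pmod 4$. With $w = j_n$ and the goal $\langle r_a, r_b\rangle \equiv n$, the condition you actually need is $\langle r_a, j_n\rangle + \langle r_b, j_n\rangle \equiv -2n \equiv 2n \pmod 4$, not $\equiv 0$. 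Your proposed normalisation (choose $d_i$ so each row sum is $\equiv n \pmod 4$) does satisfy this corrected condition, since $n + n = 2n$; it does \emph{not} satisfy the condition $\equiv 0$ you wrote down, so as stated your argument does not close.

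Once the sign is fixed, your approach works and is essentially a repackaging of the paper's proof. The paper chooses $D$ so that every row of $N$ has an \emph{even} number of $-1$ entries --- equivalently, row sum $\equiv n \pmod 4$, exactly your final normalisation --- and then computes directly: if rows $u,v$ have $2a$ and $2b$ negative entries with $c$ common negative positions, then $\langle u,v\rangle = n - 4(a+b-c) \equiv n \pmod 4$. This two-line count replaces your detour through the three-vector identity and avoids the sign bookkeeping entirely; your route is not wrong, just longer and (as written) error-prone.
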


\begin{proof}
Define $D$ to be the diagonal $\{\pm 1\}$ matrix which contains a $-1$ in row $i$ if and only if the number of negative entries in row $i$ of $M$ is odd. Then every row of $N = DM$ has an even number of $-1$ entries.

Let $u,v$ be $\{\pm 1 \}$ vectors of length $n$ with $2a$ and $2b$ negative entries respectively. Suppose that the negative entries
coincide at $c$ positions.
Then
\[ \langle u, v\rangle = n - 2 ( 2a - c) - 2( 2b - c) = n - 4(a+b-c) \equiv n \mod 4 .\]
So the proposition holds for the matrix $N$.
\end{proof}

Following Ehlich now, we apply Proposition \ref{mod4} to characterise the $\{\pm 1\}$ matrices  (if any) which meet the bound of Theorem
\ref{Wojtas} with equality. (Wojtas' proof of this result involves a rather lengthy discussion of elementary row operations.)

\begin{theorem} \label{1mod4}
Let $M$ be an $n \times n$ matrix with entries in $\{ \pm 1\}$. If $\det(M)$ meets the bound of Corollary \ref{Barba} with equality then$:$
\begin{enumerate}[itemsep=-2pt]
	\item $2n-1$ is a perfect square and $n \equiv 1 \mod 4$.
	\item Up to permutation and negation of rows and columns, $MM^{\top} = (n-1)I + J$.
\end{enumerate}
\end{theorem}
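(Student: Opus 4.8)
The plan is to combine the equality analysis for Theorem~\ref{Wojtas} (as packaged in Corollary~\ref{daddydidit}) with the parity constraint of Proposition~\ref{mod4}. First I would observe that if $\det(M)$ meets the Barba bound, then $G = MM^{\top}$ attains the Ehlich--Wojtas bound of Theorem~\ref{Wojtas} with $m=n$ and $b=1$; since $G$ has diagonal entries $n$ and all off-diagonal inner products of a $\{\pm1\}$ matrix of odd order are nonzero odd integers (hence $|g_{i,j}|\geq 1$), Corollary~\ref{daddydidit} applies verbatim and yields that, up to permutation and negation of rows and columns, $G = (n-1)I + J$. This already gives the shape claimed in part~(2), but note that the permutation/negation applied to $G$ corresponds to permuting and negating rows of $M$ (a negation of a row of $M$ negates the corresponding row \emph{and} column of $G$), so the normal form is achieved by row operations on $M$ alone; one should remark that this is legitimate since such operations change $\det(M)$ only by sign.

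Next I would extract the arithmetic consequences. With $G=(n-1)I+J$ we compute $\det(G) = (2n-1)(n-1)^{n-1}$ directly (eigenvalues $2n-1$ once and $n-1$ with multiplicity $n-1$), confirming consistency, and then note $\det(M)^2 = \det(G) = (2n-1)(n-1)^{n-1}$. Since $n$ is odd, $(n-1)^{n-1}$ is an even power of an integer and hence a perfect square, so $2n-1$ must itself be a perfect square — this is the first half of part~(1). For the congruence $n \equiv 1 \bmod 4$, I would invoke Proposition~\ref{mod4}: there is a diagonal $\{\pm1\}$ matrix $D$ with $N = DM$ satisfying $NN^{\top} \equiv nJ \bmod 4$. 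But negating rows of $M$ does not change $|\det(M)|$, so $N$ also meets the Barba bound, and by the argument just given its Gram matrix is, up to permutation and negation of rows/columns, equal to $(n-1)I+J$. The off-diagonal entries of $NN^{\top}$ are therefore all $\pm 1$ (after the normalising operations, all equal to $1$), and combined with $NN^{\top}\equiv nJ \bmod 4$ this forces $1 \equiv n \bmod 4$. A small point to be careful about: the congruence $NN^\top \equiv nJ \bmod 4$ is a statement about $N$ itself, whereas the normal form involves further negations; since negating a row of $N$ negates an entire row and column of $NN^\top$, and $\pm 1 \equiv n$ and $\mp 1 \equiv n \bmod 4$ cannot both hold unless $n$ is even — wait, this needs $n$ odd, which we have — so in fact $n\equiv 1\bmod 4$ is forced already by the existence of a single off-diagonal entry equal to $1$ in some normalisation, because all off-diagonal entries of $NN^\top$ are odd and congruent to $n\bmod 4$, and one of them has absolute value $1$.

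The main obstacle I anticipate is bookkeeping the interaction between the two normalisations — the one from Corollary~\ref{daddydidit} (arbitrary permutation and negation of rows/columns of $G$) and the one from Proposition~\ref{mod4} (a specific row-negation $D$ of $M$). The cleanest way around this is to apply the results in the right order: first normalise $M$ by $D$ to get $N$ with $NN^\top \equiv nJ\bmod 4$; then run the Ehlich--Wojtas equality analysis on $N$, which shows all off-diagonal entries of $NN^\top$ have magnitude exactly $1$ (the strict-inequality step in Corollary~\ref{daddydidit} rules out magnitude $>1$, and magnitude $\geq 1$ is automatic for odd order); then the congruence gives $\pm 1 \equiv n \bmod 4$ for each off-diagonal entry, and since $n$ is odd this is only possible if all of them equal $n\bmod 4$ in the same sign class, i.e.\ $n\equiv 1\bmod 4$ (with all inner products $\equiv 1$) or $n \equiv 3 \bmod 4$ (with all inner products $\equiv -1$); in the latter case $NN^\top = nI - (J-I) = (n+1)I - J$, whose determinant is $(n+1-n)\cdot$(something)$= 1\cdot(n+1)^{n-1}$, wait more carefully $\det((n+1)I-J) = (n+1-n)(n+1)^{n-1} = (n+1)^{n-1}$, which is strictly less than $(2n-1)(n-1)^{n-1}$ for $n\geq 3$, contradicting that $N$ meets the bound. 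Hence $n\equiv 1\bmod 4$. Finally, back in the $n\equiv1\bmod4$ case, $NN^\top = nI + (J-I) = (n-1)I + J$ outright, giving part~(2) without any further negation, and the perfect-square claim follows as above.
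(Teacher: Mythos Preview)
Your proof is correct and uses the same two ingredients as the paper (Proposition~\ref{mod4} and Corollary~\ref{daddydidit}), but the paper handles $n\equiv 1\bmod 4$ far more cleanly. Rather than splitting into cases on whether the normalised off-diagonal entries are $+1$ or $-1$ and then comparing $\det\bigl((n+1)I-J\bigr)$ against the Barba value, the paper simply observes that integrality of $\det(M)=\sqrt{2n-1}\,(n-1)^{(n-1)/2}$ forces $2n-1$ to be a perfect square; since $2n-1$ is odd and every odd square is $\equiv 1\bmod 8$, one gets $n\equiv 1\bmod 4$ immediately. With that established first, Proposition~\ref{mod4} puts all entries of $NN^{\top}$ in the class $1\bmod 4$, and Corollary~\ref{daddydidit} pins the off-diagonal magnitudes to exactly $1$, so every off-diagonal entry must equal $+1$ --- no case analysis or determinant comparison is needed. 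Your inequality $(n+1)^{n-1}<(2n-1)(n-1)^{n-1}$ is valid (the ratio $(1+\tfrac{2}{n-1})^{n-1}$ is bounded by $e^2$ while $2n-1$ is unbounded), but it is doing work that the congruence $2n-1\equiv 1\bmod 8$ does for free.
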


\begin{proof}
Since $M$ is a matrix with integer entries, $|\mkern-2mu\det(M)| = \sqrt{2n-1} (n-1)^{\frac{n-1}{2}}$ is an integer.
Hence $n$ is odd and $2n-1$ is a perfect square. Thus $2n-1 \equiv 1 \mod 8$, and it follows that $n \equiv 1 \mod 4$.

By Proposition \ref{mod4}, we may assume that all entries in $MM^{\top}$ are congruent to $1 \mod 4$. In particular, the off-diagonal
entries belong to the
set $\{ \ldots, -7, -3, 1, 5, \ldots \}$. Theorem \ref{Wojtas} applies with $b = 1$ if and only if all off-diagonal entries are equal
to $1$. The matrices attaining the bound are characterised in Corollary \ref{daddydidit}.
\end{proof}

Section \ref{RonanMagic} contains an explicit construction for an infinite family of matrices satisfying the conditions of Theorem \ref{1mod4}.
Before describing that construction, we give an easy construction for \textit{near-maximal} determinants (i.e., determinants within a constant
factor of the bound). Define the \textit{excess} of a Hadamard matrix to be the sum of its entries.

\begin{proposition}[\cite{FarmakisKounias,RagPes}] \label{FK}
Let $H$ be a Hadamard matrix of order $n-1$, with excess $e(H)$.
Then
\[ M =  \begin{pmatrix} H & \mathbf{1} \\ -\mathbf{1}^{\top} & 1 \end{pmatrix} \]
satisfies $\det(M) = \det(H) ( 1 + e(H)n^{-1} )$.
\end{proposition}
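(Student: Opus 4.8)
The plan is to compute $\det(M)$ directly by a block-matrix manipulation that reduces the order-$n$ determinant to the order-$(n-1)$ determinant $\det(H)$ times a scalar correction. First I would write $M$ in the displayed block form with $H$ in the top-left, the column $\mathbf{1}$ of length $n-1$ in the top-right, the row $-\mathbf{1}^{\top}$ in the bottom-left, and the entry $1$ in the bottom-right. Since $H$ is a Hadamard matrix of order $n-1$ it is invertible (indeed $H^{-1} = (n-1)^{-1}H^{\top}$), so the Schur complement formula applies: with the bottom-right block of size $1$, one has
\[
\det(M) = \det(H)\,\bigl(1 - (-\mathbf{1}^{\top}) H^{-1} \mathbf{1}\bigr) = \det(H)\,\bigl(1 + \mathbf{1}^{\top} H^{-1} \mathbf{1}\bigr).
\]

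Next I would evaluate the scalar $\mathbf{1}^{\top} H^{-1}\mathbf{1}$ using $H^{-1} = (n-1)^{-1} H^{\top}$. This gives $\mathbf{1}^{\top} H^{-1}\mathbf{1} = (n-1)^{-1}\,\mathbf{1}^{\top} H^{\top} \mathbf{1} = (n-1)^{-1}\,(H\mathbf{1})^{\top}\mathbf{1}$. Now $(H\mathbf{1})^{\top}\mathbf{1} = \mathbf{1}^{\top} H^{\top}\mathbf{1}$ is precisely the sum of all entries of $H$, which is by definition the excess $e(H)$. Hence $\mathbf{1}^{\top} H^{-1}\mathbf{1} = e(H)/(n-1)$, and substituting back yields
\[
\det(M) = \det(H)\Bigl(1 + \frac{e(H)}{n-1}\Bigr).
\]
This differs cosmetically from the stated formula $\det(H)(1 + e(H)n^{-1})$; since the excess $e(H)$ is defined relative to the order $n-1$ of $H$, I would double-check the intended normalisation — it is likely that the paper's convention has the $+1$ absorbed so that the denominator is $n$, or that $e(H)$ is rescaled; in any case the algebra above pins down the exact constant, and I would present whichever matches the paper's definition of excess.

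An alternative, completely elementary route avoids inverses: add the last row of $M$ to a suitable combination of the others, or rather perform column operations — add all of the first $n-1$ columns to the last column is not quite right, but adding $(n-1)^{-1}$ times $H^{\top}$ applied appropriately reproduces the Schur complement. Cleanest is to observe that $M$ times the block matrix $\left(\begin{smallmatrix} I & -H^{-1}\mathbf{1} \\ 0 & 1\end{smallmatrix}\right)$ equals $\left(\begin{smallmatrix} H & 0 \\ -\mathbf{1}^{\top} & 1 + \mathbf{1}^{\top}H^{-1}\mathbf{1}\end{smallmatrix}\right)$, and take determinants. The only mild obstacle is bookkeeping the sign and the normalisation of the excess; there is no real difficulty, as the computation is a one-step Schur complement combined with the identity $\mathbf{1}^{\top}H^{\top}\mathbf{1} = e(H)$.
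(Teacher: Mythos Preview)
Your proof is correct and is essentially the paper's own argument: the paper also invokes the Schur complement formula $\det(M)=\det(H)\bigl(1-(-\mathbf{1}^{\top})H^{-1}\mathbf{1}\bigr)$ together with $\mathbf{1}^{\top}H\mathbf{1}=e(H)$, without writing out the intermediate step $H^{-1}=(n-1)^{-1}H^{\top}$.

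Your suspicion about the normalisation is well founded: the computation genuinely gives $1+e(H)/(n-1)$, not $1+e(H)/n$, and this is confirmed by the paper's own use of the result immediately afterwards (with a regular Hadamard matrix of order $4t^{2}$ and excess $8t^{3}$, the formula yields $(2t+1)(4t^{2})^{2t^{2}}$, which matches $1+8t^{3}/(4t^{2})$ and not $1+8t^{3}/(4t^{2}+1)$). So the $n^{-1}$ in the displayed statement is a typo for $(n-1)^{-1}$; your algebra is the correct one.
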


\begin{proof}
This follows directly from the Schur complement formula (Section 0.8, \cite{HornJohnson}). For any block matrix in which $A$ is invertible,
\[ \begin{pmatrix} I & \mathbf{0} \\ -CA^{-1} & I \end{pmatrix} \begin{pmatrix} A & B \\ C& D \end{pmatrix}
\begin{pmatrix} I & -A^{-1}B \\ \mathbf{0} & I \end{pmatrix} = \begin{pmatrix} A & \mathbf{0} \\ \mathbf{0} & D - CA^{-1}B \end{pmatrix}.\]
Apply this result to $M$, observing that $\mathbf{1}^{\top}H\mathbf{1} = e(H)$.
\end{proof}

It is well known that the maximal excess of a Hadamard matrix of order $n$ is bounded above by $n\sqrt{n}$,
and that equality is achieved if and only if $n = 4t^{2}$ is the square of an even integer, and every row has sum~$2t$~\cite{Best77}. A~Hadamard matrix with constant row sums is called \emph{regular} in the literature.
If there exists such a Hadamard matrix\footnote{A regular Hadamard matrix necessarily has square order,
and is equivalent to the existence of a so-called \textit{Menon--Hadamard} $2$-design. Designs will be discussed further in Section \ref{RonanMagic}.}
then Proposition \ref{FK} gives a matrix of order $4t^{2}+1$ with determinant $(2t+1) (4t^{2})^{2t^{2}}$. This should be compared to the bound of
Corollary \ref{Barba}: upon making the substitution $n = 4t^{2} + 1$ we obtain the bound $\det(M) \leq \sqrt{8t^{2} +1} (4t^{2})^{2t^{2}}$.
Comparing $(2t+1)$ to $\sqrt{8t^{2}+ 1}$ we see that this determinant exceeds $1/\sqrt{2}$ of the Barba bound (and indeed is somewhat better for
small values of $t$). Constructions for infinite families of regular Hadamard matrices are known: there exist regular Hadamard matrices of order $4q^{4}$ for every odd prime power~$q$, and there exists a regular Hadamard matrix of order $16n^{2}$ whenever there exists a Hadamard matrix of order~$4n$ \cite{Meisner,MuzychukXiang}. Orrick and Solomon \cite{OrrickSolomon} have developed a normalisation technique which suggests that Hadamard matrices with large excess are relatively common.

\subsection{Designs and the Brouwer--Whiteman construction}\label{RonanMagic}

In this section, we construct a matrix of order $2p^2 +2p + 1$ satisfying the conditions of Theorem \ref{1mod4}, where
$p \equiv 3 \mod 4$ is prime. This result was obtained independently by Brouwer \cite{Brouwer} and by Whiteman \cite{Whiteman}.
The construction extends readily to all odd prime powers. For the general case, we refer the reader to the work of Neubauer and Radcliffe \cite{NeubauerRadcliffe}. We begin this section by introducing the matrices $I, J$ and $C$ and establishing some of their basic properties.
In Propositions \ref{affine} and \ref{NR2} we combine these ingredients to form large sets of orthogonal vectors in dimensions $p^{2}$ and $p^{2} + 2p$ respectively. Then in Theorem \ref{NRthm}, we add a single row and column to these matrices to yield a maximal determinant matrix in dimension $2p^{2} + 2p + 1$.

Recall that $I$ and $J$ denote the identity and all-ones matrix respectively, where the dimension is clear from context. Let $j_{m}$ denote the row vector of length $m$
with all entries equal to $1$. A useful observation is that for any matrix $M$, the entries of $JM$ are the column sums of $M$ while
the entries of $MJ$ are the row sums of $M$.

Let $Q$ be the $p \times p$ Paley core of Proposition \ref{paleycore}, and let $C = Q-I$. The reader
should verify that $C$ has all entries in $\{\pm 1\}$ and, since $p \equiv 3 \mod 4$, that $Q$ is skew-symmetric, and
\[ CC^{\top} = (Q - I)(Q-I)^{\top} = QQ^{\top} - Q - Q^{\top} + I = (p+1)I - J \,.\]
It follows from Proposition \ref{paleycore} that $JC = CJ = -J$.

Finally, define the tensor product $A \otimes B = [a_{i,j}B]_{i,j}$. Provided the matrices have compatible dimensions,
matrix multiplication distributes over the tensor product: $(A \otimes B) (M \otimes N) = AM \otimes BN$.
We will require some well-known results from the theory of combinatorial designs in this section; for
further information the reader is directed to the monograph of Beth, Jungnickel and Lenz \cite{BJL}.

\begin{definition}
Let  $V$ be a set of size $v$ whose elements are called \textit{points}, and a set $B$ of
\textit{blocks}, each of which is a $k$-subset of $V$.
The pair $(V, B)$ is a $2$-$(v,k,\lambda)$ design if each pair of points is contained in
precisely $\lambda$ blocks. An incidence matrix $M$ of the design $(V,B)$
has rows labelled by points, columns labelled by blocks and $m_{v,b} = 1$ if $v \in b$ and $0$ otherwise.
A matrix $M$ with entries $\{0,1\}$ is the incidence matrix
of a $2$-design if and only if
\[ MM^{\top} = (k-\lambda)I_{v} + \lambda J_{v}.\]
\end{definition}

The affine designs are an important family of $2$-designs obtained from vector spaces over finite fields.

\begin{definition}
Let $U$ be a vector space of dimension $2$ over $\mathbb{F}_{p}$. Let $V$ be the set of vectors of $U$ and
$B$ be the set of $1$-dimensional subspaces and their translates. Since any two vectors determine a unique line,
$(V, B)$ is a $2$-$(q^{2}, q, 1)$ design. The incidence matrix is $q^{2} \times (q^{2} + q)$, and can be partitioned into $q+1$ \textit{parallel classes}:
sets of blocks which partition the point set.
\end{definition}

Let us be a little more explicit in our description of the affine plane: parallel classes consist of pencils of parallel lines in the plane.
One pencil consists of ``vertical'' lines, which are all of the form $\{ (c, x) : x \in \mathbb{F}_{p}\}$ for fixed $c \in \mathbb{F}_{p}$.
The remaining lines consist of point-sets of the form $\{ (x, ax+b) : x \in \mathbb{F}_{p}\}$ for some $a, b \in \mathbb{F}_{p}$.
The parallel classes are obtained by fixing $a$ and varying $b$.

The incidence matrix of the affine plane has $p^{2}$ rows and $p^{2} + p$ columns. We will assume that the columns are grouped into $p+1$ parallel classes. By elementary linear algebra, each $p^{2} \times p$ submatrix contains a unique $1$ in each row, and $p$ non-zero entries in each column. Denote this matrix by $M_{p}$, and observe that $M_{p}M_{p}^{\top} = pI + J$.

\begin{proposition} \label{affine}
Let $M_{p}$ be the incidence matrix of the affine plane of order $p$ and let $C = Q-I$ be the Paley core of order $p$.
Then $M = M_{p} \left( I_{p+1} \otimes C\right)$ is a $p^{2} \times (p^{2}+p)$ matrix with entries in $\{ \pm 1\}$ which satisfies
\[ MM^{\top} = p^{2} I_{p^{2}}.\]
Each row of $C$ occurs $p$ times in each column-block of $M$. Each row of $C$ occurs at least once in each row of $M$.
It will be convenient to write $M$ as a block matrix, which we denote $[ M_{0} \mid M_{1} ]$ where $M_{0}$ consists of a single parallel class.
\end{proposition}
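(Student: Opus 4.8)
The plan is to carry the parallel-class block structure of $M$ through every step. Write $M_{p} = [B_{0}\mid B_{1}\mid\cdots\mid B_{p}]$, where $B_{i}$ is the $p^{2}\times p$ incidence matrix of the $p^{2}$ points of the affine plane against the $p$ lines of its $i$-th parallel class. Since $I_{p+1}\otimes C$ is block diagonal with $p+1$ diagonal blocks equal to $C$, this gives $M = [B_{0}C\mid B_{1}C\mid\cdots\mid B_{p}C]$, so every column-block of $M$ has the shape $B_{i}C$; taking $M_{0} = B_{0}C$ and grouping the rest as $M_{1}$ furnishes the decomposition $M = [M_{0}\mid M_{1}]$.

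I would then read the structural claims off the elementary properties of $B_{i}$: each point lies on exactly one line of each parallel class, so every row of $B_{i}$ is a standard unit row vector, and each line contains $p$ points, so every column of $B_{i}$ has exactly $p$ ones. Hence row $P$ of $B_{i}C$ is precisely the row of $C$ indexed by the line of the $i$-th parallel class through $P$. In particular every entry of $M$ is an entry of $C$, so $M$ has entries in $\{\pm1\}$; and for fixed $i$ the $p$ points on a single line of that class all produce the same row of $C$, so each row of $C$ occurs exactly $p$ times in the column-block $B_{i}C$, hence in every column-block of $M$, and a fortiori in $M$.

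For the Gram matrix I would first note that each point of the affine plane meets each parallel class in exactly one line, so $M_{p}(I_{p+1}\otimes j_{p}^{\top})$ is the $p^{2}\times(p+1)$ all-ones matrix --- its $(P,i)$ entry counts the lines of parallel class $i$ through $P$ --- where $j_{p}$ is the all-ones row vector of length $p$; since $I_{p+1}\otimes J_{p} = (I_{p+1}\otimes j_{p}^{\top})(I_{p+1}\otimes j_{p})$, this gives $M_{p}(I_{p+1}\otimes J_{p})M_{p}^{\top} = (p+1)J_{p^{2}}$. Using this together with $CC^{\top} = (p+1)I_{p}-J_{p}$ and the incidence identity $M_{p}M_{p}^{\top} = pI+J$,
\begin{align*}
MM^{\top} = M_{p}\bigl(I_{p+1}\otimes CC^{\top}\bigr)M_{p}^{\top}
&= (p+1)\,M_{p}M_{p}^{\top} - M_{p}\bigl(I_{p+1}\otimes J_{p}\bigr)M_{p}^{\top}\\
&= (p+1)(pI_{p^{2}}+J_{p^{2}}) - (p+1)J_{p^{2}} = p(p+1)I_{p^{2}}.
\end{align*}
This is a scalar multiple of $I_{p^{2}}$, and as each row of $M$ is a $\pm1$ vector of length $p^{2}+p$ the scalar must equal $p^{2}+p$; in particular the rows of $M$ are pairwise orthogonal.

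I do not anticipate a genuine obstacle: the argument is short once the notation is fixed. The only points that call for care are bookkeeping ones --- keeping the $0/1$ incidence pattern of a single parallel class (one $1$ per row, $p$ ones per column) cleanly separated from the tensor factor $C$, and recognising that the $I_{p+1}\otimes J_{p}$ term collapses precisely because each point meets each parallel class in exactly one line.
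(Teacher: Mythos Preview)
Your argument is correct, and in fact it exposes a typo in the statement and in the paper's proof: since $M$ is a $\{\pm1\}$ matrix with $p^{2}+p$ columns, the diagonal entries of $MM^{\top}$ are $p^{2}+p$, and your computation $MM^{\top}=p(p+1)I_{p^{2}}$ is the right answer. The paper's proof makes the same slip (``the diagonal entries of $MM^{\top}$ are all $p^{2}$'') while correctly establishing off-diagonal orthogonality.

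The route you take is a clean global matrix computation: you reduce $MM^{\top}$ to $(p+1)M_{p}M_{p}^{\top} - M_{p}(I_{p+1}\otimes J_{p})M_{p}^{\top}$ and then identify the second term as $(p+1)J_{p^{2}}$ via the observation that $M_{p}(I_{p+1}\otimes j_{p}^{\top})$ is the all-ones $p^{2}\times(p+1)$ matrix. The paper instead argues row by row: two distinct points lie on a common line in exactly one parallel class (the $2$-design property), contributing $+p$ to the inner product, and lie on distinct lines in each of the remaining $p$ classes, each contributing $-1$ (the off-diagonal entry of $CC^{\top}$), giving $p - p = 0$. Your version is more compact and makes the role of the identity $M_{p}M_{p}^{\top}=pI+J$ explicit; the paper's version is more combinatorial and perhaps makes the geometry clearer. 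Both rest on the same two facts.

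One small omission, shared with the paper: neither you nor the paper verifies the auxiliary claim that ``each row of $C$ occurs at least once in each row of $M$''. In fact this depends on how the lines within each parallel class are labelled, and with the natural labelling it can fail (e.g., the point $(0,0)$ lies on the line $b=0$ in every non-vertical class). Fortunately the later application in the paper only needs the weaker fact --- which you do prove --- that each row of $M$ is a concatenation of rows of $C$.
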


\begin{proof}
Consider the $p^{2} \times p$ submatrix $F$ of $M_{p}$ corresponding to the $i^{\textrm{th}}$ parallel class. The corresponding block of $M$ is just $FC$. Since each row of $F$ contains a single $1$, every row of $FC$ is just a row of $C$. Hence the entries of $M$ all belong to $\pm 1$, and the diagonal entries of $MM^{\top}$ are all $p^{2}$.

By the $2$-design property, any pair of points are contained in a unique block, so the inner product
of two rows in $M_{p}$ is $1$. Hence for any two distinct rows of $M$, there is a unique parallel class in
which they have the same row of $C$. In all other parallel classes they differ. Hence, the inner product
gains a $+p$ term from the parallel class where they agree,
and $p$ terms $-1$ from the parallel classes
in which they disagree, and every pair of rows is orthogonal.
\end{proof}

The next proposition, like the previous one, constructs a large set of orthogonal vectors with rows drawn from $J$ and $C$.

\begin{proposition} \label{NR2}
Let $C$ be the Paley core of order $p$, where $p \equiv 3 \mod 4$. Let $J$ be the all-ones matrix of order~$p$, and let $j_{p}$ be a vector of ones of length $p$.
Then the $(p^{2} + 2p) \times (2p^{2} + 2p)$ matrix
\[
N =
\advance\arraycolsep3pt
\begin{bmatrix}
 -J 					& -C \otimes j_{p} & J & (C+2I) \otimes j_{p} \\
 -j_{p}^{\top} \otimes C 	& -(C+I)\otimes C + I \otimes J & j_{p}^{\top} \otimes C & (C+I)\otimes C + I \otimes J \\
 + J & j_{p} \otimes C 	& -J & j_{p} \otimes C
\end{bmatrix} \]
satisfies $NN^{\top} = (2p^{2} + 2p) I_{p^{2} + 2p}$.
\end{proposition}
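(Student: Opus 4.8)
The plan is to verify directly that $NN^{\top} = (2p^{2}+2p)I$ by computing the $3\times 3$ block structure of $NN^{\top}$ and checking that the off-diagonal blocks vanish while the diagonal blocks equal the required scalar multiple of the identity. Write $N = [N_{1}\mid N_{2}\mid N_{3}\mid N_{4}]$ for the four column-blocks, and label the three row-blocks (of sizes $p$, $p^{2}$, $p$) so that $NN^{\top}$ is a symmetric array of nine blocks, the $(r,s)$ block being $\sum_{t=1}^{4}(N_{t})_{r}(N_{t})_{s}^{\top}$. The key algebraic facts available to us are: $CC^{\top} = (p+1)I - J$, $C^{\top}=-C$ (since $p\equiv 3\bmod 4$), $JC = CJ = -J$, $J^{2}=pJ$, and the tensor identity $(A\otimes B)(M\otimes N)^{\top} = AM^{\top}\otimes BN^{\top}$. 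I would also precompute the two expressions that recur throughout: $(C+I)(C+I)^{\top} = CC^{\top} + C + C^{\top} + I = (p+2)I - J$ and $(C+2I)(C+2I)^{\top} = CC^{\top} + 2C + 2C^{\top} + 4I = (p+5)I - J$, together with $(C+I)(C+2I)^{\top} = CC^{\top} + 2C + C^{\top} + 2I = (p+3)I - J + C$.

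First I would handle the easy blocks. The $(1,1)$ block is $J J^{\top} + (C\otimes j_{p})(C\otimes j_{p})^{\top} + JJ^{\top} + ((C+2I)\otimes j_{p})((C+2I)\otimes j_{p})^{\top}$; using $(X\otimes j_{p})(Y\otimes j_{p})^{\top} = XY^{\top}\otimes (j_{p}j_{p}^{\top}) = p\,XY^{\top}$ this is $pJ + p\,CC^{\top} + pJ + p\,(C+2I)(C+2I)^{\top} = pJ + p((p+1)I-J) + pJ + p((p+5)I-J) = p(2p+6)I - 0\cdot J$... I would then check the arithmetic carefully: the $J$-terms are $pJ + (-pJ) + pJ + (-pJ)$, wait — one must track signs from the $-J$ and $+J$ entries, but since each block is squared the signs cancel, giving $2pJ + p((p+1)I-J) + p((p+5)I-J)$, and the $J$ coefficients are $2p - p - p = 0$, leaving $(2p^{2}+6p)I$. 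Since the row-block has size $p$, this should be $(2p^{2}+2p)I_{p}$, so I expect a correction: most likely the intended reading makes the $(1,1)$ block come out right once the $\otimes j_{p}$ convention (row versus column vector) is pinned down, and I would state that convention explicitly at the start. The $(3,3)$ block is entirely analogous, and the $(1,3)$ block involves cross terms $-JJ^{\top} + (C\otimes j_{p})(C\otimes j_{p})^{\top} - JJ^{\top} + ((C+2I)\otimes j_{p})(C\otimes j_{p})^{\top}$, which I would show telescopes to $\mathbf{0}$.

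The substantive computation is the central $(2,2)$ block and the two mixed blocks $(1,2)$ and $(2,3)$, where tensor products of the form $X\otimes C$ and $(C+I)\otimes C + I\otimes J$ interact. For $(2,2)$ I would expand $(j_{p}^{\top}\otimes C)(j_{p}^{\top}\otimes C)^{\top} + ((C+I)\otimes C + I\otimes J)(\cdots)^{\top} + (j_{p}^{\top}\otimes C)(\cdots)^{\top} + ((C+I)\otimes C + I\otimes J)(\cdots)^{\top}$, using $(j_{p}^{\top}\otimes C)(j_{p}^{\top}\otimes C)^{\top} = (j_{p}^{\top}j_{p})\otimes CC^{\top} = p\,CC^{\top}$ wait $j_p^\top j_p = p$ is a scalar so this is $p((p+1)I-J)$ — I'd need the dimensions to match $p^2\times p^2$, so again the tensor bookkeeping (is it $j_p\otimes C$ an $p\times p^2$ block? it must be $p^2\times p^2$ after the product) needs care. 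The cross term $((C+I)\otimes C)(I\otimes J)^{\top} = (C+I)\otimes CJ^{\top} = (C+I)\otimes(-J) $ must cancel against its partner with the opposite sign coming from the $-(C+I)\otimes C + I\otimes J$ versus $+(C+I)\otimes C + I\otimes J$ entries. I expect the main obstacle to be exactly this: managing the sign pattern across the four column-blocks so that all the $J$-contributions, $C$-contributions, and mixed $(C+I)\otimes C$-cross-terms cancel, leaving only the scalar $(2p^{2}+2p)$ on the diagonal. Once the cancellations in the $(2,2)$ block are verified, the $(1,2)$ and $(2,3)$ blocks follow by the same manipulations (each is a telescoping sum in which the two "$J$-type" columns and the two "$C$-type" columns pair off with opposite signs), and the proof is complete. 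I would organize the write-up as a lemma collecting the identities for $CC^{\top}$, $(C+I)(C+I)^{\top}$, $(C+2I)(C+2I)^{\top}$, $JC$, $J^{2}$, followed by a block-by-block check, remarking that symmetry of $NN^{\top}$ halves the work.
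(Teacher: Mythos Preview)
Your overall approach---direct block-by-block verification of $NN^{\top}$ using the algebra of $C$, $J$, and tensor products---is exactly what the paper does, and the organisation you propose (collect the basic identities, then check diagonal and off-diagonal blocks, using symmetry to halve the work) is sound.

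However, there is one genuine error in your list of ``key algebraic facts'' that propagates through everything and is the real reason your $(1,1)$ block came out as $(2p^{2}+6p)I$ rather than $(2p^{2}+2p)I$. You assert $C^{\top}=-C$. This is false: here $C=Q-I$ with $Q$ the (zero-diagonal) Paley core, and it is $Q$ that is skew-symmetric when $p\equiv 3\bmod 4$. Thus $C^{\top}=Q^{\top}-I=-Q-I$, so the correct identity is
\[
C+C^{\top}=-2I,
\]
not $C+C^{\top}=0$. With this correction your precomputed expressions become
\[
(C+I)(C+I)^{\top}=pI-J,\qquad (C+2I)(C+2I)^{\top}=(p+1)I-J,\qquad (C+I)(C+2I)^{\top}=(p+1)I-J+C,
\]
and the $(1,1)$ block is then $2pJ+p\bigl((p+1)I-J\bigr)+p\bigl((p+1)I-J\bigr)=2p(p+1)I$, exactly as required. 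The discrepancy you noticed is therefore not a tensor-product convention issue at all; your tensor bookkeeping was fine. Once you replace $C^{\top}=-C$ by $C+C^{\top}=-2I$ throughout, the remaining block verifications go through just as you outlined, and your proof matches the paper's.
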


\begin{proof}
Essentially, the proof reduces to computing $NN^{\top}$ and carefully evaluating each of the terms.
Let us compute the inner product of the first block of the matrix with itself (equivalently, the inner product of any
two rows from the first block).
First observe that $N$ is a $\{\pm 1\}$ matrix, so the diagonal of $NN^{\top}$ is as claimed. Recall that $CC^{\top} = (p+1)I - J$,
and that since $p \equiv 3 \mod 4$, we have $C + C^{\top} = -2I$.
\begin{align*}
N_{1}N_{1}^{\top} & =
\begin{bmatrix}
-J & -C \otimes j_{p} & J & (C+2I) \otimes j_{p} \end{bmatrix}
\begin{bmatrix} -J & -C \otimes j_{p} & J & (C+2I) \otimes j_{p} \end{bmatrix}^{\top} \\
& =  JJ^{\top} + (C \otimes j_{p} )(C \otimes j_{p})^{\top} + JJ^{\top} + ((C+2I) \otimes j_{p}) ( (C+2I) \otimes j_{p})^{\top}\\
& =  2pJ + CC^{\top} \otimes j_{p}j_{p}^{\top} + (C+2I)(C+2I)^{\top} \otimes  j_{p}j_{p}^{\top}\\
& =  2pJ + p CC^{\top} + p( CC^{\top} + 2C + 2C^{\top} + 4I) \\
& =  2pJ + p( (p+1)I -J ) + p((p+1) I - J)\\
& =  2p(p+1)I.
\end{align*}
In particular, we conclude that two distinct rows from this block are orthogonal. We now verify the orthogonality of rows from two distinct blocks. To perform this computation by hand, it is convenient to simplify each term in the product individually, using that $j_{p} \otimes J = J \otimes j_{p}$, and that $J( j_{p} \otimes C) = j_{p} \otimes JC = - J\otimes j_{p}$:
\def\mk{\mkern-1.7mu}
\begin{align*}
N_{1}N_{2}^{\top} & =
\advance\arraycolsep0.85pt
\begin{bmatrix}
-J & -C \mk\otimes\mk j_{p} & J & (C\mk+\mk2I)\mk \otimes\mk j_{p} \end{bmatrix}\mk
\begin{bmatrix} - j_{p}^{\top}\mk \otimes\mk C & -(C\mk+\mk I)\mk\otimes\mk C\mk +\mk I\mk \otimes\mk J & j_{p}^{\top}\mk \otimes\mk C & (C\mk+\mk I)\mk\otimes\mk C\mk +\mk I\mk \otimes\mk J \end{bmatrix}^{\!\top} \\
& =  - J \otimes j_{p} + ( J - (p+1)I - (p+1) C ) \otimes j_{p} - J \otimes j_{p} + ( J + (p-3)I + (p-1)C - 2C^{\top})\otimes j_{p} \\
& =  -2 J \otimes j_{p} + ( J - (p+1)I - (p+1) C ) \otimes j_{p} + ( J + (p-3) I + (p+1)C - 2C - 2C^{\top} ) \otimes j_{p} \\
& =  (- (p+1)I +(p-3)I -(p+1)C+(p+1)C+4I ) \otimes j_{p}\\
& =  \textbf{0} \otimes j_{p}.
\end{align*}
The remaining verifications are similar and are left for the reader.
\end{proof}

In Propositions \ref{affine} and \ref{NR2}, the assumption that $p \equiv 3 \mod 4$ is necessary.
Using the affine plane, we constructed $p^{2}$ pairwise orthogonal vectors with entries $\{\pm 1\}$ in dimension $p^{2} + p$. For primes $p \equiv 1 \mod 4$ this is impossible, by Proposition \ref{Hadbasics}. Using tensor products, we constructed $p^{2} + 2p$ orthogonal vectors in dimension $2p^{2} + 2p$. To complete our construction of maximal determinant matrices, we assemble $M$ and $N$ into a square matrix of dimension $(p+1)^{2} + p^{2}$.

\begin{theorem}\label{NRthm}
Let $W$ be the following matrix, assembled from the matrices of Propositions \ref{affine} and \ref{NR2} with a single row and column appended:
\begin{equation}\label{NR-mat}
\arraycolsep10pt
\def\arraystretch{1.3}
W =
\begin{bmatrix}
1 			& j_{p} 				& -j_{p^2} 					& j_{p} 		& j_{p^2} \\
j_{p}^{\top}	& -J 					& -C \otimes j_{p} 			& J 			& (C+2I) \otimes j_{p} \\
j_{p^2}^{\top}	& -j_{p}^{\top} \otimes C 	& -(C+I)\otimes C + I \otimes J 	& j_{p}^{\top} \otimes C & (C+I)\otimes C + I \otimes J \\
j_{p}^{\top} 	& J 					& -j_{p} \otimes C 			& J 			& j_{p} \otimes C \\
-j_{p^2}^{\top} 	& -M_{0} 				& -M_{1} 					& -M_{0} 		& -M_{1}
\end{bmatrix}\!.
\end{equation}
Then $WW^{\top} = (2p^{2} + 2p) I + J$, and so $W$ is a maximal determinant matrix. Furthermore, $W$ has constant row sums $2p+1$.
\end{theorem}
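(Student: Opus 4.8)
The plan is to strip off the appended first row and column of $W$ and reduce everything to the Gram identities already proved for $N$ (Proposition~\ref{NR2}) and $M$ (Proposition~\ref{affine}), leaving exactly one genuinely new cross-term. Set $n=2p^{2}+2p+1$ and write
\[
W=\begin{bmatrix} 1 & r\\ c & B\end{bmatrix},
\]
where $r$ is the first row of $W$ with its leading entry deleted, $c$ is the first column similarly truncated, and $B$ is the $(n-1)\times(n-1)$ interior. The crucial bookkeeping point is that the four column-blocks of $W$ past the first have widths $(p,p^{2},p,p^{2})$, which align simultaneously with the column-blocks of $N$ and with those of the bottom strip $[-M_{0}\mid -M_{1}\mid -M_{0}\mid -M_{1}]$; writing this strip as $-\widetilde M$ with $\widetilde M=[M_{0}\mid M_{1}\mid M_{0}\mid M_{1}]$, the matrix $B$ is $N$ stacked on $-\widetilde M$, and
\[
WW^{\top}=\begin{bmatrix} 1+rr^{\top} & c^{\top}+rB^{\top}\\[2pt] c+Br^{\top} & cc^{\top}+BB^{\top}\end{bmatrix},\qquad
BB^{\top}=\begin{bmatrix} NN^{\top} & -N\widetilde M^{\top}\\[2pt] -\widetilde M N^{\top} & \widetilde M\widetilde M^{\top}\end{bmatrix}.
\]
It suffices to match each block against $(n-1)I_{n}+J_{n}$.

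Most blocks are inherited. As $r$ is a $\{\pm1\}$ row of length $n-1$, the corner is $1+rr^{\top}=n$. On the diagonal of $BB^{\top}$, $NN^{\top}=(n-1)I$ by Proposition~\ref{NR2}, while $\widetilde M\widetilde M^{\top}=2(M_{0}M_{0}^{\top}+M_{1}M_{1}^{\top})=2MM^{\top}=(n-1)I$, the last step using that $M$ has orthogonal rows and $p^{2}+p$ columns (Proposition~\ref{affine}). For the border I evaluate $Br^{\top}$: using $JC=CJ=-J$ (so $Cj_{p}^{\top}=-j_{p}^{\top}$ and $(C+I)j_{p}^{\top}=\mathbf 0$) together with the alternating signs of $r^{\top}$, the contributions from column-blocks $1$ and $3$ of each row-block of $N$ cancel in pairs, and those from blocks $2$ and $4$ combine, after writing $j_{p^{2}}^{\top}=j_{p}^{\top}\otimes j_{p}^{\top}$, to expressions annihilated by $(C+I)j_{p}^{\top}=\mathbf 0$; hence $Nr^{\top}=\mathbf 0$. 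On the other hand $\widetilde M r^{\top}=2M_{0}j_{p}^{\top}=-2j_{p^{2}}^{\top}$, since $M_{0}=F_{0}C$ with $F_{0}$ the $0/1$ incidence matrix of a single parallel class (one $1$ per row) gives $M_{0}j_{p}^{\top}=F_{0}(Cj_{p}^{\top})=-F_{0}j_{p}^{\top}=-j_{p^{2}}^{\top}$. Thus $rB^{\top}=(\mathbf 0,\,2j_{p^{2}})$ in the splitting of the $n-1$ coordinates by the rows of $N$ and of $\widetilde M$, while $c^{\top}=(j_{p^{2}+2p},-j_{p^{2}})$ in the same splitting, so $c^{\top}+rB^{\top}$ is the all-ones row. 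Finally $cc^{\top}$ is the rank-one matrix equal to $J$ on each diagonal block and to $-J$ on each off-diagonal block of the $(p^{2}+2p)\mid p^{2}$ split.

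The one non-routine identity is $N\widetilde M^{\top}=-2J$: every row of $N$ must meet every row of $\widetilde M$ in inner product $-2$ (equivalently, since the first-column entry of $W$ is $+1$ on each $N$-row and $-1$ on each $\widetilde M$-row, every $N$-row of $W$ meets every $\widetilde M$-row of $W$ in inner product $1$). Writing $N_{[k]}$ for the $k$-th column-block of $N$, one has
\[
N\widetilde M^{\top}=(N_{[1]}+N_{[3]})M_{0}^{\top}+(N_{[2]}+N_{[4]})M_{1}^{\top},
\]
and $N_{[1]}+N_{[3]}=\mathbf 0$ by inspection, so only $(N_{[2]}+N_{[4]})M_{1}^{\top}$ survives. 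This residual product I would evaluate directly, writing $M_{1}=[F_{1}C\mid\cdots\mid F_{p}C]$ for the remaining $p$ parallel classes and collapsing everything via $CC^{\top}=(p+1)I-J$, $JC=CJ=-J$, the incidence relations $F_{i}JF_{i}^{\top}=J$, and $\sum_{i}F_{i}F_{i}^{\top}=M_{p}M_{p}^{\top}=pI+J$. This is the step I expect to be the main obstacle: it is the only identity not handed over by Propositions~\ref{affine} and~\ref{NR2}, and it forces one to mesh the Kronecker structure of $N$ with the parallel-class structure underlying $M_{0}$ and $M_{1}$. Granting it, $cc^{\top}$ contributes $+J$ on each diagonal block and $-J$ on each off-diagonal block of $BB^{\top}+cc^{\top}$, so $BB^{\top}+cc^{\top}=(n-1)I+J$, and with the corner and border computations $WW^{\top}=(n-1)I+J=(2p^{2}+2p)I+J$.

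It remains to record the two consequences. Here $2n-1=4p^{2}+4p+1=(2p+1)^{2}$ is a perfect square and $n\equiv1\bmod4$ (because $p(p+1)$ is even), so $W$ is exactly of the shape characterised in Theorem~\ref{1mod4}; concretely $\det(WW^{\top})=\det\bigl((n-1)I+J\bigr)=(2n-1)(n-1)^{n-1}$, whence $|\det W|=\sqrt{2n-1}\,(n-1)^{(n-1)/2}$, which is the Barba bound of Corollary~\ref{Barba}, so $W$ is a maximal determinant matrix. For the constant row sum, a direct block-by-block count suffices, using that the rows of $C$ sum to $-1$ (from $CJ=-J$), that each $F_{i}$ has row sums $1$, and hence that the rows of $M_{0}$ and $M_{1}$ sum to $-1$ and $-p$ respectively: the appended first row sums to $1+p-p^{2}+p+p^{2}=2p+1$, and each of the four interior row-blocks of $W$ likewise sums to $2p+1$.
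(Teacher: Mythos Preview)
Your approach is essentially the paper's, reorganised as a $2\times 2$ block computation of $WW^\top$ rather than as pairwise inner products of rows; the border and diagonal blocks are handled correctly (with $MM^\top=(p^{2}+p)I$, so $\widetilde M\widetilde M^{\top}=(n-1)I$, silently correcting a typo in Proposition~\ref{affine}).

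The step you flag as the main obstacle is easier than you anticipate, and the tool you propose is heavier than needed. For the first two row-blocks of $N$, the sums $N_{[2]}+N_{[4]}$ collapse to $2I\otimes j_{p}$ and $2(I\otimes J)$ respectively, and then $(N_{[2]}+N_{[4]})M_{1}^{\top}=-2J$ follows from a single fact: every length-$p$ block of a row of $M_{1}$ is a row of $C$ and hence sums to $-1$. No appeal to $CC^{\top}=(p+1)I-J$ or to $\sum_{i}F_{i}F_{i}^{\top}=pI+J$ is required. This is precisely the paper's method: it reduces each inner product to repeated row-sums of $C$, doing one block as a worked example and leaving the rest as routine.

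One genuine caution. Your claim $N_{[1]}+N_{[3]}=\mathbf 0$ ``by inspection'' fails for the fourth row-block of $W$ as displayed, which reads $(J,\,-j_{p}\otimes C,\,J,\,j_{p}\otimes C)$ and does \emph{not} coincide with the third row-block of $N$ in Proposition~\ref{NR2}; the paper's prose says they match, so there is a typo in one of the two displays. With $W$ as written, it is blocks $2$ and $4$ that cancel in that row, leaving $2JM_{0}^{\top}=-2J$ by the same row-sum argument. If instead you trust $N$ from Proposition~\ref{NR2}, then for that block $(N_{[2]}+N_{[4]})M_{1}^{\top}=2(j_{p}\otimes C)M_{1}^{\top}$, whose $(i,k)$-entry equals $2\sum_{l}\langle c_{i},c_{b_{l}(k)}\rangle$ and genuinely depends on which rows of $C$ occur in the $k$-th row of $M_{1}$ (it is $+2$ for generic $k$, not $-2$). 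So for this one block your ``granting it'' would grant something false, and your outlined attack via $CC^{\top}$ and the $F_{i}$ cannot rescue it; the discrepancy between the two displayed matrices is not cosmetic.
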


\begin{proof}
The displayed rows $2$ to $4$ of $W$ consist of the matrix $N$ of Proposition \ref{NR2} with an initial column of ones added.
The final row of $W$ contains an initial column of ones followed by a submatrix $[-M, -M]$ where $M$ is as in Proposition \ref{affine}.
It follows from these results that all entries of $W$ come from $\{ \pm 1\}$. We must show that the inner product of any two distinct rows is $1$.
Since orthogonality of the rows of $M$ and $N$ has already been established, two tasks remain: to compute the inner product of the intial row with any other row, and to show that the inner product of a row of $[-M, -M]$ with a row of $N$ is equal to $2$.

The inner product of the first row with any other can be computed from the row sums of the component blocks of $W$. The row sums of $C$ are $-1$. Hence each row of $M_{0}$ has sum $-1$ and each row of $M_{1}$ has sum~$-q$. Recall also that the row sum of $u\otimes v$ is the
product of the row sums, and that row sums are linear. For example, the inner product of the first row of $W$ with any row from the third block evaluates as
\[ 1 + (1\cdot (-1)^{2}) + (-1)(0 \cdot (-1) + p) + (1\cdot1(-1)) + (1)(0\cdot(-1) + p) = 1.\]
The remaining verifications are similar, and left to the reader.

In light of the first column, we need to show that the inner product of a row of $[-M,-M]$ with a row of $N$ is~$+2$.
Take for example a row from the first block of $N$. Since the rows of $M_{0}$ all come from $C$, the contributions in the second and fourth displayed columns are $-1$ and $1$ respectively. Since $C$ contains $\frac{p-1}{2}$ entries $+1$ and $\frac{p+1}{2}$ entries $-1$, and the rows of $M_{1}$ are concatenations of rows of $C$, the contribution from the third block is $\frac{p+1}{2} -\frac{p-1}{2}$. The contribution from the final block is also $+1$, and hence the inner product evaluates as
\[
-1 -1 + \Bigl(\frac{p+1}{2} -\frac{p-1}{2}\Bigr) + 1 + \Bigl(\frac{p+1}{2} -\frac{p-1}{2}\Bigr) = 1.
\]
Here, too, we leave the remaining verifications to the reader.
\end{proof}

We note again that this result extends readily to odd prime powers; such a matrix has order $(q+1)^2 + q^2$. There are nine orders $n = 4t+1$ with $n \leq 200$ for which $2n-1$ is a perfect square. Of these, $n = 5, 13, 41$ are sufficiently small that maximal determinant matrices may be found by ad hoc means. Orders $n = 25, 61, 113, 181$ are of the form $q^{2} + (q+1)^{2}$, and so Theorem \ref{NRthm} applies. The remaining two cases are open. For $n = 85$, the Barba bound is $13 \cdot 84^{42}$, while Proposition \ref{FK} produces a matrix with determinant $10 \cdot 84^{42}$. A matrix with a larger determinant, $\frac{501}{49}\cdot 84^{42}$, was constructed by Orrick and Solomon \cite{OrrickWeb}. For $n = 145$, the Barba bound is $17 \cdot 144^{72}$ while Proposition \ref{FK} gives a matrix with determinant $13 \cdot 144^{72}$.

At orders $n \equiv 1 \mod 4$ where the Barba bound cannot be attained, rather less is known. Chadjipantelis, Kounias and Moyssiadis \cite{CKM} gave an analysis of the Gram matrices of maximal determinant matrices at orders~$17$ and $21$, and found explicit matrices of maximal determinant. Their method was extended by Brent, Orrick, Osborn and Zimmerman \cite{BOOZ} to find the Gram matrices of maximal determinant at order $37$. To our knowledge, these are the only cases not covered by Theorem \ref{1mod4} for which the maximal determinant is known. To be entirely explicit: we are not aware of work establishing the maximal determinants at orders $29, 33, 45$ or $49$, and these are the only open cases with $n \equiv 1 \mod 4$ and $n \leq 50$. Computational work by Orrick and Solomon shows that for all orders $n \leq 100$, matrices attaining at least $0.7$ of the Barba bound exist, and can be obtained from Hadamard matrices of large excess using Proposition \ref{FK}.

\section{A refined bound and the case \texorpdfstring{$n \equiv 2 \mod 4$}{n = 2 mod 4}}
\label{Sec2mod4}
The analysis of the case $n \equiv 2 \mod 4$ is a continuation of the techniques developed thus far. The results in this section were obtained by Cohn \cite{Cohn64-1}, Ehlich \cite{Ehlich}, Whiteman \cite{Whiteman} and Wojtas \cite{Wojtas}.

\begin{theorem} \label{2mod4}
Let $M$ be an $n \times n$ matrix with entries $\{ \pm 1\}$ where $n \equiv 2 \mod 4$. Then
\[ \det(M) \leq \left(2n-2\right)\left(n-2\right)^{\frac{n-2}{2}}.\]
If $M$ attains the bound then
\[ MM^{\top} = \begin{pmatrix} (n-2)I + 2J & 0 \\ 0 & (n-2)I + 2J \end{pmatrix}\!,\]
up to permutation and negation of rows of $M$, where all blocks of the Gram matrix are ${n}/{2} \times {n}/{2}$.
\end{theorem}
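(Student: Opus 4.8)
The plan is to mimic the structure of the proof of Theorem \ref{Wojtas}, but with a sharper version of the key lemma that exploits the congruence information from Proposition \ref{mod4}. First I would establish the congruence constraints: since $\det(M)$ is an integer and $n \equiv 2 \mod 4$, write $n = 2m$ with $m$ odd; after normalising rows via Proposition \ref{mod4} (applied to a $\{\pm1\}$ matrix of odd order — here one instead normalises so that every row has an even number of $-1$'s, noting that for $n$ even the inner products become congruent to $n \equiv 2 \mod 4$), the off-diagonal entries of $G = MM^{\top}$ all lie in $\{\ldots,-6,-2,2,6,\ldots\}$. Crucially, $0$ is \emph{not} among the possible inner products, so every off-diagonal entry has magnitude at least $2$; thus Theorem \ref{Wojtas} applies with $m = n$ (diagonal entries) and $b = 2$, yielding immediately
\[ \det(G) \leq (n + 2n - 2)(n-2)^{n-1} = (3n-2)(n-2)^{n-1}, \]
which is \emph{not} quite the claimed bound --- the claimed bound is $\det(M)^2 \leq (2n-2)^2 (n-2)^{n-2}$. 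So the naive application of Theorem \ref{Wojtas} is too weak, and the real work is to improve it.

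The improvement comes from the block structure. The idea (following Ehlich--Wojtas) is that a $\{\pm 1\}$ Gram matrix with all off-diagonal entries $\equiv 2 \mod 4$ cannot have \emph{all} off-diagonal entries equal to $\pm 2$ with a single sign pattern; instead the extremal configuration splits the index set into two halves, on each of which $G$ restricts to $(n-2)I + 2J$, with the cross-block entries forced to be $0$ --- but $0$ is forbidden, so one must argue more carefully. The correct approach is: rather than bounding $\det(G)$ by a single application of Lemma \ref{lem1}, partition the rows into two classes according to sign patterns and use a two-variable optimisation. Concretely, I would prove a refined lemma: if $G$ is positive definite symmetric of order $n$ with diagonal $n$ and every off-diagonal entry in $\{2, -2\}$ (after reduction we may assume magnitude exactly $2$ at the extremum, by the strict-inequality argument of Corollary \ref{daddydidit}), then $\det(G)$ is maximised when the $\pm$ pattern is that of $\begin{pmatrix} A & -B \\ -B^\top & A\end{pmatrix}$ with $A = (n-2)I + 2J_{n/2}$ and $B$ chosen to make the off-diagonal blocks contribute nothing --- and then one computes this determinant directly. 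Equivalently, writing $G = (n-2)I + 2S$ where $S$ is a symmetric $\{\pm1\}$-ish matrix with $\pm1$ off-diagonal and $0$... no: $G = (n-2)I + 2S$ where $S$ has diagonal $1$ and off-diagonal $\pm 1$, i.e.\ $S$ itself is a $\{\pm 1\}$ symmetric matrix. Then $\det(G) = 2^n \det(\frac{n-2}{2}I + S)$, and one needs to bound $\det(\frac{n-2}{2}I + S)$ over symmetric $\{\pm1\}$ matrices $S$. The eigenvalue $\lambda$ of $S$ of largest magnitude governs things: $\det(\frac{n-2}{2}I + S) = \prod (\frac{n-2}{2} + \lambda_i)$, and since $\operatorname{tr}(S) = n$, $\operatorname{tr}(S^2) = n^2$ (as $S$ is $\{\pm1\}$ symmetric with $1$'s on the diagonal... actually $\operatorname{tr}(S^2) = \sum s_{ij}^2 = n + n(n-1) = n^2$), the eigenvalues are constrained, and a Lagrange-multiplier / convexity argument over $\{\lambda_i : \sum \lambda_i = n, \sum \lambda_i^2 = n^2\}$ with the positive-definiteness constraint $\lambda_i > -(n-2)/2$ pins down the maximum.

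Carrying out that optimisation is the main obstacle. The function $\prod_i (c + \lambda_i)$ subject to fixed $\sum \lambda_i$ and $\sum \lambda_i^2$ is, by Lagrange multipliers, extremised when the $\lambda_i$ take at most two distinct values; so the extremal $S$ has at most two distinct eigenvalues, forcing (since $S - (\text{something})I$ then has rank-deficient structure) that $S$ is --- up to sign changes of rows/columns --- a block matrix $\begin{pmatrix} J & -J \\ -J & J\end{pmatrix}$-type object, i.e.\ $2S$ has the stated form $\begin{pmatrix}(n-2)I+2J & 0 \\ 0 & (n-2)I+2J\end{pmatrix}$ for $G$. Here one uses that $n \equiv 2 \bmod 4$ forces $n/2$ to be an integer and the parity to work out so that such an $S$ is realisable with the right eigenvalues $\{(n/2)^{\pm}, \ldots\}$; for $n \equiv 0 \bmod 4$ the analogous configuration would be a Hadamard-type split and is excluded. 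Then the determinant computation is routine: $(n-2)I + 2J_{n/2}$ has eigenvalues $n-2$ (multiplicity $n/2 - 1$) and $n-2 + n = 2n-2$ (multiplicity $1$), so each block has determinant $(2n-2)(n-2)^{n/2 - 1}$, the full Gram determinant is $(2n-2)^2 (n-2)^{n-2}$, and $|\det(M)| \leq (2n-2)(n-2)^{(n-2)/2}$ as claimed. The equality characterisation then falls out of the chain of equality conditions: Proposition \ref{mod4} normalisation, strict inequality whenever any $|g_{ij}| > 2$ (as in Corollary \ref{daddydidit}), and uniqueness of the two-eigenvalue extremiser up to row/column permutation and negation --- modulo checking that the off-diagonal blocks must genuinely vanish, which is where the $\{\pm 1\}$ integrality and the $\equiv 2 \bmod 4$ congruence are used one final time to rule out cross-block entries of $\pm 2$.
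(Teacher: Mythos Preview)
Your proposal contains a genuine gap at the very first step. You claim that after normalising each row to have an even number of $-1$'s, all off-diagonal entries of $G = MM^{\top}$ become congruent to $2 \bmod 4$, hence nonzero. But for even $n$, negating a row \emph{preserves} the parity of its number of $-1$'s (since $k$ and $n-k$ have the same parity), so this normalisation is impossible in general. Indeed, the very Gram matrix you are trying to characterise as extremal has \emph{zero} entries in its off-diagonal blocks, so any argument that begins by excluding zeros is doomed: the target matrix violates your ansatz $G = (n-2)I + 2S$ with $S$ a $\{\pm 1\}$ matrix. Your subsequent Lagrange-multiplier optimisation, even if it could be made rigorous, is optimising over the wrong class.

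What actually happens is this: the inner product of two $\{\pm 1\}$ rows with $a$ and $b$ negative entries respectively is $n - 2a - 2b + 4c \equiv 2(1-a-b) \bmod 4$, so $g_{i,j} \equiv 2 \bmod 4$ precisely when rows $i$ and $j$ have negative counts of the same parity. This partitions the row indices into two classes; entries of $G$ \emph{within} a class are $\equiv 2 \bmod 4$ (hence have magnitude $\geq 2$), while entries \emph{between} classes are $\equiv 0 \bmod 4$. The paper establishes essentially this partition via the identity $g_{r,r} + g_{r,s} + g_{s,t} + g_{t,r} \equiv 0 \bmod 4$, then applies Fischer's inequality (Theorem~\ref{Fischer}) to separate the two diagonal blocks, applies Theorem~\ref{Wojtas} with $b=2$ to each block separately, and finally optimises over the block sizes $k$ and $n-k$ to find the maximum at $k = n/2$. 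Equality in Fischer forces the cross-block to vanish, and equality in Theorem~\ref{Wojtas} (via Corollary~\ref{daddydidit}) forces each diagonal block to be $(n-2)I + 2J$. The Fischer step is the key idea you are missing.
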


\begin{proof}
We start with the first statement.
Let $G:=MM^{\top},$ with entries $g_{i,j},$ then $G$ is positive definite and symmetric.
Since $n \equiv 2 \mod 4$ and $M$ has entries in $\{ \pm 1\}$, it follows that
$g_{i,i}=n$ and $g_{i,j}$ is even, for all $1 \leq i,j \leq n$.

If no pair of rows of $M$ are orthogonal then every entry of $G$ has magnitude at least $2$, and Theorem \ref{Wojtas} applies with $b=2$ yielding the required bound
\[ \det(G) \leq \left(3n^{2} - 8n +4\right) (n-2)^{n-2} \leq (2n-2)^{2} (n-2)^{n-2}.\]
Otherwise, $g_{i,j} \equiv 0 \mod 4$ for some $i \neq j$. Up to simultaneous permutation of rows and columns of $G$,
we may assume that $g_{1,j} \equiv 2 \mod 4$ for $1 \leq j \leq k$ and $g_{1,j} \equiv 0 \mod 4$ for $k+1 \leq j \leq n$.
Set
\[ G = \begin{pmatrix} A^{\phantom{T}} & B \\ B^{\top} & D \end{pmatrix}\!, \]
where $A$ is $k \times k$ and $D$ is $(n-k) \times (n-k)$. We claim that all entries of $A$ and $D$ are $2 \mod 4$ and
that all entries of $B$ are $0 \mod 4$. For any $r,s,t$ in the range $1$ to $n$, we have
  \begin{align*}
   g_{r,r}+g_{r,s}+g_{s,t}+g_{t,r}&=\sum_{i}{m_{r,i}^2+m_{r,i}m_{s,i}+m_{s,i}m_{t,i}+m_{t,i}m_{r,i}}\\
  &=\sum_{i}{(m_{r,i}+m_{s,i})(m_{r,i}+m_{t,i})}.
    \end{align*}
Since $m_{i,j} \in \{ \pm 1\}$, each of
the terms $(m_{r,i}+m_{s,i})$ and $(m_{r,i}+m_{t,i})$ is even,
so their product is divisible by~$4$.
Since $g_{r,r} \equiv 2 \mod 4,$ it follows that $g_{r,s}+g_{s,t}+g_{t,r} \equiv 2 \mod 4$.
Setting $t = 1$ and $r, s \leq k$ we see that $g_{s,1} \equiv g_{r,1} \equiv 2 \mod 4$ and hence $g_{r,s} \equiv 2 \mod 4$.
Hence, every entry of $A$ is $2 \mod 4$. Similarly, it can be shown that the entries of $D$ are $2 \mod 4$ and,
exploiting that $G$ is symmetric, that the entries of $B$ are $0 \mod 4$.

Next, we apply Theorem \ref{Fischer} to see that
\[ \det(G) \leq \det(A) \det(D).\]
Since the elements of $A$ and $D$ are all $2 \mod 4$, we can apply the bound of Theorem \ref{Wojtas}
with $m = n$ and $b= 2$:
\begin{align*}
\det(G) & \leq (n+2(n-k)-2)(n-2)^{n-k-1}(n+2k-2)(n-2)^{k-1}\\
 & = ((2n-2)^2-(n-2k)^2)(n-2)^{n-2}.
\end{align*}
This bound is maximised when $n- 2k = 0$, or, equivalently, when $k = n/2$.

The bound is attained when equality holds in both Fischer's inequality, which requires that $B = \textbf{0}$, and in the Ehlich--Wojtas
bound with $b = 2$, characterised by Corollary \ref{daddydidit}.
\end{proof}

A little further work gives a necessary Diophantine condition for the existence of a matrix meeting the bound of Theorem \ref{2mod4}.

\begin{theorem} \label{optimal2mod4}
If $M$ is an $n \times n$ matrix meeting the bound of Theorem \ref{2mod4} with equality, then $2n-2$ is the sum of two squares.
\end{theorem}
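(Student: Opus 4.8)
The plan is to convert the structural constraint supplied by Theorem~\ref{2mod4} into an equivalence of rational quadratic forms and then read off a local condition at each prime. If $M$ attains the bound, Theorem~\ref{2mod4} lets us (after permuting and negating rows of $M$, operations that fix $|\det M|$ and preserve the $\{\pm1\}$ entries) assume $MM^\top = G := G_0 \oplus G_0$, where $G_0 = (n-2)I_m + 2J_m$ and $m = n/2$. Since $(\det M)^2 = \det G = (\det G_0)^2 = (n-2)^{n-2}(2n-2)^2 \neq 0$, the matrix $M$ is invertible over $\mathbb{Q}$, so the identity $G = M I_n M^\top$ exhibits $G$ and $I_n$ as equivalent quadratic forms over $\mathbb{Q}$, hence as equivalent forms over every $p$-adic field $\mathbb{Q}_p$.

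Next I would compare Hasse invariants of the two forms. That of $I_n$ is trivial at every place. For $G = G_0 \oplus G_0$, additivity of the Hasse invariant under orthogonal sums (the self-product $c_p(G_0)^2$ contributing $1$) together with the Hilbert-symbol identity $(a,a)_p = (a,-1)_p$ gives $c_p(G) = (\det G_0,\,\det G_0)_p = (\det G_0,\,-1)_p$. Here the hypothesis $n \equiv 2 \mod 4$ enters decisively: it forces $4 \mid n-2$, so the exponent $m-1 = (n-2)/2$ is even, whence $(n-2)^{m-1}$ is a perfect square and $\det G_0 = (n-2)^{m-1}(2n-2)$ is congruent to $2n-2$ modulo squares in $\mathbb{Q}_p^\ast$. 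Therefore $c_p(G) = (2n-2,\,-1)_p$, and equality of invariants with $I_n$ forces $(2n-2,\,-1)_p = 1$ for every prime $p$.

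Finally I would unwind this into the two-squares criterion. For an odd prime $p$, writing $2n-2 = p^t u$ with $p \nmid u$ and recalling that the Hilbert symbol of two $p$-adic units is trivial, one gets $(2n-2,\,-1)_p = \left(\tfrac{-1}{p}\right)^{t}$; so the conditions at the odd primes say precisely that $v_p(2n-2)$ is even whenever $p \equiv 3 \mod 4$. By the classical two-squares theorem of Fermat, this is exactly the statement that the positive integer $2n-2$ is a sum of two integer squares, which completes the proof. (Equivalently, one may feed the relations $(2n-2,-1)_p = 1$ into the Hasse--Minkowski theorem applied to the ternary form $x^2 + y^2 - (2n-2)z^2$, visibly isotropic over $\mathbb{R}$, to conclude that $2n-2$ is a sum of two rational, hence of two integer, squares.)

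The only genuine obstacle is the quadratic-forms bookkeeping: one must invoke that the Hasse invariant, taken together with the discriminant, is an invariant of the $\mathbb{Q}_p$-equivalence class, and the computation $c_p(G) = (2n-2,-1)_p$ really does use $4 \mid n-2$. Without that divisibility the argument would only constrain $(n-2)^{m-1}(2n-2)$ modulo squares rather than $2n-2$ itself, which is exactly why the clean Diophantine restriction does not persist for general even $n$.
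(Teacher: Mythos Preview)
Your proof is correct and takes a genuinely different route from the paper. The paper argues constructively: after arranging that both $MM^{\top}$ and $M^{\top}M$ equal $G$ (by normalising rows \emph{and} columns), the resulting matrix $N$ commutes with $G$, which forces each $\tfrac{n}{2}\times\tfrac{n}{2}$ block of $N$ to have constant row and column sums $a,b,c,d$; evaluating $(J\oplus J)\,NN^{\top}\,(J\oplus J)$ in two ways then yields $a^{2}+b^{2}=2n-2$ directly. By contrast, you pass through the theory of quadratic forms over $\mathbb{Q}_{p}$: the identity $G = M I_{n} M^{\top}$ makes $G$ rationally equivalent to $I_{n}$, and matching Hasse invariants gives $(2n-2,-1)_{p}=1$ at every prime, which is Fermat's two-squares criterion.

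The paper's argument is elementary and produces explicit witnesses $a,b$ together with further structural information (the subsequent relations $a=\pm d$, $b=\mp c$). Your argument is non-constructive but conceptually cleaner, and it is the same machinery Tamura applies to rule out attainment of the Ehlich bound at small orders $n\equiv 3\bmod 4$; the place where $n\equiv 2\bmod 4$ enters your proof---making $m-1=(n-2)/2$ even so that $\det G_{0}\equiv 2n-2$ modulo squares---is exactly the point at which the analogous computation for general even $n$ would break down.
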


\begin{proof}
Suppose that $M$ meets the bound of Theorem \ref{2mod4}. Then there exists a signed permutation matrix 
$P_{1}$ such that $P_{1}MM^{\top}P_{1}^{\top} = G$, where $G$ is the Gram matrix given in the theorem statement. 
By the argument of Theorem \ref{2mod4}, any Gram matrix with determinant equal to $\det(G)$ is similar to $G$ by permutation 
and negation of rows and columns. Because $\det(MM^{\top}) = \det(M^{\top}M)$, there exists another signed permutation matrix $P_{2}$ such that $P_{2}M^{\top}MP_{2}^{\top} = G$. Let $N = P_{1}MP_{2}^{\top}$. Then
\[
NN^{\top} = P_{1}MM^{\top}P_{1}^{\top} = G, \quad N^{\top}N = P_{2}M^{\top}MP_{2}^{\top} = G.
\]
Thus $N$ commutes with $N^{\top}$, and it follows that $N$ commutes with $G$. It will be convenient to write
\[ N = \begin{pmatrix} A & B \\ C & D \end{pmatrix}\!,\]
where all blocks are $n/2 \times n/2$, as established in the proof of Theorem \ref{2mod4}.
We then see that $XJ = JX$ for all $X \in \{ A, B, C, D \}$.
But $XJ$ is constant on rows, while $JX$ is constant on columns. We conclude that $XJ = JX = xJ$,
where all row and column sums of $X$ are equal to $x$. To conclude the proof, consider the matrix product
\[ \begin{pmatrix}
 J & 0 \\ 0 & J
\end{pmatrix}\begin{pmatrix}
 A & B \\ C & D
\end{pmatrix}
\begin{pmatrix}
 A & B \\ C & D
\end{pmatrix}^{\top}\! \begin{pmatrix}
 J & 0 \\ 0 & J
\end{pmatrix}\!.\]
Evaluating the product of the first two and the last two matrices, we obtain
\[ \begin{pmatrix}
 aJ & bJ \\ cJ & dJ
\end{pmatrix}\begin{pmatrix}
 aJ & bJ \\ cJ & dJ
\end{pmatrix}^{\top} =
 \begin{pmatrix}
 (a^{2} + b^{2})J^{2} & (ac +bd)J^{2} \\ (ac+bd)J^{2} & (c^{2}+d^{2})J^{2}
\end{pmatrix}\!.\]
On the other hand, evaluating $NN^{\top}$ first, we obtain
\[  \begin{pmatrix}
 J & 0 \\ 0 & J
\end{pmatrix} \left( \begin{array}{cc} (n-2)I + 2J & 0 \\ 0 & (n-2)I + 2J \end{array}\right) \begin{pmatrix}
 J & 0 \\ 0 & J
\end{pmatrix} = \begin{pmatrix}
 (2n-2)J^{2} & 0 \\ 0 & (2n-2)J^{2}
\end{pmatrix}\!.\]
Equating these expressions, we conclude that $a^{2} + b^{2} = 2n-2$, as required.
\end{proof}

It is possible to continue the argument of Theorem \ref{optimal2mod4} a little further: from $ac = -bd$ and $a^{2} + b^{2} = c^{2} + d^{2}$, it follows that $a = \pm d$ and $b = \mp c$. So matrices attaining the bound of Theorem \ref{2mod4} are intimately related to sums of two squares. The well-known characterisation of Fermat shows that an integer fails to be a sum of two squares if and only if its square-free part is divisible by a prime $p \equiv 3 \mod 4$; see, for example, \cite{IrelandRosen}. From Theorem \ref{NRthm} we obtain matrices meeting the bound of Theorem \ref{2mod4}.

\begin{corollary} \label{cor2mod4}
Let $W$ be a matrix of order $n \equiv 1 \mod 4$ meeting the bound of Theorem \ref{1mod4}. Then
\[ \begin{pmatrix}
 W & \phantom{-}W \\ W & -W
\end{pmatrix} \]
is a matrix of order $2n \equiv 2 \mod 4$ which meets the bound of Theorem \ref{2mod4}.
\end{corollary}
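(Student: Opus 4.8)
The plan is to write the doubled matrix out explicitly, compute its Gram matrix in a single block multiplication, and then check that the resulting determinant equals the Ehlich--Wojtas bound at order $2n$ by matching exponents.

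Write $V = \begin{pmatrix} W & \phantom{-}W \\ W & -W \end{pmatrix}$. Its entries lie in $\{\pm 1\}$ and its order is $2n \equiv 2 \mod 4$. First I would compute
\[ VV^{\top} = \begin{pmatrix} W & \phantom{-}W \\ W & -W \end{pmatrix}\begin{pmatrix} W^{\top} & \phantom{-}W^{\top} \\ W^{\top} & -W^{\top} \end{pmatrix} = \begin{pmatrix} 2WW^{\top} & 0 \\ 0 & 2WW^{\top} \end{pmatrix}\!, \]
the off-diagonal blocks vanishing because $WW^{\top} - WW^{\top} = 0$. By Theorem \ref{1mod4}, up to permutation and negation of rows and columns $WW^{\top} = (n-1)I_{n} + J_{n}$, so $2WW^{\top} = (2n-2)I_{n} + 2J_{n}$. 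Hence $VV^{\top}$ has precisely the block-diagonal shape asserted in Theorem \ref{2mod4} for a matrix of order $2n$: two diagonal blocks of size $n \times n$, each equal to $\bigl((2n)-2\bigr)I + 2J$.

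It remains to verify that $|\det(V)|$ is maximal. Since $W$ attains the bound of Corollary \ref{Barba}, $\det(W)^{2} = \det(WW^{\top}) = (2n-1)(n-1)^{n-1}$, so, as $WW^{\top}$ is $n \times n$,
\[ \det(V)^{2} = \det(VV^{\top}) = \bigl(\det(2WW^{\top})\bigr)^{2} = \bigl(2^{n}(2n-1)(n-1)^{n-1}\bigr)^{2}, \]
whence $|\det(V)| = 2^{n}(2n-1)(n-1)^{n-1} = (4n-2)(2n-2)^{n-1}$. On the other hand, the bound of Theorem \ref{2mod4} evaluated at order $2n$ is $\bigl(2(2n)-2\bigr)\bigl((2n)-2\bigr)^{((2n)-2)/2} = (4n-2)(2n-2)^{n-1}$, which agrees. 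Thus $V$ meets the bound of Theorem \ref{2mod4}.

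There is no substantial obstacle here: the computation of $VV^{\top}$ is immediate, and the only point requiring care is the arithmetic of the exponents when translating the Barba bound at order $n$ into the Ehlich--Wojtas bound at order $2n$. One could alternatively avoid citing $\det(W)$ directly and instead read $\det(VV^{\top})$ off the eigenvalues: the block $(2n-2)I_{n} + 2J_{n}$ has eigenvalue $4n-2$ once and $2n-2$ with multiplicity $n-1$, giving $\det(VV^{\top}) = \bigl((4n-2)(2n-2)^{n-1}\bigr)^{2}$, with the same conclusion.
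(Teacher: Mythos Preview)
Your proof is correct and follows essentially the same approach as the paper: compute $VV^{\top}$ block-wise and observe that the diagonal blocks are $2WW^{\top} = (2n-2)I_{n} + 2J_{n}$ while the off-diagonal blocks vanish. The paper stops there, relying on Theorem~\ref{2mod4} to conclude that this Gram matrix shape implies the bound is met; your additional explicit verification that $|\det V| = 2^{n}(2n-1)(n-1)^{n-1} = (4n-2)(2n-2)^{n-1}$ is redundant but perfectly sound, and arguably tidier since it sidesteps the ``up to permutation and negation'' qualifier in Theorem~\ref{1mod4}.
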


\begin{proof}
Compute the Gram matrix: the diagonal blocks are of the form $2WW^{\top} = (2n-2)I_{n} +2J_{n}$, while the off-diagonal blocks are
$\textbf{0}$.
\end{proof}

Of course, not every maximal determinant matrix arises from Corollary \ref{cor2mod4}.
As observed by Koukouvinos, Kounias and Seberry, a construction of Spence using difference
sets and projective planes yields a second infinite family. Note that $(2q+1)^{2} +1 = 2(2q^{2} + 2q + 1)$.

\begin{theorem}[Theorem 1, \cite{Spence}, Theorem 2, \cite{KKS}]\label{Spence}
For any odd prime power $q$ there
exists a pair of circulant matrices $R$ and $S$ of order $v = q^{2} + q + 1$ with entries $\{ \pm 1\}$ such that
\[ RR^{\top} + SS^{\top} = (2v-2)I_{v} + 2J_{v} .\]
The matrix
\[ \begin{pmatrix}
 R^{\phantom{\top}} & S \\ S^{\top} & - R^{\top}
\end{pmatrix}\]
has maximal determinant. The row-sums of $R$ are all equal to $2q+1$ and the row sums of $S$ are $-1$.
\end{theorem}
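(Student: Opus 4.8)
The plan is to construct the pair $(R,S)$ from the projective plane $PG(2,q)$ and then to check that the displayed block matrix has precisely the Gram matrix prescribed by Theorem~\ref{2mod4}. First I would invoke Singer's theorem: for every prime power $q$ the plane $PG(2,q)$ admits a cyclic collineation group acting regularly on points (and on lines), so that the $v\times v$ point--line incidence matrix may be taken circulant; equivalently there is a planar difference set $D\subseteq\mathbb{Z}_v$ with $|D|=q+1$ whose circulant matrix $C_D$ satisfies $C_DC_D^{\top}=qI+J$ and $C_DJ=JC_D=(q+1)J$. Every matrix appearing in the construction is a polynomial in the basic cyclic shift and hence circulant.

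Next, following Spence, I would realise $R$ and $S$ as $\{\pm1\}$ circulants whose sets of $-1$ positions, say $D_1,D_2\subseteq\mathbb{Z}_v$, form a pair of supplementary difference sets with the correct parameters. Writing $R=J-2C_{D_1}$ and $S=J-2C_{D_2}$, expanding, and using $C_{D_i}J=|D_i|J$ gives
\[
RR^{\top}+SS^{\top}=4\bigl(|D_1|+|D_2|-\lambda\bigr)I+\bigl(2v-4(|D_1|+|D_2|)+4\lambda\bigr)J ,
\]
where $\lambda$ is the number of representations of each nonzero element of $\mathbb{Z}_v$ as a difference drawn from within $D_1$ or from within $D_2$. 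Hence the target identity $RR^{\top}+SS^{\top}=(2v-2)I_v+2J_v$ is equivalent to $(D_1,D_2)$ being a supplementary pair with $|D_1|+|D_2|-\lambda=\tfrac{v-1}{2}=\tfrac{q(q+1)}{2}$; imposing the prescribed row sums pins down $|D_1|=\tfrac{q(q-1)}{2}$ and $|D_2|=\tfrac{q^2+q+2}{2}$, consistent with the all-ones eigenvalue $(2q+1)^2+(-1)^2=4v-2$ of $(2v-2)I_v+2J_v$. The substance of Spence's construction is to produce such a pair $D_1,D_2$ explicitly from $PG(2,q)$ and to verify the supplementary-difference-set identity; equivalently, writing $\rho(\psi)$ and $\sigma(\psi)$ for the eigenvalues of $R$ and $S$ at a character $\psi$ of $\mathbb{Z}_v$, one checks $|\rho(\psi)|^{2}+|\sigma(\psi)|^{2}=2v-2$ for every nontrivial $\psi$, a Gauss-sum-style computation fed by the incidence relation $C_DC_D^{\top}=qI+J$.

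Granting the identity, the remainder is formal. The $v\times v$ circulant matrices form a commutative ring closed under transposition, so $R,S,R^{\top},S^{\top}$ commute pairwise and each is normal; thus $RS-SR=0$, $R^{\top}R=RR^{\top}$ and $S^{\top}S=SS^{\top}$, and
\[
\begin{pmatrix} R & S \\ S^{\top} & -R^{\top}\end{pmatrix}\!\begin{pmatrix} R & S \\ S^{\top} & -R^{\top}\end{pmatrix}^{\!\top}
=\begin{pmatrix} RR^{\top}+SS^{\top} & RS-SR \\ S^{\top}R^{\top}-R^{\top}S^{\top} & SS^{\top}+RR^{\top}\end{pmatrix}
=\begin{pmatrix} (2v-2)I+2J & 0 \\ 0 & (2v-2)I+2J\end{pmatrix}\!.
\]
All entries of the left-hand matrix lie in $\{\pm1\}$, and with $n=2v$ we have $n-2=2v-2$, so this is exactly the Gram matrix of Theorem~\ref{2mod4}. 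Since $\det\bigl((2v-2)I_v+2J_v\bigr)=(4v-2)(2v-2)^{v-1}$, the block matrix has $|\det|=(4v-2)(2v-2)^{v-1}=(2n-2)(n-2)^{(n-2)/2}$ and so meets the bound of Theorem~\ref{2mod4}; the stated row sums of $R$ and $S$ are read off from $|D_1|$ and $|D_2|$.

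The main obstacle is the middle step: exhibiting the supplementary pair $D_1,D_2$ inside $\mathbb{Z}_v$ arising from $PG(2,q)$ and proving the associated difference (equivalently character-sum) identity. The two surrounding steps---extracting circulant structure from Singer's theorem, and deducing maximality of the determinant from the block computation---are routine.
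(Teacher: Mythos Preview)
The paper does not give a proof of this theorem; it is stated with attribution to Spence and to Koukouvinos--Kounias--Seberry, and the surrounding text remarks only that the construction uses difference sets and projective planes. So there is no in-paper argument to compare against beyond that one-line description.

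Your outline matches the standard route and is correct in its formal parts. The invocation of Singer's theorem to obtain circulant structure on $\mathbb{Z}_v$, the translation of $RR^{\top}+SS^{\top}=(2v-2)I+2J$ into a supplementary-difference-set condition on $(D_1,D_2)$, and the block Gram computation (using that circulants commute and are normal, so $RS-SR=0$ and $R^{\top}R=RR^{\top}$) are all sound. Your parameter arithmetic checks: $|D_1|=q(q-1)/2$, $|D_2|=(q^{2}+q+2)/2$, and the required common value $\lambda=(q^{2}-q+2)/2$ are consistent with $|D_1|+|D_2|-\lambda=(v-1)/2$ and with the all-ones eigenvalue $(2q+1)^{2}+1=4v-2$.

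That said, you explicitly flag and do not carry out the decisive step: exhibiting the pair $(D_1,D_2)$ inside $\mathbb{Z}_v$ and verifying the difference (equivalently character-sum) identity. So what you have is a correct reduction of the theorem to Spence's construction, not a proof of it. To complete the argument you would need to build $D_1,D_2$ from the Singer difference set of $PG(2,q)$ and check the supplementary condition; as written, the proposal is a faithful blueprint whose hard core is still outsourced to the cited references, which is exactly how the paper itself treats the result.
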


For an odd prime power $q$, Corollary \ref{cor2mod4} gives matrices of order $4q^{2} + 4q + 2$
while Theorem \ref{Spence} gives matrices of order $2q^{2} + 2q + 2$. To our knowledge, these are
the only known constructions for infinite families of maximal determinant matrices in dimensions $n \equiv 2 \mod 4$.
The following result, seemingly due to Cohn, provides a denser family of matrices which come within a factor of $2$ of optimality.

\begin{proposition}[Theorem 3, \cite{Cohn64-1}]\label{CohnProp}
Let $q \equiv 1 \mod 4$ be a prime power, and let $Q$ be the matrix obtained from the quadratic residue symbol by $Q_{i,j} =
\left(i-j\right)^{q-1/2}$.
Then the matrix
\[ M = \begin{pmatrix}
 Q+ I & -j_{q} \\ j_{q}^{\top} & 1
\end{pmatrix} \]
has order $n = q+1$ and determinant $n(n-2)^{\frac{n-2}{2}}$.
\end{proposition}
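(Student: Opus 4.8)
The plan is to compute the Gram matrix $MM^\top$ explicitly and then read off $\det(M)$ from the spectrum of a small block. Since $q \equiv 1 \bmod 4$ we have $\chi(-1) = 1$, so the Paley core $Q$ of Proposition~\ref{paleycore} (which, as remarked, generalises verbatim from primes to prime powers) is \emph{symmetric}; moreover $QQ^\top = Q^2 = qI - J$, the diagonal of $Q$ is zero, and $Qj_q = j_q^\top Q = \mathbf{0}$ because every row (and column) of $Q$ sums to $\sum_{x}\chi(x) = 0$.

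First I would expand $MM^\top$ in $2\times 2$ blocks. Using $Q^\top = Q$ and $(Q+I)^2 = Q^2 + 2Q + I = (q+1)I - J + 2Q$, the $(1,1)$ block is $(Q+I)^2 + j_q j_q^\top = (q+1)I + 2Q$; the off-diagonal blocks are $(Q+I)j_q - j_q = Qj_q = \mathbf{0}$ and its transpose; and the $(2,2)$ entry is $j_q^\top j_q + 1 = q+1$. Hence
\[
MM^\top = \begin{pmatrix} (q+1)I + 2Q & \mathbf{0} \\ \mathbf{0}^\top & q+1 \end{pmatrix}, \qquad \det(M)^2 = (q+1)\det\!\big((q+1)I + 2Q\big).
\]

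Next I would diagonalise $Q$. On the line $\langle j_q\rangle$ it acts as $0$; on the complementary invariant subspace $j_q^\perp$ the relation $Q^2 = qI - J$ reduces to $Q^2 = qI$, so every eigenvalue of $Q$ there is $\pm\sqrt q$. Writing $a,b$ for the two multiplicities we have $a + b = q-1$, and since $\operatorname{tr}(Q) = 0$ (zero diagonal) we get $(a-b)\sqrt q = 0$, i.e.\ $a = b = (q-1)/2$. Therefore the eigenvalues of $(q+1)I + 2Q$ are $q+1$ (once) and $(q+1) \pm 2\sqrt q = (\sqrt q \pm 1)^2$ (each with multiplicity $(q-1)/2$), so
\[
\det\!\big((q+1)I + 2Q\big) = (q+1)\big((\sqrt q + 1)(\sqrt q - 1)\big)^{q-1} = (q+1)(q-1)^{q-1}.
\]
Combining with the previous display, $\det(M)^2 = (q+1)^2(q-1)^{q-1}$, and since $n = q+1$ and $n-2 = q-1$ we conclude $|\det(M)| = n(n-2)^{(n-2)/2}$ (the sign being immaterial for the maximal determinant problem).

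The only genuine obstacle is pinning down the eigenvalue multiplicities of $Q$, and the vanishing trace settles this in one line; everything else is routine block arithmetic. One can avoid eigenvalues altogether by extracting the same constant from the identity $\big((q+1)I + 2Q\big)\big((q+1)I - 2Q\big) = (q+1)^2 I - 4Q^2 = (q-1)^2 I + 4J$, whose determinant is $(q-1)^{2(q-1)}(q+1)^2$; since the spectrum of the symmetric matrix $Q$ is symmetric about $0$, the two factors on the left have equal determinant, and each is positive (as $(\sqrt q - 1)^2 > 0$), giving $\det\!\big((q+1)I + 2Q\big) = (q+1)(q-1)^{q-1}$ once more.
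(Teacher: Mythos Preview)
Your proof is correct and follows essentially the same approach as the paper: both compute $MM^{\top}$ block-wise to obtain $\begin{pmatrix}(q+1)I+2Q & 0\\ 0 & q+1\end{pmatrix}$, then determine the spectrum of $Q$ via $Q^{2}=qI-J$ together with $\operatorname{tr}(Q)=0$ to conclude $\det(MM^{\top})=(q+1)^{2}(q-1)^{q-1}$. Your presentation is slightly more detailed (explicitly restricting $Q^{2}=qI$ to $j_{q}^{\perp}$) and your closing remark offers a mild variant via the product $\big((q+1)I+2Q\big)\big((q+1)I-2Q\big)$, but the core argument is identical.
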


\begin{proof}
Since $q \equiv 1 \mod 4$, we have that $-1$ is a quadratic residue in $\mathbb{F}_{q}$. So $Q$ is symmetric and by Proposition~
\ref{paleycore}, $QQ^{\top} = qI -J$.
In particular, the eigenvalues of $QQ^{\top}$ are $0$ with multiplicity $1$ and $q$ with multiplicity $q-1$. Since $\textrm{Tr}(Q) = 0$,
the eigenvalues of $Q$ are $0$ with
multiplicity $1$, and $\pm \sqrt{q}$ each with multiplicity $\frac{q-1}{2}$.
We compute:
\[ MM^{\top} = \begin{pmatrix}
 (q+1)I_{q} + 2Q & 0 \\ 0 & q+1
\end{pmatrix}\!.\]
So the eigenvalues of $MM^{\top}$ are: $(q+1)$ with multiplicity $2$, and $q + 1 \pm 2\sqrt{q}$ each with multiplicity
$\frac{q-1}{2}$.
Hence
\begin{align*}
 \det(MM^{\top}) & =  (q+1)^{2} ( q +1 + 2\sqrt{q})^{\frac{q-1}{2}}  ( q +1 - 2\sqrt{q})^{\frac{q-1}{2}} \\
 & =  (q+1)^{2} ( 1 + \sqrt{q} )^{q-1} ( 1 - \sqrt{q})^{q-1} \\
 & =  (q+1)^{2} (1 - q )^{q-1} \\
 & =  (q+1)^{2} (q-1)^{q-1}.
\end{align*}
Hence $|\mkern-2mu\det(M) | = (q+1)(q-1)^{\frac{q-1}{2}}$, within a
multiplicative factor of $\frac{q+1}{2q-2} \sim \frac{1}{2}$ of the
bound of Theorem \ref{2mod4}.
\end{proof}

There are several other constructions in the literature for matrices of order $n \equiv 2 \mod 4$ with large determinant. Brent and Osborn \cite{BrentOsbornEJC} consider submatrices of order $n-2$ of a Hadamard matrix of order $n$. Brent, Osborne and Smith \cite{BrentOsbornProb} add two rows and columns to a Hadamard matrix. This work is discussed further in Section \ref{newbounds}.
We conclude this section with an overview of known results for small orders. Computational results by Djokovi\'c and Kotsireas \cite{DK1, DK2} show that a pair of circulant matrices $R, S$ satisfying the identity $RR^{\top} + SS^{\top} \!=\! (2n-2)I + 2J$ exists at all orders $n$ for which $2n-2$ is a sum of two squares up to $n = 198$. As in Proposition \ref{Spence}, such matrices easily yield maximal determinant matrices of order $n$. In contrast to the Diophantine condition for matrices meeting the Barba bound, the condition that $2n-2$ be a sum of two squares is relatively easy to satisfy\footnote{Recall that the only obstruction occurs when the square-free part of $2n-2$ has a prime divisor $p \equiv 3 \mod 4$. For example, for $n = 22$, we find that $2n-2 = 42$ is divisible by $3$.}: the only orders with $n \equiv 2 \mod 4$ with $n \leq 100$ for which $2n-2$ is not a sum of two squares are $n \in \{22, 34, 58, 70, 78, 94\}$.\looseness=1

Recent work of Chasiotis, Kounias and Farmakis \cite{CKF1} addresses the smallest of these cases, $n = 22$. Having identified two matrices with large determinant, they perform an exhaustive search for potential Gram matrices with determinant exceeding those of their examples, finding 25 such matrices. Each of these is excluded from being a Gram matrix, and thus the maximal determinant is established to be $40 \cdot 20^{10}$, with two inequivalent Gram matrices being realisable. This should be compared to the bound $42 \cdot 20^{10}$. To our knowledge, the maximal determinant at any order greater than $22$ satisfying $n \equiv 2 \mod 4$ for which $2n-2$ is \textit{not} a sum of two squares remains open.

\section{Ehlich's analysis of the case \texorpdfstring{$n \equiv 3 \mod 4$}{n = 3 mod 4}}
\label{Sec3mod4}

Ehlich develops a bound for maximal determinants when $n \equiv 3 \mod 4$ through a careful analysis
of the minors of such a matrix. These results were previously translated into English and the analysis sharpened by
Brent, Osborn, Orrick and Zimmerman \cite{BOOZ}, but we include our analysis (which differs slightly from theirs) for the sake of
completeness.

For each integer $1 \leq m \leq n$, define the following set of $m \times m$ matrices:
\[ \mathcal{C}_{m} = \{ M \mid m_{i,i} = n, \; m_{i,j} \equiv 3 \mod 4, \; |m_{i,j} | < n \} .\]
The $m \times m$ minors of an $n \times n$ matrix with entries in $\{\pm 1\}$ all belong to $\mathcal{C}_{m}$,
though the set does not consist exclusively of Gram matrices. We will study the maximal determinant of an element of $\mathcal{C}_{m}$,
via inductive methods of the type that we have seen previously. In contrast to previous proofs, the bounds typically cannot be met
with equality. Denote by $\gamma_{m}$ the maximal determinant of an element of $\mathcal{C}_{m}$.

\begin{proposition}\label{growth}
For all $1 \leq m \leq n-1$, we have $\gamma_{m+1} > (n-3) \gamma_{m}$.
\end{proposition}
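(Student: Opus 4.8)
The plan is to pick an element $M \in \mathcal{C}_m$ achieving $\det(M) = \gamma_m$ and build from it an element of $\mathcal{C}_{m+1}$ whose determinant exceeds $(n-3)\gamma_m$. The natural candidate is the bordered matrix
\[
M' = \begin{pmatrix} M & v \\ v^{\top} & n \end{pmatrix},
\]
where $v$ is a column vector of length $m$ with entries in $\{\pm 1, -3, \ldots\}$ chosen so that $M' \in \mathcal{C}_{m+1}$; the simplest choice is $v = -j_m$ (all entries $-1$, which is $\equiv 3 \bmod 4$), or $v$ with all entries $+1$, whichever gives the larger determinant. Since the determinant is linear in the last row, I would split off the diagonal entry in the style of Equation \eqref{recursive}: $\det(M') = n\det(M) + N$, where $N$ is the determinant of $M'$ with its bottom-right entry set to zero. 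So it suffices to show that one can choose $v \in \{\pm j_m\}$ making $N \geq -3\gamma_m$, or more precisely that $\det(M') > (n-3)\gamma_m$ for at least one of the two sign choices.

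The key observation is a symmetry/averaging trick: if $N(v)$ denotes the correction term for border vector $v$, then $N(j_m)$ and $N(-j_m)$ are related. Expanding $N$ along its zeroed last row via cofactors, $N(v) = -\sum_{i,j} v_i v_j \, C_{ij}$ for suitable cofactors $C_{ij}$ of $M$ (those not involving the last row/column), so $N(-j_m) = N(j_m)$ — the two sign choices give the \emph{same} correction term, since $N$ is quadratic in $v$. That does not immediately help, so instead I would note that $N(v)/\det(M)$ can be rewritten, using the cofactor/inverse relation, as $-v^{\top} M^{-1} v \cdot \det(M)$ up to the $(-1)$ bookkeeping seen in the proof of Theorem \ref{Had}; concretely $\det(M') = \det(M)(n - v^{\top} M^{-1} v)$ by the Schur complement formula (Section 0.8, \cite{HornJohnson}), valid since $\det(M) = \gamma_m > 0$ forces $M$ invertible. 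Thus I need a border vector $v \in \{\pm 1\}^m$ with $v^{\top} M^{-1} v < 3$. Taking $v = \pm j_m$, we get $v^{\top} M^{-1} v = j_m^{\top} M^{-1} j_m =: s$, the sum of all entries of $M^{-1}$, and this value is independent of the sign choice — so the bordering construction as stated does not obviously exploit a sign choice, and I must bound $s$ directly.

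The main obstacle is therefore bounding $j_m^{\top} M^{-1} j_m$ from above by something strictly less than $3$. Here I would use that $M$ is "close to" the symmetric positive definite regime: its diagonal is $n$ and off-diagonal entries have magnitude at least $3$ (since they are $\equiv 3 \bmod 4$ and nonzero) but also at most $n-1$. In the positive definite case one could diagonalise, but $\mathcal{C}_m$ need not consist of Gram matrices, so instead I expect to argue via a more robust estimate: perturb $M$ by subtracting a rank-one or low-rank correction to reduce to a case where the quadratic form is controlled, or bound $|s|$ crudely using Cramer's rule, $s = \sum_{i,j} (\mathrm{cof}_{ij} M)/\det M$, together with a minor-growth estimate $|\mathrm{cof}_{ij} M| \leq \gamma_{m-1}$ and an inductive lower bound on $\gamma_m$. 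If the bound $s < 3$ cannot be obtained unconditionally, the fallback — and probably the cleanest route — is to choose the border vector more cleverly: among all $2^m$ sign vectors $v$, an averaging argument gives $2^{-m}\sum_v v^{\top} M^{-1} v = \mathrm{tr}(M^{-1})$, so \emph{some} $v$ has $v^{\top} M^{-1} v \leq \mathrm{tr}(M^{-1})$, and $\mathrm{tr}(M^{-1})$ is small (of order $m/n \leq 1$) because $M$ has large diagonal; one then checks this $v$ still has entries $\equiv 3 \bmod 4$ after adjusting (entries $\pm 1$ are both $\equiv 3$ or $\equiv 1$; replacing the $+1$'s by $-3$'s if parity demands). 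Making this averaging argument rigorous for the possibly-non-symmetric matrices in $\mathcal{C}_m$, and verifying the resulting bordered matrix genuinely lies in $\mathcal{C}_{m+1}$, is the step I expect to require the most care.
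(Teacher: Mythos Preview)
Your approach has a genuine gap. The averaging argument over $v \in \{\pm 1\}^m$ is the crux of your plan, but it does not produce elements of $\mathcal{C}_{m+1}$: the off-diagonal congruence condition is $m_{i,j} \equiv 3 \bmod 4$, and while $-1 \equiv 3 \bmod 4$, the entry $+1 \equiv 1 \bmod 4$ is forbidden. Your proposed fix of replacing each $+1$ by $-3$ destroys the averaging bound, since the quadratic form $v^{\top}M^{-1}v$ is not invariant under this substitution. If instead you average over the legitimate set $v \in \{-1,3\}^m$, writing $v = j_m + 2w$ with $w \in \{\pm 1\}^m$, the average becomes $j_m^{\top}M^{-1}j_m + 4\,\mathrm{tr}(M^{-1})$, which is strictly harder to control than either term alone. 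Separately, the claim that $\mathrm{tr}(M^{-1})$ is ``of order $m/n$'' is not justified: matrices in $\mathcal{C}_m$ need not be symmetric or positive definite, and even the nicest candidate $(n+1)I_m - J_m$ gives $j_m^{\top}M^{-1}j_m = m/(n+1-m)$, which exceeds $3$ once $m$ is a fixed fraction of $n$. So neither the single-border choice $v = -j_m$ nor the averaging route closes.

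The paper sidesteps all of this with a different idea: rather than bordering the maximiser $M \in \mathcal{C}_m$ by an arbitrary vector, it \emph{duplicates} the last row and column of $M$, placing a $3$ in the new off-diagonal corner. Writing $M = \begin{pmatrix} A & a \\ a^{\top} & n \end{pmatrix}$ and setting
\[
C = \begin{pmatrix} A & a & a \\ a^{\top} & n & 3 \\ a^{\top} & 3 & n \end{pmatrix},
\]
a direct row-reduction yields $\det(C) = 2(n-3)\gamma_m - (n-3)^2\det(A)$. The proof is then by induction on $m$: the hypothesis $\gamma_m > (n-3)\gamma_{m-1}$ together with $\det(A) \leq \gamma_{m-1}$ makes the second summand strictly smaller than the first, giving $\det(C) > (n-3)\gamma_m$. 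The duplication trick is what makes the determinant computable in closed form; your Schur-complement reduction to bounding a quadratic form in $M^{-1}$ does not exploit any such structure.
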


\begin{proof}
The proof is by induction. Observe first that
\[ \gamma_{1} = n, \quad \gamma_{2} = \det \begin{pmatrix}
 \phantom{-}n & -1 \\ -1 & \phantom{-}n
\end{pmatrix} = n^{2}-1 > n(n-3) .\]

Suppose that $\gamma_{m} > (n-3)\gamma_{m-1}$, and let $C$ be the following $(m + 1) \times (m+1)$ matrix, chosen such that the
top-left $m \times m$ minor is $\gamma_{m}$, and the last row and column are as displayed:
\[ C = \begin{pmatrix}
 A & a & a \\ a^{\top} & n & 3 \\ a^{\top} & 3 & n
\end{pmatrix}\!.\]
We evaluate the determinant as follows:
\begin{align*}
 \det(C) & =  \det \begin{pmatrix}
 A & a & a \\ a^{\top} & n & 3 \\ 0 & 0 & n-3
\end{pmatrix}
+ \det \begin{pmatrix}
 A & a & a \\ a^{\top} & n & 3 \\ a^{\top} & 3 & 3
\end{pmatrix}\\
& =  (n-3) \gamma_{m} + \det \begin{pmatrix}
 A & a & 0 \\ a^{\top} & n & 3-n \\ 0 & 3-n & n-3
\end{pmatrix}\\
& =  (n-3) \gamma_{m} + ( (n-3) \gamma_{m} - (n-3)^{2} \det(A) ) .
\end{align*}
But $\det(A) \leq \gamma_{m-1}$ by definition, so the second term is (strictly) positive by the induction hypothesis. Hence $\gamma_{m+1} \geq \det(C) \geq (n-3)\gamma_{m}$.
\end{proof}

Next, we show that an element of $\mathcal{C}_{m}$ having maximal determinant has, without loss of generality, all off-diagonal elements from the set $\{ -1, 3\}$.

\begin{proposition}\label{EhlichEntries}
If $\det(C) = \gamma_{m}$ then, without loss of generality, $c_{i,j} \in \{ -1, 3\}$.
\end{proposition}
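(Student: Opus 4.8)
The plan is to start from a maximal element $C$ and repeatedly push any off-diagonal entry lying outside $\{-1,3\}$ into that set without decreasing the determinant; iterating produces the desired matrix. Two reductions come for free. First, $\mathcal{C}_m$ is closed under simultaneous permutation of rows and columns, an operation preserving the determinant, so ``without loss of generality'' I may place any distinguished entry wherever is convenient. Second, among all maximal elements I may fix one minimising $\sum_{i\neq j}|c_{i,j}|$, so that no \emph{lossless} reduction of a large entry is available; a genuine reduction would then contradict maximality, and a lossless one would contradict this choice. (The Gram-matrix application only ever needs symmetric elements, so I treat the matrices as symmetric throughout.)

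Next I would isolate the dependence on a single entry. Fix $i\neq j$ with $t_0:=c_{i,j}\notin\{-1,3\}$ and regard $\det C$ as a function $P(t)$ of the common value $t$ of the $(i,j)$ and $(j,i)$ entries, all else frozen. Since the determinant is multilinear and $t$ occurs in just two positions, $P(t)=\alpha t^2+\beta t+\delta$ is quadratic; the Desnanot--Jacobi identity gives $\alpha=-\det\!\big(C^{(i,j)}\big)$, the negative of the principal minor omitting rows and columns $i,j$, and $\beta$ is twice the $(i,j)$-cofactor of the matrix obtained from $C$ by zeroing those two entries. Every $s\equiv 3\pmod{4}$ with $|s|<n$ is an admissible value, so $P(s)\le\gamma_m=P(t_0)$: the actual entry $t_0$ maximises $P$ over the admissible arithmetic progression, hence is the admissible value nearest the vertex $t^{\ast}=-\beta/2\alpha$. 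Comparing $P(t_0)$ with $P(t_0\pm4)$ forces $\alpha\le0$ — so $P$ is concave and $|t_0-t^{\ast}|\le2$ — unless $t_0$ sits at an end of the admissible range, $|t_0|=n-4$; that boundary case I would dispose of separately, since then the $2\times2$ principal minor on rows $i,j$ equals $n^2-t_0^2=8n-16$, and Fischer's inequality (Theorem~\ref{Fischer}) together with the growth estimate $\gamma_m>(n-3)^2\gamma_{m-2}$ of Proposition~\ref{growth} gives a contradiction for all but finitely many small $n$, which are checked directly.

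The remaining step — and the one I expect to be the real obstacle — is to show that the vertex $t^{\ast}$ lies in $(-3,5)$, which is exactly what pins the nearest admissible value $t_0$ into $\{-1,3\}$. Equivalently one must bound the linear coefficient $\beta$ against the quadratic coefficient $\alpha$, i.e.\ the off-principal cofactor against the complementary principal minor; single-entry variation alone yields only the concavity above and does \emph{not} suffice, since the growth lemma pushes minors in the opposite direction. To close this gap I would invoke maximality of $C$ against a whole-row (and conjugate-column) modification: writing $C=\left(\begin{smallmatrix}A & b\\ b^{\top} & n\end{smallmatrix}\right)$ one has $\det C=n\det A-b^{\top}\operatorname{adj}(A)\,b$, so $b$ must minimise the quadratic form $b\mapsto b^{\top}\operatorname{adj}(A)\,b$ over admissible vectors; combining this with the structure of the maximal block $A$ supplied by induction on $m$ (in particular controlling how far from a multiple of the identity $\operatorname{adj}(A)$ can be) should confine each coordinate of $b$ to $\{-1,3\}$. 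Making that coordinate-geometry argument precise — rather than the quadratic-in-one-entry argument — is where the work lies.
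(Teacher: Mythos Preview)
Your single-entry variation is a natural opening, and you correctly identify that it only buys concavity, not the location of the vertex $t^{\ast}$. The gap is real, and your proposed fix does not close it. The inductive step you sketch presumes that the complementary principal block $A$ of a maximal $C$ is itself maximal in $\mathcal{C}_{m-1}$, so that its structure (and hence that of $\operatorname{adj}(A)$) is controlled by the inductive hypothesis. But nothing guarantees this: principal submatrices of a maximiser need not be maximisers, so you cannot invoke the induction to constrain $\operatorname{adj}(A)$. Without that, the quadratic-form minimisation over admissible $b$ has no purchase, and the argument is circular.

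The paper sidesteps the vertex problem entirely by varying not one entry but an entire row--column pair, via \emph{duplication}. Place the bad entry $\alpha$ at position $(m-1,m)$, write
\[
C_{1}=\begin{pmatrix} A & a_{1} & a_{2}\\ a_{1}^{\top} & n & \alpha\\ a_{2}^{\top} & \alpha & n\end{pmatrix},
\]
and assume (swapping the last two rows and columns if needed) that the principal minor on $\{1,\dots,m-2,m\}$ is at least that on $\{1,\dots,m-1\}$. Now form $C_{2}$ by replacing $a_{1}$ with a second copy of $a_{2}$ and replacing $\alpha$ with $3$. Expand $\det C_{1}$ by splitting the last row as $(a_{1}^{\top},3,\alpha)+(0,n-3,0)$ and then the second-to-last row similarly; one of the three resulting terms has a singular $2\times2$ block $\left(\begin{smallmatrix}3&\alpha\\ \alpha&\alpha^{2}/3\end{smallmatrix}\right)$ in the bottom-right corner, and Fischer's inequality (applied to a matrix whose top-left $(m-1)\times(m-1)$ block is positive definite) forces that term to be non-positive. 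Discarding it yields
\[
\det C_{1}\le (n-3)\det\begin{pmatrix}A&a_{2}\\a_{2}^{\top}&n\end{pmatrix}+\Bigl(n-\tfrac{\alpha^{2}}{3}\Bigr)\det\begin{pmatrix}A&a_{1}\\a_{1}^{\top}&3\end{pmatrix}.
\]
The same expansion of $\det C_{2}$ gives the identical expression with $\alpha^{2}/3$ replaced by $3$ and $a_{1}$ replaced by $a_{2}$ in the second factor; the assumed inequality between the two $(m-1)$-minors and $\alpha^{2}/3\ge 3$ then give $\det C_{2}\ge\det C_{1}$, strictly when $|\alpha|>3$. No induction, no vertex estimate, and no control of $\operatorname{adj}(A)$ is needed: the duplication move is the missing idea.
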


\begin{proof}
Suppose that $C_{1}$ is a positive definite matrix in $\mathcal{C}_{m}$ with some entry $\alpha \not\in \{-1,3\}$, and that $\det(C_{1}) = \gamma_{m}$. Then up to conjugation by a permutation matrix we may assume that
\[
\def\arraystretch{1.1}
C_{1} = \begin{pmatrix}
 A & a_{1} & a_{2} \\ a_{1}^{\top} & n & \alpha \\ a_{2}^{\top} & \alpha & n
\end{pmatrix}
\]
where $| \alpha | \geq 3$ and we further assume that
\begin{equation}\label{2dets}
\det \begin{pmatrix}
 A & a_{2} \\ a_{2}^{\top} & n
\end{pmatrix} \geq \det \begin{pmatrix}
 A & a_{1} \\ a_{1}^{\top} & n
\end{pmatrix}\!.
\end{equation}
If this does not hold, we may permute the final two rows and columns of $C_{1}$ and replace it with a similar matrix with the required property. By the argument of Proposition \ref{growth}, both matrices of Equation \eqref{2dets} are positive definite. Then let
\[
{\renewcommand\arraystretch{1.1}
C_{2} = \begin{pmatrix}
 A & a_{2} & a_{2} \\ a_{2}^{\top} & n & 3 \\ a_{2}^{\top} & 3 & n
\end{pmatrix}\!.
}
\]
We will show that $\det(C_{2}) \geq \det(C_{1})$, contradicting the assumption that $C_{1}$ has maximal determinant. As before, we use that the determinant is linear in the rows:
{\renewcommand\arraystretch{1.1}
\begin{align*}
 \det (C_{1}) & =  \det \begin{pmatrix}
 A & a_{1} & a_{2} \\ 0& n-3 & 0 \\ a_{2}^{\top} & \alpha & n
\end{pmatrix}
+ \det \begin{pmatrix}
 A & a_{1} & a_{2} \\ a_{1}^{\top} & 3 & \alpha \\ a_{2}^{\top} & \alpha & n
\end{pmatrix} \\
& =  \det \begin{pmatrix}
 A & a_{1} & a_{2} \\ 0& n-3 & 0 \\ a_{2}^{\top} & \alpha & n
\end{pmatrix}
+ \det \begin{pmatrix}
 A & a_{1} & a_{2} \\ a_{1}^{\top} & 3 & \alpha \\ 0  & 0 & n-\alpha^{2}/3
\end{pmatrix} +
 \det \begin{pmatrix}
 A & a_{1} & a_{2} \\ a_{1}^{\top} & 3 & \alpha \\ a_{2}^{\top}  & \alpha & \alpha^{2}/3
\end{pmatrix}\!.
\end{align*}
}

Denote the rightmost term in the expansion above by $R$. We have established that the $(n-1) \times (n-1)$ submatrix at the top-left of $R$ is positive definite. So $R$ is positive definite if and only if its determinant is positive. But the bottom-right $2\times 2$ submatrix of $R$ is degenerate. So by Fischer's inequality, if $R$ were positive definite we would have $\det (R) \leq \det(A) \cdot 0$, which is a contradiction. Thus $\det (R) \leq 0$.

Discarding $\det(R)$ we have an upper bound for $\det(C_{1})$ as follows:
\begin{equation} \label{Cmax}
 \det(C_{1}) \leq (n-3)  \det \begin{pmatrix}
 A & a_{2}\\ a_{2}^{\top} & n
\end{pmatrix} + (n - \alpha^{2}/3) \det\begin{pmatrix}
 A & a_{1} \\ a_{1}^{\top} & 3
\end{pmatrix}\!.
\end{equation}
Compute, in the same fashion, the determinant of $C_{2}$:
{\renewcommand\arraystretch{1.1}
\begin{equation*}
\det(C_{2})  =
 \det  \begin{pmatrix}
 A & a_{2} & a_{2} \\ a_{2}^{\top}& n & 3 \\ 0 & 0& n-3
\end{pmatrix}
+ \det \begin{pmatrix}
 A & a_{2} & a_{2} \\ 0& n - 3 & 0 \\ a_{2}^{\top} & 3 & 3
\end{pmatrix}
+ \det \begin{pmatrix}
 A & a_{2} & a_{2} \\ a_{2}^{\top}& 3 & 3 \\ a_{2}^{\top} & 3 & 3
\end{pmatrix}\!.
\end{equation*}
}
Again the third term vanishes, and the first two may be evaluated as before:
\[ \det(C_{2}) =  (n-3)  \det \begin{pmatrix}
 A & a_{2}\\ a_{2}^{\top} & n
\end{pmatrix} + (n - 3) \det\begin{pmatrix}
 A & a_{2} \\ a_{2}^{\top} & 3
\end{pmatrix}\!.\]
Comparing  this with \eqref{Cmax} and recalling the inequality \eqref{2dets}, we get that $\det(C_{2}) \geq \det(C_{1})$ and this inequality is strict if $|\alpha| > 3$. We conclude that an element of maximal determinant in $\mathcal{C}_{m}$ has entries in the set $\{ -1, 3\}$.
\end{proof}

\begin{definition}
{\upshape
Let $J_{t}$ be the $t \times t$ matrix with all entries equal to $1$.
Define $B_{t} = (n-3) I_{t} + 3J_{t}$ to be an \textit{Ehlich-block} of size $t$.
An \textit{Ehlich-block matrix} is an $n \times n$ matrix with Ehlich-blocks along the diagonal,
and all other entries outside the Ehlich-blocks equal to $-1$. To each Ehlich-block matrix
there is associated a \textit{partition} of $n$, given by the Ehlich-block sizes.
}
\end{definition}

\begin{theorem} \label{blockmatrix}
If $\det(C_{m}) = \gamma_{m}$ then, up to similarity, $C_{m}$ is an Ehlich-block matrix.
\end{theorem}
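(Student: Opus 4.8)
The plan is to show that a determinant-maximising matrix $C_m \in \mathcal{C}_m$ must, up to simultaneous permutation of rows and columns, have its off-diagonal entries (all of which we may assume lie in $\{-1,3\}$ by Proposition \ref{EhlichEntries}) arranged so that the relation ``$c_{i,j}=3$'' on the index set $\{1,\dots,m\}$ is an equivalence relation; the equivalence classes are then precisely the Ehlich-blocks, and $-1$'s fill everything outside them. Reflexivity is vacuous (the diagonal entries are $n$, not $3$, but the blocks are defined so the diagonal sits inside them regardless), and symmetry is automatic since $C_m$ is a Gram-type symmetric matrix. So the entire content is \emph{transitivity}: if $c_{i,j}=3$ and $c_{j,k}=3$ then $c_{i,k}=3$. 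Equivalently, the configuration with $c_{i,j}=c_{j,k}=3$ but $c_{i,k}=-1$ cannot occur in a maximal matrix.

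First I would isolate this forbidden $3\times 3$ pattern. Suppose for contradiction that $C_m$ has maximal determinant $\gamma_m$ and contains indices $i,j,k$ with $c_{i,j}=c_{j,k}=3$ and $c_{i,k}=-1$. After conjugating by a permutation matrix, assume these are the last three indices, so
\[
C_m = \begin{pmatrix} A & a & b & c \\ a^{\top} & n & 3 & -1 \\ b^{\top} & 3 & n & 3 \\ c^{\top} & -1 & 3 & n \end{pmatrix},
\]
where $A$ is $(m-3)\times(m-3)$ and $a,b,c$ are the corresponding partial columns. The strategy, exactly as in Propositions \ref{growth} and \ref{EhlichEntries}, is to produce a competitor matrix $C_m'$ with $\det(C_m') \geq \det(C_m)$ and strictly larger unless the pattern is already ``block-like''. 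The natural competitor replaces the offending $-1$ by a $3$ and simultaneously duplicates one of the repeated columns, e.g.
\[
C_m' = \begin{pmatrix} A & b & b & b \\ b^{\top} & n & 3 & 3 \\ b^{\top} & 3 & n & 3 \\ b^{\top} & 3 & 3 & n \end{pmatrix},
\]
and one then expands $\det(C_m)$ and $\det(C_m')$ by linearity in the last rows/columns (splitting each row as a ``$(n-3)$-on-the-diagonal'' part plus a ``constant'' part, precisely the manoeuvre in the two preceding proofs). The bottom-right rank-degenerate $2\times 2$ or $3\times 3$ blocks that appear will, via Fischer's inequality (Theorem \ref{Fischer}), force the ``bad'' cross terms to be non-positive, exactly as $\det(R)\le 0$ was obtained in Proposition \ref{EhlichEntries}; discarding them yields an upper bound for $\det(C_m)$ that is matched or beaten by the analogous expansion of $\det(C_m')$, with strict gain coming from replacing $n-\text{(something)}$ by $n-3$ in the appropriate factor. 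A preliminary permutation of $i,j,k$ (as in inequality \eqref{2dets}) arranges that it is the smaller of two competing principal minors that gets suppressed, which is what makes the final comparison go the right way.

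Carrying out this comparison, positive-definiteness of the relevant principal submatrices is needed at each step; this is supplied by the growth estimate of Proposition \ref{growth}, which guarantees that any principal minor of a near-maximal matrix is strictly positive, hence (by Sylvester's criterion, as invoked in Theorem \ref{Wojtas}) positive definite. Once transitivity is established, the equivalence classes of ``$c_{i,j}=3$'' partition $\{1,\dots,m\}$; on each class the corresponding principal submatrix is $(n-3)I_t+3J_t=B_t$, and every entry joining two distinct classes is $-1$, so $C_m$ is exactly an Ehlich-block matrix with the associated partition of $m$. The main obstacle I anticipate is purely bookkeeping: the $3\times 3$ ``corner'' makes the linearity expansion branch into more terms than in Proposition \ref{EhlichEntries}, and one must check that \emph{every} leftover term is killed by Fischer (not just one), and that the surviving terms really do compare in the claimed direction for all sign patterns of $\alpha$-type entries that could still be present before Proposition \ref{EhlichEntries} is applied — so it is cleanest to apply Proposition \ref{EhlichEntries} first (reducing to entries in $\{-1,3\}$) and only then run the transitivity argument, which keeps the casework finite and mechanical.
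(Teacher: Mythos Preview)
Your proposal is correct and is essentially the paper's argument: reduce the claim to transitivity of the relation ``$c_{i,j}=3$'', isolate the offending $3\times 3$ corner, build a competitor in $\mathcal{C}_m$, expand both determinants by row-linearity (peeling $n-3$ off the last two diagonal entries), and kill the residual cross term with Fischer's inequality on a rank-deficient $2\times 2$ block. The only difference is cosmetic: the paper replaces just \emph{one} of the two non-hub columns by the hub column (so its $C_2$ carries border columns $a_1,a_3,a_3$ rather than your $b,b,b$); this makes one of the two surviving terms literally identical in $C_1$ and $C_2$, so the WLOG minor comparison \eqref{2dets} only has to handle a single inequality, and the bookkeeping you rightly worry about stays short.
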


\begin{proof}
We follow the same proof strategy as in Proposition \ref{EhlichEntries}: we explicitly produce a matrix with a larger determinant from an element of $C_{m}$ which is not an Ehlich-block matrix. Up to simultaneous permutation of rows and columns we may assume that the matrices have the form
{\renewcommand\arraystretch{1.1}
\[ C_{1} =  \begin{pmatrix}
 A & a_{1} & a_{2} & a_{3} \\
a_{1}^{\top} & n & -1  & 3 \\
a_{2}^{\top}& -1 & n & 3 \\
a_{3}^{\top} & 3 & 3 & n
\end{pmatrix}\!, \quad
C_{2} = \begin{pmatrix}
 A & a_{1} & a_{3} & a_{3} \\
a_{1}^{\top} & n & 3  & 3 \\
a_{3}^{\top}& 3 & n & 3 \\
a_{3}^{\top} & 3 & 3 & n
\end{pmatrix}\!.
\]
}
Without loss of generality, we assume that the principal minor obtained from deleting the last row and column of $C_{1}$ is less than or equal to the corresponding principal minor of $C_{2}$. (If not, we relabel the rows of $C_{1}$ and redefine $C_{2}$.) We evaluate the determinant of $C_{1}$ using linearity in the rows:
{\renewcommand\arraystretch{1.1}
\begin{align*} \det C_{1} &=  \det \begin{pmatrix}
 A & a_{1} & a_{2} & a_{3} \\
a_{1}^{\top} & n & -1  & 3 \\
a_{2}^{\top}& -1 & n & 3 \\
0 & 0 & 0 & n-3
\end{pmatrix} +
 \det\begin{pmatrix}
 A & a_{1} & a_{2} & a_{3} \\
a_{1}^{\top} & n & -1  & 3 \\
a_{2}^{\top}& -1 & n & 3 \\
a_{3}^{\top} & 3 & 3 & 3
\end{pmatrix} \\
& =
\det \begin{pmatrix}
 A & a_{1} & a_{2} & a_{3} \\
a_{1}^{\top} & n & -1  & 3 \\
a_{2}^{\top}& -1 & n & 3 \\
0 & 0 & 0 & n-3
\end{pmatrix} +
\det\begin{pmatrix}
 A & a_{1} & a_{2} & a_{3} \\
a_{1}^{\top} & n & -1  & 3 \\
0& 0 & n-3 & 0 \\
a_{3}^{\top} & 3 & 3 & 3
\end{pmatrix} +
 \det\begin{pmatrix}
 A & a_{1} & a_{2} & a_{3} \\
a_{1}^{\top} & n & -1  & 3 \\
a_{2}^{\top}& -1 & 3 & 3 \\
a_{3}^{\top} & 3 & 3 & 3
\end{pmatrix} \\
& =  (n-3)\begin{pmatrix}\!  \det \begin{pmatrix}
 A & a_{1} & a_{2} \\
a_{1}^{\top} & n & -1  \\
a_{2}^{\top}& -1 & n
\end{pmatrix} +
 \det\begin{pmatrix}
 A & a_{1} & a_{3} \\
a_{1}^{\top} & n  & 3 \\
a_{3}^{\top} & 3 & 3
\end{pmatrix}\!\end{pmatrix} + \det\begin{pmatrix}
 A & a_{1} & a_{2} & a_{3} \\
a_{1}^{\top} & n & -1  & 3 \\
a_{2}^{\top}& -1 & 3 & 3 \\
a_{3}^{\top} & 3 & 3 & 3
\end{pmatrix}\!.
\end{align*}
}
As before, the rightmost term in this expression violates Fischer's inequality, but has a positive definite submatrix of order $m-1$,
so has non-positive determinant. Expanding the determinant of $C_{2}$ in the same way gives an expression where each term dominates
the corresponding term of $\det(C_{1})$, completing the proof.
\end{proof}

Having established the maximal determinant of a matrix in the class $\mathcal{C}_{n}$ has the structure of Theorem \ref{blockmatrix},
Ehlich evaluates the determinant in terms of the corresponding partition $n = r_{1} + r_{2} + \ldots + r_{s}$, obtaining
\[ \det(C) = (n-3)^{n-s} \prod_{i=1}^{s} ( n- 3 + 4r_{i} ) \biggl( 1 - \sum_{i=1}^{s} \frac{r_{i}}{n-3+4r_{i}} \biggr).\]
Via a lengthy and intricate analysis, Ehlich obtains the following explicit result.

\begin{theorem}[Satz 3.3, \cite{Ehlich}]\label{EhlichBound}
For $n \equiv 3 \mod 4$, the Ehlich-block matrix of maximal determinant has the following structure:
\begin{enumerate}[itemsep=0.5pt]
	\item The partition of $n$ is into $f(n)$ parts where $f(n) = 5$ for $n = 7, 11$ and $f(n) = 6$ for $11 \leq n \leq 59$ and $f(n) = 7$ for all $n \geq 59$.
	\item Each part has size $\lfloor n/f(n) \rfloor$ or $\lceil n/f(n) \rceil$, and this partition is uniquely determined.
\end{enumerate}

For $n \geq 63$, an explicit upper bound on the maximal determinant of an $n \times n$ matrix $M$ is
\[ \det(MM^{\top}) \leq \frac{4 \cdot 11^{6}}{7^{7}} n(n-1)^{6}(n-3)^{n-7} .\]
\end{theorem}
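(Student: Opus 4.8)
The plan is to deduce the stated bound from parts 1 and 2 of the theorem — the content of Ehlich's intricate case analysis, which I take as given — together with the closed form for $\det(C)$ displayed just above the theorem. By Theorem \ref{blockmatrix}, $\gamma_n$ is realised by an Ehlich-block matrix, and by parts 1 and 2, for $n\ge 63$ (so certainly $n\ge 59$) that matrix has exactly $7$ diagonal blocks, of sizes $\lfloor n/7\rfloor$ or $\lceil n/7\rceil$. Since $\det(MM^{\top})\le\gamma_n$ for every $\{\pm1\}$ matrix $M$ of order $n\equiv 3\bmod 4$ — this being why $\mathcal{C}_n$ was introduced, the normalisation of Proposition \ref{mod4} placing $MM^{\top}$ in $\mathcal{C}_n$ — it suffices to estimate $\det(C)$ for this single matrix.

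First I would set $q=\lfloor n/7\rfloor\ (\ge 9)$ and $t=n-7q\in\{0,\dots,6\}$, so that the partition has $t$ parts of size $q+1$ and $7-t$ parts of size $q$. Substituting $r_i\in\{q,q+1\}$ into
\[
\det(C)=(n-3)^{\,n-s}\prod_{i=1}^{s}(n-3+4r_i)\Bigl(1-\sum_{i=1}^{s}\tfrac{r_i}{n-3+4r_i}\Bigr),
\]
using $7q=n-t$ to write $n-3+4q=\tfrac17 D_1$ and $n+1+4q=\tfrac17 D_2$ with $D_1=11n-21-4t$ and $D_2=D_1+28$, and clearing the bracket over its common denominator, one arrives at
\[
\det(C)=\frac{(n-3)^{\,n-7}}{7^{7}}\;D_1^{\,6-t}\,D_2^{\,t-1}\,Q_t(n),\qquad Q_t(n):=D_1D_2-(7-t)(n-t)D_2-t(n-t+7)D_1
\]
(for $t=0$, read $D_2^{t-1}Q_t(n)$ as $4n-21$, since $Q_0=(4n-21)D_2$). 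A direct check shows $Q_t$ is a quadratic in $n$ with leading coefficient $44$, so the right-hand side is $(n-3)^{n-7}/7^{7}$ times a degree-$7$ polynomial whose leading coefficient is $11^{5}\cdot 44=4\cdot 11^{6}$, matching the leading coefficient of $4\cdot 11^{6}n(n-1)^{6}$.

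After cancelling $(n-3)^{n-7}/7^{7}$, the claim becomes $D_1^{6-t}D_2^{t-1}Q_t(n)\le 4\cdot 11^{6}n(n-1)^{6}$, which I would verify by a short case analysis on $t\in\{0,\dots,6\}$. For $t=0$ it is immediate from $(11n-21)^{6}<11^{6}(n-1)^{6}$ and $4n-21<4n$. For $t\ge 1$ write $D_1=11(n-1)-(10+4t)$ and $D_2=11(n-1)+(18-4t)$; when $t\in\{1,5,6\}$ both lie below $11(n-1)$ and the bound $D_1^{6-t}D_2^{t-1}\le 11^{5}(n-1)^{5}$ is trivial, while for $t\in\{2,3,4\}$ the exponent of $D_1$ is at least $2$ and its deficit $10+4t$ outweighs the small surplus $18-4t$ carried by the at most three copies of $D_2$, giving $D_1^{6-t}D_2^{t-1}<11^{5}(n-1)^{5}$ for all $n\ge 63$; finally $Q_t(n)=44n^{2}-(203+32t)n+d_t$ with $d_t>0$, so $Q_t(n)\le 44n(n-1)$ for $n\ge 3$. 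Multiplying these estimates gives the bound.

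The main obstacle is exactly this last step: because the two degree-$7$ polynomials share the leading coefficient $4\cdot 11^{6}$, the inequality is won only through the sub-leading terms, so the floor/ceiling correction $t$ must be tracked with care. In particular one cannot replace the seven block sizes by $n/7$: the AM-GM estimate $\prod_i(n-3+4r_i)\le((11n-21)/7)^{7}$ together with any crude bound on the bracket already overshoots $4\cdot 11^{6}n(n-1)^{6}$, and it is the exact product $\prod_i(n-3+4r_i)=D_1^{7-t}D_2^{t}/7^{7}$ for the balanced integer partition that supplies the slack needed to close the estimate.
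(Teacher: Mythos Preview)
The paper does not actually prove this theorem: it records the determinant formula for an Ehlich-block matrix and then simply states Ehlich's result, noting that it follows ``via a lengthy and intricate analysis''. So there is no proof in the paper to compare your attempt against. What you have done is precisely the natural complement to the paper's exposition: you accept parts~1 and~2 (the output of Ehlich's optimisation over partitions) and then verify that the resulting closed form is bounded by $\tfrac{4\cdot 11^{6}}{7^{7}}\,n(n-1)^{6}(n-3)^{n-7}$ for $n\ge 63$. That derivation is correct, and your observation that the two degree-$7$ polynomials share the leading coefficient $4\cdot 11^{6}$ --- so the inequality must be won on sub-leading terms and the floor/ceiling parameter~$t$ cannot be discarded --- is exactly the point.

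One small slip: you write that for $t\in\{1,5,6\}$ ``both $D_{1}$ and $D_{2}$ lie below $11(n-1)$'', but for $t=1$ one has $D_{2}=11(n-1)+14>11(n-1)$. This is harmless, since for $t=1$ the exponent of $D_{2}$ is $t-1=0$ and only $D_{1}^{5}\le 11^{5}(n-1)^{5}$ is needed; the cases $t\in\{5,6\}$ are as you describe. With that wording corrected, your case analysis on $t\in\{0,\dots,6\}$ goes through: for $t\in\{2,3,4\}$ the deficit $10+4t$ in the $6-t\ge 2$ copies of $D_{1}$ does dominate the surplus $18-4t$ in the $t-1\le 3$ copies of $D_{2}$ for all relevant $n$, and your computation $Q_{t}(n)=44n^{2}-(203+32t)n+d_{t}$ with $d_{t}>0$ for $t\ge 1$ gives $Q_{t}(n)\le 44n(n-1)$.
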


In fact, no matrices are known which achieve the bound given by Ehlich.
Inspecting the approximations made during the proof, this is perhaps unsurprising:
already in the $n = 2$ case of Proposition \ref{growth}, the approximations are not sharp.
Detailed but elementary analysis of the proof of Theorem \ref{EhlichBound} shows that equality in the bound could be achieved if
and only if $n = 7m$. Cohn \cite{Cohn2000} has shown using number theoretic techniques that the Ehlich bound is integral only when $n$ is of
the form $112t^{2} \pm 28t + 7$, while Tamura \cite{Tamura} has applied the Hasse--Minkowski criteria for equivalence of quadratic forms to show
that the smallest order at which the Ehlich bound could be achieved is at least $511$.
On the other hand, Ehlich's bound is asymptotically optimal up to some constant factor.

Orrick \cite{Orrick15} attributes the solution of the maximal determinant problem at orders $n = 3, 7$ to Williamson and $n = 11$ to Ehlich.
In the same paper, Orrick determines the maximal determinant of order $15$. The corresponding Gram matrix has three Ehlich-blocks of size
$4$ and one of size $3$. Later work of Brent, Osborn, Orrick and Zimmermann \cite{BOOZ} computed
the maximal determinant at order $19$. At both orders, the technique used is a careful refinement
of the method of Chadjipantelis, Kounias and Moyssiadis \cite{CKM}: a candidate matrix with large determinant is identified, its Gram matrix is computed, and all symmetric positive definite matrices with larger determinant are ruled out as Gram matrices. Interestingly,
at order $19$, the matrices with largest determinant are \emph{not} Ehlich-block matrices though they contain $18\times 18$ submatrices which are in Ehlich-block form. Bounds on the maximal determinant for $n \equiv 3 \mod 4$ are described in Table \ref{3mod4tab} at the end of the paper.

\subsection{Improved lower bounds for \texorpdfstring{$n \equiv 3 \mod 4$}{n = 3 mod 4}\label{newbounds}}

We conclude with an investigation of direct constructions for $\{\pm 1\}$ matrices with $n \equiv 3 \mod 4$
having large determinant. First we describe results of Brent, Osborn and Smith using the probabilistic method.
Recall that in Proposition \ref{FK}, a Hadamard matrix was augmented by a row and
column of 1’s to obtain a matrix with $n \equiv 1 \mod 4$ and large determinant.
Even when using the optimal Hadamard matrices for this method (those with maximal excess),
the ratio of the determinant obtained to the bound of Corollary \ref{Barba} tends to zero as $n$ tends to infinity.
A remarkable generalisation of this result was obtained by Brent, Osborn and Smith \cite{BrentOsbornProb}, in which
multiple rows and columns are added to a Hadamard matrix. Columns are chosen uniformly at
random, while the rows added are chosen deterministically. Via careful analysis, the authors
show that the ratio of the determinant to the Hadamard bound does \textbf{not} tend to $0$
as $n$ tends to infinity. The reader is referred to the original paper for the proof of the following result.

\begin{theorem}[Theorem 3.6, \cite{BrentOsbornProb}]\label{BrentOsbornThm}
If $0 \leq d \leq 3$, and $h$ is the order of a Hadamard matrix then there exists a matrix $M$ of order $n = h+d$ such that
\[
\left( \frac{2}{e\pi}\right)^{d/2} n^{n/2} \leq \det(M) \leq n^{n/2}\,.
\]
\end{theorem}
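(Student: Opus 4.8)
The plan is to realise $M$ by bordering a Hadamard matrix of order $h$ with $d$ extra rows and columns and to compute $\det(M)$ through a Schur complement. Write $M = \begin{pmatrix} H & R \\ S & T \end{pmatrix}$ with $H$ Hadamard of order $h$, and $R$ ($h\times d$), $S$ ($d\times h$), $T$ ($d\times d$) all with entries in $\{\pm1\}$, to be chosen. The upper bound $\det(M)\le n^{n/2}$ is exactly Theorem \ref{Had}, and we may negate a row of $M$ at the end to ensure $\det(M)>0$. For the lower bound, since $H$ is Hadamard we have $H^{-1}=h^{-1}H^{\top}$ and $|\det(H)|=h^{h/2}$, so the Schur complement formula gives $\det(M)=\pm\, h^{h/2}\det(C)$ where $C:=T-h^{-1}SH^{\top}R$ is $d\times d$. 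Thus it suffices to choose the border so that $|\det(C)|\ge (2n/\pi)^{d/2}$: indeed $n^{n/2}/h^{h/2}=(1+d/h)^{h/2}(h+d)^{d/2}\le e^{d/2}n^{d/2}$ since $(1+x)^{1/x}\le e$, and hence $h^{h/2}(2n/\pi)^{d/2}\ge (2/(e\pi))^{d/2}n^{n/2}$.

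To make $|\det(C)|$ large I would arrange that $C$ is diagonally dominant with large diagonal. Set $W:=SH^{\top}$, with rows $W_1,\dots,W_d$ and let $H_1,\dots,H_h$ be the rows of $H$; then $\|W_i\|_2^2=(SH^{\top}HS^{\top})_{ii}=h\|S_i\|_2^2=h^2$, so $\|W_i\|_1\le h^{3/2}$, and the first task is to choose the rows $S_i$ so that $\|W_i\|_1$ is nearly this large. Averaging over $S_i\in\{\pm1\}^{h}$ gives $\mathbb{E}\,\|W_i\|_1=\sum_{k=1}^{h}\mathbb{E}\,|\langle S_i,H_k\rangle|=h\cdot\mathbb{E}\,|\epsilon_1+\cdots+\epsilon_h|$, and the classical estimate $\mathbb{E}\,|\epsilon_1+\cdots+\epsilon_h|=\sqrt{2h/\pi}\,(1+O(1/h))$ for a Rademacher sum, together with a concentration estimate for $\|W_i\|_1$, shows that all $d$ rows of $S$ can be chosen simultaneously with $\|W_i\|_1\ge \sqrt{2/\pi}\,h^{3/2}(1-o(1))$. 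Now take $T=J_d$ and choose the signs in column $i$ of $R$ opposite to those of $W_i$; then the $(i,i)$ entry of $C$ equals $1+h^{-1}\|W_i\|_1\ge 1+\sqrt{2h/\pi}(1-o(1))$, while the $(i,j)$ entry for $i\neq j$ equals $1+h^{-1}\langle W_i,\operatorname{sign}(W_j)\rangle$, which is $O(1)$ because distinct rows of $S$ can be taken so that $W_i$ and $W_j$ are nearly uncorrelated (an incoherence estimate). Expanding $\det(C)=\det(D+E)$ with $D$ the diagonal part of $C$ then yields $|\det(C)|\ge (2h/\pi)^{d/2}(1-o(1))$, and a little more care (discussed below) upgrades this to $|\det(C)|\ge (2n/\pi)^{d/2}$.

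Assembling, $|\det(M)|=h^{h/2}|\det(C)|\ge (2/(e\pi))^{d/2}n^{n/2}$, and after negating a row if necessary $\det(M)$ is positive; the upper bound is automatic. The finitely many small orders $n$ are checked by hand; the case $d=0$ is trivial ($M=H$), and when $d=1$ the argument collapses to the familiar step of choosing the new row $s$ to maximise $\|Hs\|_1$ and the new column to be $-\operatorname{sign}(Hs)$.

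The main obstacle is the ``little more care'': extracting the exact constant $(2/(e\pi))^{d/2}$ rather than merely the correct growth rate. The leading term $(2h/\pi)^{d/2}$ falls short of $(2n/\pi)^{d/2}$ by a factor $1-O(d^2/n)$ coming from the inequality $(1+d/h)^{h/2}<e^{d/2}$, so the proof only closes if the second-order gain — the ``$+1$'' contributed by taking $T=J_d$, worth a factor $1+\Theta(h^{-1/2})$ in $\det(C)$ — genuinely dominates the competing second-order losses: the off-diagonal entries of $C$, the finite-$h$ deficit in $\mathbb{E}\,|\epsilon_1+\cdots+\epsilon_h|$ relative to $\sqrt{2h/\pi}$, and the fluctuation that has to be controlled in order to choose all $d$ rows of $S$ well at once. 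This balancing is precisely the delicate probabilistic analysis of Brent, Osborn and Smith, and it is also why $0\le d\le 3$ is the natural range: the number of off-diagonal interactions that must be estimated stays small, whereas bordering by more than three rows and columns would require a genuinely different argument. I would reproduce their estimate here, or else cite it for the sharp constant while presenting the Schur-complement reduction above as the conceptual skeleton.
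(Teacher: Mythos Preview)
The paper does not prove this theorem: immediately before the statement it says explicitly that ``the reader is referred to the original paper for the proof of the following result.'' So there is no in-paper argument to compare your proposal against. The surrounding prose does, however, describe the method informally --- bordering a Hadamard matrix with $d$ extra rows and columns, choosing one side uniformly at random and the other deterministically --- and your Schur-complement reduction, random choice of $S$, and deterministic choice $R_{\cdot,i}=-\operatorname{sign}(W_i)$ match that description (up to the harmless row/column transpose).

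That said, your proposal is a sketch rather than a proof, and you say as much. You correctly isolate the reduction $|\det(M)|=h^{h/2}|\det(C)|$ and the target $|\det(C)|\ge(2n/\pi)^{d/2}$, and you identify the first-order term $\sqrt{2h/\pi}$ via the Rademacher mean. But the step you label ``a little more care'' is the entire content of the Brent--Osborn--Smith theorem: you need (i) a quantitative lower bound on $\mathbb{E}|\epsilon_1+\cdots+\epsilon_h|$ valid for all $h$, not just the asymptotic $\sqrt{2h/\pi}$; (ii) a simultaneous choice of all $d$ rows of $S$ achieving the bound, which is an existence-from-expectation step that must survive the correlations; and (iii) control of the off-diagonal entries of $C$ that is uniform in $h$ rather than $O(1)$ with unspecified constant. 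Your final paragraph concedes all three points and proposes to cite the original for them. That is exactly what the survey does, so as a contribution to \emph{this} paper your write-up would simply duplicate the existing one-paragraph summary followed by the citation.
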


A more general result is possible in which the parameter $d$ is not bounded, but all results obtained by these methods contain a factor $(2/e\pi)^{d/2}$. Thus results obtained by this method decay exponentially in the distance to the nearest Hadamard matrix, but are independent of the order of the matrix. In the case $n \equiv 3 \mod 4$, we set $d = 3$ in Theorem \ref{BrentOsbornThm} to obtain a constant $0.1133$. But this comparison is to the Hadamard bound: as $n \rightarrow \infty$ the ratio of the Ehlich and Hadamard bounds tends to $0.4284$, so that for sufficiently large $n$, Theorem~\ref{BrentOsbornThm} shows that whenever there exists a Hadamard matrix of order $n$ there exists a matrix of order $n+3$ achieving at least $0.264$ of the Ehlich bound. As a special case of this result, we highlight the following.

\begin{corollary} \label{BrentCor}
If $p \equiv 3 \mod 4$ is a prime, then there exists a $\{\pm 1\}$ matrix of order $p+4$ which
achieves $0.264$ of the Ehlich bound.
\end{corollary}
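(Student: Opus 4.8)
The plan is to realise the corollary as the special case of the general statement preceding it in which the ambient Hadamard matrix has order $p+1$. Concretely: first I would exhibit a Hadamard matrix of order $p+1$, then append three rows and columns to it via Theorem~\ref{BrentOsbornThm}, and finally re-express the resulting determinant as a fraction of the Ehlich bound of Theorem~\ref{EhlichBound}.

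For the first step, note that $p \equiv 3 \bmod 4$ prime forces $p+1 \equiv 0 \bmod 4$, and a Hadamard matrix of this order is supplied by the Paley type~I construction recorded in Proposition~\ref{Hadexist}(2) (indeed, the skew-symmetric Paley matrix built just after Proposition~\ref{paleycore} does the job). Setting $h = p+1$ and $d = 3$, we have $h + d = p+4$, and since $p+4 \equiv 3 \bmod 4$ this is exactly the regime governed by the Ehlich bound. Applying Theorem~\ref{BrentOsbornThm} produces a $\{\pm 1\}$ matrix $M$ of order $n = p+4$ with $\det(M) \ge \left(\tfrac{2}{e\pi}\right)^{3/2} n^{n/2} > 0.1133\, n^{n/2}$.

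It remains to convert this bound, stated against the Hadamard value $n^{n/2}$, into a fraction of the Ehlich bound. Here I would invoke the estimate already carried out in the paragraph immediately before the corollary: by Theorem~\ref{EhlichBound}, for $n \equiv 3 \bmod 4$ one has $\det(MM^{\top}) \le \tfrac{4\cdot 11^{6}}{7^{7}}\, n(n-1)^{6}(n-3)^{n-7}$, and dividing through by $n^{n}$ and using $(1-1/n)^{6}\to 1$, $(1-3/n)^{n-7}\to e^{-3}$ shows that the ratio of the Ehlich to the Hadamard bound tends to $\tfrac{4\cdot 11^{6}}{7^{7} e^{3}} < 0.4284$. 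Combining this with the lower bound on $\det(M)$ yields the constant $0.264$ claimed in the corollary, precisely the arithmetic displayed just before it.

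There is no real obstacle here: the corollary is a direct specialisation, and essentially all of its content is already packaged in Theorem~\ref{BrentOsbornThm}. The one point meriting a remark, rather than a genuine difficulty, is that $0.264$ is the limiting value of the ratio; for the very small primes $p = 3, 7, 11$ (orders $7, 11, 15$) the explicit formula of Theorem~\ref{EhlichBound} is not yet in force, so there one would substitute the exact Ehlich bound at the relevant order and verify the inequality directly, while for all larger $p$ the asymptotic comparison above applies.
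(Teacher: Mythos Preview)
Your proposal is correct and matches the paper's approach exactly: the paper offers no separate proof of this corollary, presenting it explicitly as ``a special case'' of the discussion immediately preceding it, which is precisely the argument you give (Paley type~I Hadamard matrix of order $p+1$, then $d=3$ in Theorem~\ref{BrentOsbornThm}, then the $0.1133/0.4284$ conversion to the Ehlich bound). Your remark about the small primes and the asymptotic nature of the constant is, if anything, more careful than the paper, which simply writes ``for sufficiently large $n$'' in the preceding paragraph and leaves the corollary unqualified.
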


Now we analyse two constructions which have appeared in the literature: a construction of
Orrick, Solomon, Dowdeswell and Smith \cite{OSDS} using skew-Hadamard matrices (though we state the result only for Paley cores);
and a generalisation, inspired by Proposition \ref{FK}, of a construction of Neubauer and Radcliffe \cite{NeubauerRadcliffe}.

\begin{proposition} \label{OSDS}
Let $Q$ be the Paley core matrix of order $q$, let
\[ R = j_{q} \otimes
\begin{pmatrix} 1 & -1 & \phantom{-}1 & -1 \\ 1 & \phantom{-}1 & -1 & -1 \\ 1 & -1 & -1 & \phantom{-}1 \end{pmatrix}\!,
\quad
H_{4} =
\begin{pmatrix}
\phantom{-}1 & \phantom{-}1 & \phantom{-}1 & -1 \\
\phantom{-}1 & \phantom{-}1 & -1 & \phantom{-}1 \\
\phantom{-}1 & -1 & \phantom{-}1 & \phantom{-}1 \\
-1 & \phantom{-}1 & \phantom{-}1 & \phantom{-}1
\end{pmatrix}
\]
and let $P = Q \otimes H_{4} - I_{q} \otimes J_{4}$. Then
\vspace{-2pt}
\[ M = \begin{pmatrix} P & R^{\top} \\ R & J_{3} \end{pmatrix} \]
is a matrix of order $4q+3$ with $\det(MM^{\top}) = 16(4q)^{3q+3}(4q+16)^{q-1}$.
\end{proposition}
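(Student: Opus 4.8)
The plan is to compute $G := MM^{\top}$ in block form, observe that the coupling between the two diagonal blocks collapses to a single rank-one term, and then evaluate $\det(G)$ by a Schur-complement reduction combined with the tensor decomposition $\mathbb{R}^{4q} \cong \mathbb{R}^{q} \otimes \mathbb{R}^{4}$. First I would gather the ingredients. Since $q \equiv 3 \bmod 4$ the Paley core $Q$ is skew-symmetric, so Proposition~\ref{paleycore} gives $Q^{2} = -QQ^{\top} = J_{q} - qI_{q}$, and $Q$ has all row sums equal to $0$. The matrix $H_{4}$ is symmetric and equals $J_{4} - 2E$ for $E$ the reversal permutation matrix, whence $H_{4}^{2} = 4I_{4}$ and $H_{4}J_{4} = J_{4}H_{4} = 2J_{4}$. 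For the $3 \times 4$ block $K$ with $R = j_{q} \otimes K$, a direct check gives $KK^{\top} = 4I_{3}$, $K^{\top}K = 4I_{4} - J_{4}$, all row sums of $K$ equal to $0$, and column-sum vector $k := (3,-1,-1,-1)$. In particular $P = Q \otimes H_{4} - I_{q} \otimes J_{4}$ and $M$ have entries in $\{\pm 1\}$, so $M$ indeed has order $4q+3$.

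Next I would compute the four blocks of $G$, using $(A \otimes B)(C \otimes D) = AC \otimes BD$. Skew-symmetry of $Q$ makes the cross terms in $PP^{\top}$ cancel, giving $PP^{\top} = 4qI_{4q} - 4(J_{q} \otimes I_{4}) + 4(I_{q} \otimes J_{4})$; together with $R^{\top}R = J_{q} \otimes (K^{\top}K) = 4(J_{q} \otimes I_{4}) - J_{4q}$ this yields the $(1,1)$-block $4qI_{4q} + 4(I_{q} \otimes J_{4}) - J_{4q}$. The $(2,2)$-block is $RR^{\top} + J_{3}^{2} = 4qI_{3} + 3J_{3}$. The key point is the $(1,2)$-block: since $Q$ has zero row sums ($Qj_{q}^{\top} = 0$) and $K$ has zero row sums ($J_{4}K^{\top} = 0$), one gets $PR^{\top} = 0$, so the $(1,2)$-block is simply $R^{\top}J_{3}$, which is the rank-one matrix $vj_{3}$ with $v := j_{q}^{\top} \otimes k^{\top}$; the $(2,1)$-block is its transpose.

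Finally I would evaluate $\det(G)$. Here $G_{22} = 4qI_{3} + 3J_{3}$ has $\det(G_{22}) = (4q)^{2}(4q+9)$, $G_{22}^{-1} = (4q)^{-1}(I_{3} - \tfrac{3}{4q+9}J_{3})$, and $j_{3}G_{22}^{-1}j_{3}^{\top} = \tfrac{3}{4q+9}$, so the Schur-complement identity gives $\det(G) = (4q)^{2}(4q+9)\det(S)$ with $S = G_{11} - \tfrac{3}{4q+9}vv^{\top}$ and $vv^{\top} = J_{q} \otimes (k^{\top}k)$. Decomposing $\mathbb{R}^{q} = \langle j_{q}\rangle \oplus j_{q}^{\perp}$: on $j_{q}^{\perp} \otimes \mathbb{R}^{4}$ the operator $S$ restricts to $4qI + 4(I \otimes J_{4})$, contributing the eigenvalue $4q+16$ with multiplicity $q-1$ and $4q$ with multiplicity $3(q-1)$; on $\langle j_{q}\rangle \otimes \mathbb{R}^{4}$ it restricts to the $4 \times 4$ matrix $4qI_{4} + (4-q)J_{4} - \tfrac{3q}{4q+9}k^{\top}k$, a rank-at-most-$2$ perturbation of $4qI_{4}$ whose determinant the identity $\det(I + AB) = \det(I + BA)$ evaluates (using $j_{4}k^{\top} = 0$, $j_{4}j_{4}^{\top} = 4$, $kk^{\top} = 12$) to $(4q)^{4} \cdot \tfrac{16}{4q+9}$. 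Multiplying the three contributions gives $\det(S) = (4q+16)^{q-1}(4q)^{3(q-1)} \cdot (4q)^{4} \cdot \tfrac{16}{4q+9}$, and hence $\det(G) = 16(4q)^{3q+3}(4q+16)^{q-1}$, as claimed.

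The step I expect to be the crux is the identity $PR^{\top} = 0$: it is exactly what turns a genuinely coupled $(4q+3) \times (4q+3)$ determinant into an essentially block-diagonal one, and it rests on the balance (zero row sums) of \emph{both} the Paley core and the gadget $K$ — without the careful choice of $R$ relative to $P$, the mixed term would survive and the determinant would not factor. Everything afterwards is bookkeeping with the three scalars $4q$, $4q+9$, $4q+16$, the only real risk being an arithmetic slip in the $4 \times 4$ determinant on the $j_{q}$-isotypic component.
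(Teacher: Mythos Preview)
Your proposal is correct and arrives at the same Gram matrix as the paper: the $(1,1)$-block $T = 4qI_{4q} + 4(I_{q}\otimes J_{4}) - J_{4q}$, the $(2,2)$-block $4qI_{3}+3J_{3}$, and the rank-one coupling $v\,j_{3}$ with $v = j_{q}^{\top}\otimes(3,-1,-1,-1)^{\top}$. Your emphasis on $PR^{\top}=0$ makes explicit the computation the paper simply asserts.

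Where the two diverge is in the evaluation of $\det(G)$. The paper performs elementary row and column operations on the last three rows, then uses linearity in one row to split $\det(G)$ into two block-triangular pieces, obtaining
\[
\det(MM^{\top}) = (64q^{3}+144q^{2})\det(T) + 48q^{2}\det\begin{pmatrix} T & v^{\top}\\ v & 0\end{pmatrix},
\]
and then \emph{states} (with a footnote on heuristic evaluation of determinants) that $\det(T) = 16(4q)^{3q}(4q+16)^{q-1}$ and that the bordered determinant equals $-3\det(T)$. You instead take the Schur complement with respect to the $3\times 3$ block and diagonalise the resulting $4q\times 4q$ operator via the tensor splitting $\mathbb{R}^{q} = \langle j_{q}\rangle \oplus j_{q}^{\perp}$, reducing the only nontrivial piece to a $2\times 2$ Sylvester determinant. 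Your route is more self-contained --- every eigenvalue and its multiplicity is exhibited, and nothing is deferred to ``standard techniques'' --- whereas the paper's route is shorter on the page precisely because it outsources the evaluation of $\det(T)$ and the bordered determinant. Either way, the factor $(4q)^{3}\det(T)$ emerges and the formula follows.
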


\begin{proof}
Let $T = 4qI_{4q} + 4I_{q} \otimes J_{4} - J_{4q}$ and $v = j_{q} \otimes (3, -1, -1,-1)$.
The Gram matrix of $M$ has the form
\[ MM^{\top} =
\begin{pmatrix}
T & v^{\top} & v^{\top} & v^{\top} \\
v & 4q+3 & 3 & 3  \\
v & 3 & 4q+3 & 3  \\
v & 3 & 3 & 4q+3
\end{pmatrix}\!.\]
Subtract row $4q+2$ from row $4q+3$, then subtract row $4q+1$ from row $4q+2$, and similarly for columns. Then use linearity of the determinant in row $4q+1$:
\[\det(MM^{\top}) = \det
\begin{pmatrix}
T & v^{\top} & \textbf{0} & \textbf{0} \\
\textbf{0} & 4q+3 & -4q & 0  \\
\textbf{0} & -4q & 8q & -4q  \\
\textbf{0} & 0 & -4q & 8q
\end{pmatrix} +
\det \begin{pmatrix}
T & v^{\top} & \textbf{0} & \textbf{0} \\
v & 0 & 0 & 0 \\
\textbf{0} & -4q & 8q & -4q \\
\textbf{0} & 0 & -4q & 8q
\end{pmatrix}\!.\]
Since these matrices are respectively block-upper triangular and block-lower triangular, the determinant may be evaluated as follows:
\vspace{-5pt}
\[ \det(MM^{\top}) = (64q^{3} + 144q^{2}) \det(T) + 48q^{2} \det \begin{pmatrix} T & v^{\top} \\ v & 0 \end{pmatrix}\!.\]

Standard techniques suffice to evaluate the determinant of $T$, which is $\det(T) = 16(4q)^{3q}(4q+16)^{q-1}$. The determinant of the bordered matrix may be computed via the Schur complement method\footnote{It would be remiss of the authors to finish this survey without commenting on the practical evaluation of determinants. Computing the rank of $T - 4qI$ easily gives the multiplicity of $4q$ as an eigenvalue, for example, and the remaining factors of the determinant are only slightly more difficult to guess and verify. Via Cauchy interlacing, one sees that $(4q)^{3q-2}(4q+16)^{q-2}$ divides the determinant of the bordered matrix. The quotient is a polynomial function of degree at most $5$. Evaluating the determinants of a few small matrices computationally and solving a polynomial interpolation problem, the result follows. For much more on the evaluation of determinants see the work of Krattenthaler \cite{Krattenthaler}.}, evaluating to~$(-3)\det(T)$.
The result follows:
\vspace{-3pt}
\[ \det(MM^{\top}) = (4q)^{3}\det(T) = 16(4q)^{3q+3}(4q+16)^{q-1}\,.\qedhere\]
\end{proof}

In line with previous theorems, we state Proposition \ref{OSDS} for Paley cores, but the result holds more generally whenever there exists a skew-Hadamard matrix of order $q+1$. In particular this holds for $q=15$.

In dimension $4q+3$, the Ehlich bound takes the form $4 \cdot 11^{6}\cdot 7^{-7} (4q+3)(4q+2)^{6}(4q)^{4q-4}$.
Cancelling common factors, the ratio of the determinant of the Gram matrix of Orrick, Solomon, Dowdeswell and Smith to the Ehlich bound is
\[
\frac{28^{7}q^{7}}{11^{6}(4q+4)(4q+2)^{6}(q+4)} \frac{(q+4)^{q}}{q^q}\,.
\]
Taking the limit as $q \rightarrow \infty$, the second fraction tends to $e^4$, while the first tends to $0$. For small prime powers the construction yields matrices remarkably close to the Ehlich bound. Some explicit computations are given in Table \ref{3mod4tab}.

We now begin the analysis of the second construction, described in Proposition \ref{border3mod4}.

\begin{lemma} \label{lem2}
For real numbers $a, b, c, d$, the eigenvalues of the $2k \times 2k$ matrix
	\[ M =  \begin{pmatrix}
 aI + b J & cJ \\ cJ & aI + dJ
\end{pmatrix} \]
are $kr_{1} + a$ and $kr_{2} + a$ with multiplicity $1$ where the $r_{i}$ are the roots of the equation $\lambda^{2} - (b+d)\lambda + (bd-c^{2})$, and the eigenvalue $a$ with multiplicity $2k-2$. This implies $\det(M) = ( a^{2} + ak(b+d) + k^{2}(bd-c^{2}) ) a^{2k-2}$.
\end{lemma}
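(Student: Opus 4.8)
The plan is to recognise $M$ as a scalar matrix plus a Kronecker product. Writing $S = \left(\begin{smallmatrix} b & c \\ c & d \end{smallmatrix}\right)$, the block structure of $M$ gives immediately $M = aI_{2k} + S \otimes J_{k}$, since the four blocks $bJ,\,cJ,\,cJ,\,dJ$ are exactly the blocks of $S \otimes J_{k}$. First I would record the spectra of the two factors separately: $S$ is real symmetric with trace $b+d$ and determinant $bd-c^{2}$, so its eigenvalues are the two roots $r_{1}, r_{2}$ of $\lambda^{2} - (b+d)\lambda + (bd-c^{2})$; and $J_{k}$ has eigenvalue $k$ once (with eigenvector $\mathbf{1}$) and eigenvalue $0$ with multiplicity $k-1$. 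Since the eigenvalues of a Kronecker product are the pairwise products of the eigenvalues of its factors, $S \otimes J_{k}$ has eigenvalues $kr_{1}$ and $kr_{2}$, each with multiplicity $1$, together with $0$ of multiplicity $2(k-1) = 2k-2$. Adding $aI_{2k}$ shifts every eigenvalue by $a$, yielding precisely the eigenvalue list in the statement.

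The determinant then follows by multiplying the eigenvalues: $\det(M) = (a+kr_{1})(a+kr_{2})\,a^{2k-2}$, and expanding the quadratic factor via $r_{1}+r_{2} = b+d$ and $r_{1}r_{2} = bd-c^{2}$ gives $(a+kr_{1})(a+kr_{2}) = a^{2} + ak(b+d) + k^{2}(bd-c^{2})$, which is the claimed formula.

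A reader who prefers to avoid the Kronecker product can instead exhibit an eigenbasis by hand. Applying $M$ to a vector of the form $(x\mathbf{1},\, y\mathbf{1})^{\top}$ produces $\big((ax+k(bx+cy))\mathbf{1},\,(ay+k(cx+dy))\mathbf{1}\big)^{\top}$, so such a vector is an eigenvector with eigenvalue $\mu$ exactly when $(x,y)^{\top}$ is an eigenvector of $kS$ with eigenvalue $\mu - a$; this produces the eigenvalues $a+kr_{1}$ and $a+kr_{2}$. On the other hand, if $u$ and $v$ are each orthogonal to $\mathbf{1}$ then $Ju = Jv = \mathbf{0}$, so $M(u,v)^{\top} = a(u,v)^{\top}$, contributing the eigenvalue $a$ with multiplicity $2(k-1)$. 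These two subspaces are complementary in $\mathbb{R}^{2k}$, so this accounts for the full spectrum.

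There is essentially no obstacle here; the one point worth a word of care is that the eigenvalue computation, and in particular the determinant formula, remains valid in the degenerate cases $r_{1} = r_{2}$, $kr_{i} = 0$, or $a = 0$, since in each case one is simply listing the $2k$ eigenvalues with multiplicity and taking their product, and $M$ is symmetric so no Jordan blocks can arise.
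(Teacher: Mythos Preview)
Your proof is correct. The paper's argument is essentially your ``by hand'' alternative: it lifts eigenvectors $(x_1,x_2)^{\top}$ of the $2\times 2$ matrix $S=\left(\begin{smallmatrix} b & c \\ c & d \end{smallmatrix}\right)$ to eigenvectors $(x_1 j_k, x_2 j_k)^{\top}$ of $M-aI_{2k}$, then observes that $M-aI_{2k}$ has rank~$2$ to account for the remaining $2k-2$ zero eigenvalues. Your primary route via $M = aI_{2k} + S\otimes J_k$ and the Kronecker-product spectrum is a tidier packaging of the same idea: it replaces the rank observation with the explicit eigenvalue $0$ of $J_k$ (multiplicity $k-1$) and makes the multiplicity bookkeeping automatic. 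Both approaches finish the determinant computation the same way, via $r_1+r_2=b+d$ and $r_1r_2=bd-c^{2}$.
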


\begin{proof}
 If
\[  \begin{pmatrix}
 b  & c \\ c & d
\end{pmatrix}  \begin{pmatrix}
 x_{1} \\ x_{2}
\end{pmatrix} = \lambda  \begin{pmatrix}
 x_{1} \\ x_{2}
\end{pmatrix}, \quad {\textrm{then } } \begin{pmatrix}
 b J & cJ \\ cJ & dJ
\end{pmatrix} \begin{pmatrix}
 x_{1} j_{k}\\ x_{2}j_{k}
\end{pmatrix} = k\lambda \begin{pmatrix}
 x_{1} j_{k}\\ x_{2}j_{k}
\end{pmatrix}\!.\]
Since $M - aI_{2k}$ clearly has rank $2$, all other eigenvalues are zero. The eigenvalues of $M$
are of the form $a + \lambda$ where $\lambda$ is an eigenvalue of $M - aI$, so the result follows.
The determinant evaluation follows by identifying the sum and product of the eigenvalues with the
trace and determinant of the $2 \times 2$ matrix, respectively.
\end{proof}

The proof of the next result is identical for the displayed matrices. The matrices of Corollary \ref{cor2mod4}
are in the form of matrix $M_{1}$ while those of Theorem \ref{Spence}, and those constructed by Djokovi\'c and Kotsireas
are in the form of matrix $M_{2}$.

\begin{proposition}\label{border3mod4}
Suppose that $R$ and $S$ are $k \times k$ matrices satisfying the identities
\[ RJ = JR = rJ, \quad SJ = JS = sJ, \quad RR^{\top} + SS^{\top} = (2k-2)I + 2J .\]
Let
\[
{\renewcommand\arraystretch{1.1}
M_{1} = \begin{pmatrix}
R & S & j_{k}^{\top} \\
S & -R & -j_{k}^{\top} \\
j_{k} & j_{k} & 1
\end{pmatrix}\!, \quad
M_{2} = \begin{pmatrix}
R & S & j_{k}^{\top} \\
S^{\top} & -R^{\top} & -j_{k}^{\top} \\
j_{k} & j_{k} & 1
\end{pmatrix}\!.
}
\]
Then
\[ \det(M_{i}M_{i}^{\top}) = ( 4k^{2}r^{2}- 16 k^2 r + 16k^{2} - 16k + 8kr + 4 ) (2k-2)^{2k-2}
\]
with the condition that $RS^{\top} = SR^{\top}$ for $M_{1}$ and no additional condition for $M_{2}$.
\end{proposition}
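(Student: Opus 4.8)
The plan is to compute the Gram matrix $M_iM_i^{\top}$ explicitly, recognise its leading $2k\times 2k$ block as an instance of the matrix treated in Lemma~\ref{lem2}, and dispatch the bordering row and column by a Schur complement. I will carry out the details for $M_1$; for $M_2$ the only change is that the second block row is assembled from $S^{\top}$ and $-R^{\top}$ in place of $S$ and $-R$, and the same block structure, hence the same determinant, results.

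First I would form $M_1M_1^{\top}$ block by block. From $RJ=JR=rJ$ and $SJ=JS=sJ$ one gets $Rj_k^{\top}=R^{\top}j_k^{\top}=rj_k^{\top}$ and $Sj_k^{\top}=S^{\top}j_k^{\top}=sj_k^{\top}$; combining these with $RR^{\top}+SS^{\top}=(2k-2)I+2J$, the hypothesis $RS^{\top}=SR^{\top}$, and $j_k^{\top}j_k=J$, a direct calculation gives
\[
M_1M_1^{\top}=\begin{pmatrix} K & w \\ w^{\top} & 2k+1 \end{pmatrix},\qquad
K=\begin{pmatrix} (2k-2)I+3J & -J \\ -J & (2k-2)I+3J \end{pmatrix},\qquad
w=\begin{pmatrix}(r+s+1)j_k^{\top}\\ (s-r-1)j_k^{\top}\end{pmatrix}.
\]
The hypothesis $RS^{\top}=SR^{\top}$ is used exactly to collapse the cross block $RS^{\top}-SR^{\top}-J$ of $K$ to $-J$. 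I would also record the scalar identity $r^{2}+s^{2}=4k-2$, which comes from applying both sides of $RR^{\top}+SS^{\top}=(2k-2)I+2J$ to $j_k^{\top}$ and using $RR^{\top}j_k^{\top}=r^{2}j_k^{\top}$, $SS^{\top}j_k^{\top}=s^{2}j_k^{\top}$ on the left against $(2k-2)j_k^{\top}+2kj_k^{\top}$ on the right.

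Now $K$ has the form in Lemma~\ref{lem2} with $a=2k-2$, $b=d=3$, $c=-1$; the quadratic $\lambda^{2}-6\lambda+8$ has roots $2$ and $4$, so $K$ has eigenvalue $2k-2$ with multiplicity $2k-2$ and simple eigenvalues $4k-2$ and $6k-2$, with eigenvectors $u_{+}=(j_k^{\top},j_k^{\top})^{\top}$ and $u_{-}=(j_k^{\top},-j_k^{\top})^{\top}$; in particular $\det K=(4k-2)(6k-2)(2k-2)^{2k-2}=4(3k-1)(2k-1)(2k-2)^{2k-2}$. Writing $w=s\,u_{+}+(r+1)\,u_{-}$ and using that $u_{\pm}$ are orthogonal with $\|u_{\pm}\|^{2}=2k$ gives $w^{\top}K^{-1}w=\dfrac{ks^{2}}{2k-1}+\dfrac{k(r+1)^{2}}{3k-1}$, so by the Schur complement formula
\begin{align*}
\det(M_1M_1^{\top})&=\det(K)\bigl(2k+1-w^{\top}K^{-1}w\bigr)\\
&=(2k-2)^{2k-2}\bigl(4(3k-1)(2k-1)(2k+1)-4k(3k-1)s^{2}-4k(2k-1)(r+1)^{2}\bigr).
\end{align*}
Substituting $s^{2}=4k-2-r^{2}$ and expanding, the $k^{3}$ contributions cancel, the $r^{2}$ terms combine to $4k^{2}r^{2}$, the $r$ terms to $-16k^{2}r+8kr$, and the part free of $r$ to $16k^{2}-16k+4$, which is the claimed value. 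The main obstacle is purely computational: keeping the block bookkeeping straight when forming $M_iM_i^{\top}$ (this is where the hypotheses on $R$ and $S$ enter, and where $M_1$ and $M_2$ differ in the placement of transposes), and then pushing the cancellations through the final expansion so that all dependence on $s$, and on $k^{3}$, disappears.
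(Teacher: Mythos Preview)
Your proof is correct and follows the same overall plan as the paper: compute the Gram matrix $M_iM_i^{\top}$, recognise the $2k\times 2k$ block via Lemma~\ref{lem2}, handle the bordering row and column, and eliminate $s$ using $r^{2}+s^{2}=4k-2$. The difference is tactical. The paper clears the last column by subtracting multiples of the final row, which absorbs the border into modified parameters $b,c,d$ for Lemma~\ref{lem2}; the resulting determinant formula is then simplified with computer algebra. You instead apply Lemma~\ref{lem2} to the unmodified block $K$ (with $b=d=3$, $c=-1$) to obtain its full eigendata, express $w$ in the eigenbasis $u_{\pm}$, and compute the Schur complement $2k+1-w^{\top}K^{-1}w$ by hand. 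Your route is a little cleaner in that it exploits the symmetry $b=d$ and makes the final cancellation of $k^{3}$ and $s$ transparent without recourse to a computer algebra system; the paper's route is perhaps more mechanical and would adapt more readily to a block $K$ without such special structure.
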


\begin{proof}
Given the hypotheses, it may be computed directly that
\[ M_{i}M_{i}^{\top} =
\begin{pmatrix}
(2k-2)I + 3J & -J & (1+r+s)j_{k}^{\top}\\
-J& (2k-2)I + 3J & (-1-r+s)j_{k}^{\top}\\
(1+r+s)j_{k} & (-1-r+s)j_{k} & 2k+1
\end{pmatrix}\!.\]
Subtracting multiples of the last row, we clear the last column:
\[
\begin{pmatrix}
(2k-2)I + bJ & cJ & \textbf{0}\\
cJ& (2k-2)I + dJ & \textbf{0}\\
(1+r+s)j_{k} & (-1-r+s)j_{k} & 2k+1
\end{pmatrix}\!,
\]
where $b = 3 - \frac{(1+r+s)^{2}}{2k+1}$,\; $c  = -1- \frac{(1+r-s)(1+r+s)}{2k+1}$ and $d = 3 - \frac{(-1-r+s)^{2}}{2k+1}$.
Applying Lemma \ref{lem2} to the sub-matrix complementary to the last row and column with $a = 2k-2$ and $b, c,d $ as given,
with simplification performed in MAGMA \cite{MAGMA}, we obtain the following factorisation of the determinant:
\[ ( 48k^3 - 8k^2 r^2 - 16 k^2 r - 12k^2 s^2 - 24 k^2 + 4k r^2 + 8kr + 4ks^2 - 8k + 4 ) (2k-2)^{2k-2}.\]
Recall that $r^{2} + s^{2} = 4k-2$, and eliminate the $s^{2}$ terms:
\begin{align*} \det(M_{i}M_{i}^{\top}) &=
 ( 48k^3 - 12k^2 (r^2 +s^{2}) + 4k^{2}r^{2}- 16 k^2 r - 24 k^2 + 4k( r^2 + s^{2}) + 8kr - 8k + 4 ) (2k-2)^{2k-2}  \\
& =  ( 4k^{2}r^{2}- 16 k^2 r + 16k^{2} - 16k + 8kr + 4 ) (2k-2)^{2k-2}. \qedhere
\end{align*}
\end{proof}

In Proposition \ref{border3mod4} the result appears asymmetric in $r$ and $s$.
In fact, from a pair of matrices $R, S$ satisfying $RR^{\top} + SS^{\top} = (2k-2)I + 2J$,
four different determinants are obtained, depending on the row-sum of the matrix on the principal diagonal,
which is drawn from $\{ \pm r, \pm s\}$. For sufficiently large values of
$k$, the terms $4k^{2}r^{2} - 16k^{2}r$ dominate and the determinant is
maximised when $r$ is large and
\vadjust{\goodbreak}
negative.

\begin{theorem} \label{improvedNR}
Let $M$ be a matrix of order $n=2k+1$ as in Proposition \ref{border3mod4}, with $r^{2} + s^{2} = 4k-2$. Then $\det (M)$ achieves a fraction at least ${r^{2}}/{3n}$ of the Ehlich bound.
\begin{itemize}
	\item A matrix exceeding $0.34$ of the Ehlich bound exists of order $n = 4q^{2} + 4q + 3$ for each prime power $q \geq 379$. A matrix exceeding $\frac{1}{3}$ of the bound exists for each $q \geq 47$.
	\item A matrix exceeding $0.48$ of the Ehlich bound exists of order $n = 2q^{2} + 2q + 3$ for each $q \geq 233$. A matrix exceeding $0.47$ of the bound exists for each $q \geq 43$.
\end{itemize}
\end{theorem}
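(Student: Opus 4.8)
The plan is to read the determinant off Proposition \ref{border3mod4} and compare it with the Ehlich bound of Theorem \ref{EhlichBound}, after choosing $R$ and $S$ so that the row-sum $r$ of the block placed on the main diagonal is negative and as large as possible in absolute value.

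\emph{Step 1: the general estimate.} Since $n=2k+1$, we have $2k=n-1$ and $2k-2=n-3$, so Proposition \ref{border3mod4} reads $\det(M_{i}M_{i}^{\top})=\bigl(4k^{2}r^{2}-16k^{2}r+16k^{2}-16k+8kr+4\bigr)(n-3)^{n-3}$. The sign of $r$ is at our disposal: replacing $R$ by $-R$ (and, in the $M_{1}$ case, also $S$ by $-S$, which keeps $RS^{\top}=SR^{\top}$) leaves $RR^{\top}+SS^{\top}=(2k-2)I+2J$ intact but flips the sign of $r$, so we may take $r<0$. Rewriting the quadratic factor as $4k^{2}r^{2}-8kr(2k-1)+16k(k-1)+4$, every term past the first is nonnegative, so the factor is $\ge 4k^{2}r^{2}=r^{2}(n-1)^{2}$, whence $\det(M)\ge|r|\,(n-1)\,(n-3)^{(n-3)/2}$. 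Dividing by the Ehlich determinant bound of Theorem \ref{EhlichBound} (applicable since $n\ge 63$ in every case below), cancelling the common power $(n-3)^{(n-7)/2}$, and bounding the leftover rational function of $n$ gives the general bound of the theorem.

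\emph{Step 2: the two families.} For $n=4q^{2}+4q+3$, take $k=2q^{2}+2q+1$ and $R=S=-W$, where $W$ is the Brouwer--Whiteman maximal-determinant matrix of order $(q+1)^{2}+q^{2}=2q^{2}+2q+1$ (Theorem \ref{NRthm}, extended to odd prime powers); then $RR^{\top}+SS^{\top}=2WW^{\top}=(4q^{2}+4q)I+2J=(2k-2)I+2J$, $RS^{\top}=SR^{\top}$, and $r=s=-(2q+1)$, so $M_{1}$ applies. For $n=2q^{2}+2q+3$, take $k=q^{2}+q+1$ and let $(R,S)$ be Spence's circulant pair of Theorem \ref{Spence}, with $R$ negated so that its row sum is $-(2q+1)$; this is of type $M_{2}$, with $r=-(2q+1)$ and $s=\pm 1$. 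In either case $r^{2}=(2q+1)^{2}$, and inserting the exact values of $r$ and $n$ into Step 1 produces a ratio whose limit as $q\to\infty$ is $\sqrt{7^{7}/(4\cdot 11^{6})}$ times $\lim|r|/\sqrt n$: about $0.341$ for the first family (where $|r|/\sqrt n\to 1$) and about $0.482$ for the second (where $|r|/\sqrt n\to\sqrt 2$).

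\emph{Step 3: explicit thresholds.} What remains is to turn these limits into the stated inequalities (``$>0.34$ for $q\ge 379$'', and so on), which means bounding below the quantities discarded in Step 1 — the factor $\bigl((n-3)/(n-1)\bigr)^{4}$, the ratio $(n-1)/n$, and the lower-order terms of the quadratic in $r$ — and checking that at the claimed thresholds they are small enough. Since everything in sight is an explicit function of $q$, eventually monotone, this reduces to a finite check, with $q=379,47,233,43$ the exact crossover points. I expect this numerical bookkeeping, rather than any of the structural steps, to be the only genuinely laborious part of the argument.
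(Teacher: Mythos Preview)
Your proposal is correct and follows essentially the same route as the paper: bound the quadratic factor from Proposition~\ref{border3mod4} below (taking $r<0$), compare to the Ehlich bound of Theorem~\ref{EhlichBound}, instantiate the two families via Theorem~\ref{NRthm}/Corollary~\ref{cor2mod4} and Theorem~\ref{Spence} with $r=-(2q+1)$, and finish with a numerical check for the thresholds. The only cosmetic difference is that the paper parameterises $r=-\alpha\sqrt{2k-1}$ and retains one more lower-order term before taking the limit, whereas you discard everything past $4k^{2}r^{2}$; this does not affect the limiting constants or the structure of the argument.
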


\begin{proof}
In terms of $k$, the Ehlich bound is $4 \cdot 11^{6} \cdot 7^{-7} (2k+1)(2k)^{6}(2k-2)^{2k-6}$.
Let $r$ and $s$ be the constant row sums of a matrix achieving the Ehlich--Wojtas bound. Without loss of generality, we may
assume that $|r| \geq |s|$ and $r < 0$. Since $r^{2} + s^{2} = 4k-2$, it follows that $-\sqrt{4k-2} \leq r \leq -\sqrt{2k-1}$. It will be convenient to write $r = -\alpha \sqrt{2k-1}$ where $1 \leq \alpha \leq \sqrt{2}$. So Proposition~ \ref{border3mod4}
gives a matrix with determinant bounded below by
\begin{align*}
\det(M) & =  ( 4k^{2}r^{2}- 16 k^2 r + 16k^{2} - 16k + 8kr + 4 ) (2k-2)^{2k-2} \\
& \geq  (4\alpha^{2}k^{2}(2k-1) + 16\alpha k^{2}\sqrt{2k-1} + 16k^{2} - 16 k - 8\alpha k\sqrt{2k-1} +4 ) (2k-2)^{2k-2} \\
& \geq  (8\alpha^{2}k^{3} + 16\alpha \sqrt{2}k^{5/2} )(2k-2)^{2k-2},
\end{align*}
where moving from the first line to the second we use that $4k^{2}r^{2}- 16 k^2 r + 8kr$ grows as $r \leq -\sqrt{2k-1}$ tends towards $-\sqrt{4k-2}$. Moving from the second line to the third we observe that the sum of the discarded terms is positive and increasing for all $k \geq 4$.
Cancelling common factors, the ratio to the Ehlich bound is at least
\[
\advance\thinmuskip-1mu\advance\thickmuskip-1mu\medmuskip2mu
\frac{  (8\alpha k^{3} + 16\sqrt{2}\alpha k^{5/2} )(2k-2)^{4} }{ \frac{4 \cdot 11^{6}}{7^{7}} (2k+1)(2k)^{6}} =
\frac{7^{7}}{4 \cdot 11^{6}}\frac{  \left(\alpha^{2}k^{3} + 2\sqrt{2}\alpha k^{5/2} \right)(k-1)^{4} }{  k^{7}+(1/2)k^{6} } =
\frac{7^{7}}{4 \cdot 11^{6}}\frac{  \alpha^{2} k^{7} + 2\sqrt{2}\alpha k^{13/2} - 4k^{6} + O(k^{11/2}) }{  k^{7}+(1/2)k^{6} }.\]
Setting $\alpha = 1$ corresponds to row sums $r = s$ in the maximal determinant matrix of order $n \equiv 2 \mod 4$. Taking the limit as $k \rightarrow \infty$ gives $\frac{7^{7}}{4\cdot 11^{6}} \sim 0.1162$ which is a ratio of the determinants of Gram matrices. Taking a square root gives the claimed lower bound $\sqrt{7^{7} \cdot 2^{-2} \cdot 11^{-6}} \sim 0.34$. A computation shows that the ratio exceeds $\frac{1}{3}$ for $n \geq 8563$ and exceeds $0.34$ for $n \geq 569659$. The bound $4q^{2} + 4q + 3 \geq n$ holds for prime powers $q \geq 47$ and $q \geq 379$ respectively.

Setting $\alpha = \sqrt{2}$ corresponds to setting $r \sim \sqrt{4k-2}$ while $s$ is bounded. Evaluating the displayed equation yields a determinant achieving $\sqrt{7^{7}\cdot2^{-1} \cdot11^{-6}} \sim 0.48$ of the Ehlich bound. The row sums of Corollary \ref{cor2mod4} satisfy $r = s$, while those of Theorem \ref{Spence} satisfy $r^{2} = 4k-3$ and $s^{2} = 1$. A computation shows that the ratio exceeds $0.47$ for $n \geq 3571$ and exceeds $0.48$ for $n \geq 106357$. The bound $2q^{2} + 2q + 3 \geq n$ holds for prime powers $q \geq 43$ and $q \geq 233$ respectively.
\end{proof}

Note that while the constant of Corollary \ref{BrentCor} is smaller than that obtained in Theorem \ref{improvedNR}, the set of orders at which these matrices exist is much denser.

\begin{table}
\begin{center}
\begin{tabular}{cccccc}
$n$ & Upper Bound 							& KMS \cite{KMS}	 	& Prop \ref{OSDS} 	& Prop \ref{border3mod4} 	& Computation \\
\hline
 23 & $\sqrt{45} \cdot 22^{11}$					& $0.3882$	& - 			& - 			& $0.7091$	\\
 27 & $\sqrt{53} \cdot 26^{13}$  				& $0.3600$	& -			& $0.3639$	& $0.7359$	\\
 31 & $\sqrt{61} \cdot 30^{15}$					& $0.3371$	& $0.7060$	& $0.4354$	& $0.7278$	\\
 35 & $\sqrt{69} \cdot 34^{17}$ 					& $0.3181$	& -			& -			& $0.7141$	\\
 39 & $\sqrt{77} \cdot 38^{19}$ 					& $0.3020$	& - 			& $0.3853$	& $0.7253$\\
 43 & $\sqrt{85} \cdot 42^{21}$ 					& $0.2881$	& -			& $0.4477$	& $0.7358$\\
 47 & $\sqrt{93} \cdot 46^{23}$ 					& $0.2760$	& $0.7035$	& $0.4273$	& $0.7035$ \\
 51 & $\sqrt{101} \cdot 50^{25}$ 				& $0.2653$	& -			& $0.3347$	& $0.6481$ \\
 55 & $\sqrt{109} \cdot 54^{27}$ 				& $0.2557$	& -			& $0.3936$	& $0.6544$ \\
 59 & $\sqrt{117} \cdot 58^{29}$ 				& $0.2471$	& -			& -			& $0.7351$ \\
 63 &$\mu \cdot 63^{1/2}\cdot 62^3 \cdot 60^{28}$ 	& $0.2878$	& $0.8146$	& $0.5216$	& $0.9662$ \\
 67 &$\mu \cdot 67^{1/2}\cdot 66^3 \cdot 64^{30}$	& $0.2808$	& -			& $0.4296$	& $0.8635$ \\
 71 &$\mu \cdot 71^{1/2}\cdot 70^3 \cdot 68^{32}$	& $0.2742$	& - 			& - 			& $0.8804$ \\
 75 &$\mu \cdot 75^{1/2}\cdot 74^3 \cdot 72^{34}$	& $0.2608$	& - 			& $0.4834$	& $0.8613$ \\
 79 &$\mu \cdot 79^{1/2}\cdot 78^3 \cdot 76^{36}$	& $0.2623$	& $0.7921$	& - 			& $0.8591$ \\
 83 &$\mu \cdot 83^{1/2}\cdot 82^3 \cdot 80^{38}$	& $0.2569$	& -			& $0.3909$	& $0.8561$ \\
 87 &$\mu \cdot 87^{1/2}\cdot 86^3 \cdot 84^{40}$	& $0.2517$ 	& -			& $0.5222$	& $0.8527$ \\
 91 &$\mu \cdot 91^{1/2}\cdot 90^3 \cdot 88^{42}$	& $0.2469$	& -			& $0.5117$	& $0.8501$ \\
 95 &$\mu \cdot 95^{1/2}\cdot 94^3 \cdot 92^{44}$	& $0.2424$	& $0.7653$	& -			& $0.8447$ \\
 99 &$\mu \cdot 99^{1/2}\cdot 98^3 \cdot 96^{46}$	& $0.2380$	& - 			& $0.4925$	& $0.8496$
 \end{tabular}
\caption{Large determinants with $n \equiv 3 \mod 4$, where $\mu = \sqrt{4\cdot 11^6 \cdot 7^{-7}}$.}
\label{3mod4tab}
\end{center}
\end{table}

We conclude with a table of large determinants for $n \equiv 3 \mod 4$, with $23 \leq n \leq 99$. Following Brent and Yedidia \cite{BrentYedidia}, we display the Barba bound for $n \leq 59$ and the Ehlich bound for $n \geq 63$. In all cases we report the ratio of the determinant of the constructed matrix with the bound given in the second column.

The construction of Koukouvinos, Mitrouli and Seberry \cite{KMS} uses minors of Hadamard matrices and symmetric designs\footnote{Koukouvinos, Mitrouli and Seberry provide two bounds in orders of the form $n = 4t^{2}$. From a maximal minor of a
normalised Hadamard matrix, one obtains a matrix with determinant $(4t^{2})^{2t^{2} - 1}$. From the $\{\pm 1\}$-incidence matrix of a design with parameters $(4t^{2}, 2t^{2}+t, t^{2} + t)$ one obtains a matrix with determinant $2t \cdot (4t^{2})^{2t^{2} - 2}$. The first bound always exceeds the second, but a mis-transcription of the second bound appears in Table 2 of \cite{KMS}.}. We use Proposition \ref{OSDS} to obtain the entries of the fourth column and the results of Djokovi\'c and Kotsireas \cite{DK2}, together with Proposition \ref{border3mod4}, to obtain the entries of the fifth column.

For many years, Orrick maintained a webpage listing the largest known determinant at orders up to $120$. While this page is no longer available it can be accessed via the WayBack Machine \cite{OrrickWeb}. The entries in the last column of Table \ref{3mod4tab} are drawn from this source. For $n \leq 59$, the bound given in Table \ref{3mod4tab} is not the best known, so the ratios computed in the final column differ from the values computed by Orrick.

\section*{Acknowledgements}

The authors would like to thank Michael Tuite and James Ward for helpful discussions on determinant theory,
and to thank Rob Craigen and Guillermo Nu\~{n}ez Ponasso for comments on a draft of this paper. The authors acknowledge Will Orrick, who researched the history of the maximal determinant problem extensively, and made his findings available on his webpage. The authors thank the anonymous referees for their comments which helped us to improve the exposition of this paper.

\bigskip
\noindent
\textsc{Patrick Browne}\\
Technological University of the Shannon: Midlands Midwest, Ireland\\
\href{patrick.browne@tus.ie}{patrick.browne@tus.ie}
\medskip

\noindent
\textsc{Ronan Egan}\\
Dublin City University\\
\href{ronan.egan@dcu.ie}{ronan.egan@dcu.ie}
\medskip

\noindent
\textsc{Fintan Hegarty}\\
Mathematical Sciences Publishers, Berkeley, CA, USA\\
\href{fintan@msp.org}{fintan@msp.org}
\medskip

\noindent
\textsc{Padraig \'O Cath\'ain}\\
Worcester Polytechnic Institute, Worcester, MA, USA\\
\href{pocathain@wpi.edu}{pocathain@wpi.edu}

\bibliographystyle{abbrv}
\flushleft{
\bibliography{Biblio2020}

\def\Dbar{\leavevmode\lower.6ex\hbox to 0pt{\hskip-.23ex \accent"16\hss}D}
\begin{thebibliography}{10}

\bibitem{Barba}
G.~{Barba}.
\newblock {Intorno al teorema di Hadamard sui determinanti a valore massimo}.
\newblock {\em {Giorn. Mat. Battaglini, III. Ser.}}, 71:70--86, 1933.

\bibitem{BeckBell}
E.~F. Beckenbach and R.~Bellman.
\newblock {\em Inequalities}.
\newblock Ergebnisse der Mathematik und ihrer Grenzgebiete, (N.F.), Band 30.
  Springer-Verlag, New York, 1965.
\newblock Second revised printing.

\bibitem{KarolBengtsson}
I.~Bengtsson and K.~\.{Z}yczkowski.
\newblock {\em Geometry of quantum states. An introduction to quantum
  entanglement.}
\newblock Cambridge University Press, Cambridge, 2nd edition, 2017.

\bibitem{Best77}
M.~Best.
\newblock The excess of a {H}adamard matrix.
\newblock {\em Indagationes Mathematicae}, 80(5):357--361, 1977.

\bibitem{BJL}
T.~Beth, D.~Jungnickel, and H.~Lenz.
\newblock {\em Design theory. {V}ol. {I}}, volume~69 of {\em Encyclopedia of
  Mathematics and its Applications}.
\newblock Cambridge University Press, Cambridge, second edition, 1999.

\bibitem{MAGMA}
W.~Bosma, J.~Cannon, and C.~Playoust.
\newblock The {M}agma algebra system. {I}. the user language.
\newblock {\em J. of Symbolic Comput.}, 24:235--265, 1997.

\bibitem{BOOZ}
R.~P. Brent, W.~Orrick, J.~H. Osborn, and P.~Zimmermann.
\newblock Maximal determinants and saturated {D}-optimal designs of orders 19
  and 37.
\newblock {\em arXiv:1112.4160}, 2011.

\bibitem{BrentOsbornEJC}
R.~P. Brent and J.~H. Osborn.
\newblock General lower bounds on maximal determinants of binary matrices.
\newblock {\em Electron. J. Combin.}, 20(2):\, art.~id. 15, 2013.

\bibitem{BrentOsbornProb}
R.~P. Brent, J.~H. Osborn, and W.~D. Smith.
\newblock Probabilistic lower bounds on maximal determinants of binary
  matrices.
\newblock {\em Australas. J. Combin.}, 66:350--364, 2016.

\bibitem{BrentYedidia}
R.~P. Brent and A.~B. Yedidia.
\newblock Computation of maximal determinants of binary circulant matrices.
\newblock {\em J. Integer Seq.}, 21(5):\,art.~id.~18.5.6, 2018.

\bibitem{Brouwer}
A.~E. Brouwer.
\newblock {\em An infinite series of symmetric designs}, volume 202 of {\em
  Afdeling Zuivere Wiskunde}.
\newblock Mathematisch Centrum, Amsterdam, 1983.

\bibitem{CKM}
T.~Chadjipantelis, S.~Kounias, and C.~Moyssiadis.
\newblock The maximum determinant of {$21\times 21$} {$(+1,-1)$}-matrices and
  {$D$}-optimal designs.
\newblock {\em J. Statist. Plann. Inference}, 16(2):167--178, 1987.

\bibitem{CKF1}
V.~Chasiotis, S.~Kounias, and N.~Farmakis.
\newblock The {D}-optimal saturated designs of order 22.
\newblock {\em Discrete Math.}, 341(2):380--387, 2018.
\newblock Corrigendum: \emph{ibid} 342(7):2161, 2019.

\bibitem{Cohn64-1}
J.~H.~E. Cohn.
\newblock On determinants with elements {$\pm 1$}. {II}.
\newblock {\em Bull. London Math. Soc.}, 21(1):36--42, 1989.

\bibitem{Cohn2000}
J.~H.~E. Cohn.
\newblock Almost {$D$}-optimal designs.
\newblock {\em Util. Math.}, 57:121--128, 2000.

\bibitem{CraigenICA}
R.~Craigen.
\newblock A direct approach to {H}adamard's inequality.
\newblock {\em Bull. Inst. Combin. Appl.}, 12:28--32, 1994.

\bibitem{Craigen}
R.~Craigen.
\newblock Signed groups, sequences, and the asymptotic existence of {H}adamard
  matrices.
\newblock {\em J. Combin. Theory Ser. A}, 71(2):241--254, 1995.

\bibitem{CraigenSeberryZhang}
R.~Craigen, J.~Seberry, and X.~M. Zhang.
\newblock Product of four {H}adamard matrices.
\newblock {\em J. Combin. Theory Ser. A}, 59(2):318--320, 1992.

\bibitem{deLauneyFlannery}
W.~{d}e Launey and D.~Flannery.
\newblock {\em Algebraic design theory}.
\newblock Mathematical Surveys and Monographs, vol.~175. American Mathematical
  Society, Providence, RI, 2011.

\bibitem{DK1}
D.~{\v{Z}}. Djokovi\'{c} and I.~S. Kotsireas.
\newblock New results on {D}-optimal matrices.
\newblock {\em J. Combin. Des.}, 20(6):278--289, 2012.

\bibitem{DK2}
D.~{\v{Z}}. Djokovi\'{c} and I.~S. Kotsireas.
\newblock D-optimal matrices of orders 118, 138, 150, 154 and 174.
\newblock In {\em Algebraic design theory and {H}adamard matrices}, volume 133
  of {\em Springer Proc. Math. Stat.}, pages 71--82. Springer, Cham, 2015.

\bibitem{Ehlich}
H.~Ehlich.
\newblock Determinantenabsch\"{a}tzungen f\"{u}r bin\"{a}re {M}atrizen.
\newblock {\em Math. Z.}, 83:123--132, 1964.

\bibitem{FarmakisKounias}
N.~Farmakis and S.~Kounias.
\newblock The excess of {H}adamard matrices and optimal designs.
\newblock {\em Discrete Math.}, 67(2):165--176, 1987.

\bibitem{Fischer}
E.~{Fischer}.
\newblock {\"Uber den Hadamardschen Determinantensatz}.
\newblock {\em {Arch. der Math. u. Phys. (3)}}, 13:32--40, 1907.

\bibitem{Fredholm}
I.~Fredholm.
\newblock Oeuvres compl\`{e}tes de {I}var {F}redholm.
\newblock {\em Litos Reprotryck, Malm\"{o}}, 1955.

\bibitem{Hadamard1893}
J.~Hadamard.
\newblock R\'{e}solution d'une question relative aux d\'{e}terminants.
\newblock {\em Bull. Sci. Math.}, 17:240--246, 1893.

\bibitem{HoradamHadamard}
K.~J. Horadam.
\newblock {\em Hadamard matrices and their applications}.
\newblock Princeton University Press, Princeton, NJ, 2007.

\bibitem{HornJohnson}
R.~A. Horn and C.~R. Johnson.
\newblock {\em Matrix analysis}.
\newblock Cambridge University Press, Cambridge, second edition, 2013.

\bibitem{IrelandRosen}
K.~Ireland and M.~Rosen.
\newblock {\em A classical introduction to modern number theory}, volume~84 of
  {\em Graduate Texts in Mathematics}.
\newblock Springer-Verlag, New York, second edition, 1990.

\bibitem{IsaacsAlgebra}
I.~M. Isaacs.
\newblock {\em Algebra: a graduate course}, volume 100 of {\em Graduate Studies
  in Mathematics}.
\newblock American Mathematical Society, Providence, RI, 2009.
\newblock Reprint of the 1994 original.

\bibitem{Kharaghani428}
H.~Kharaghani and B.~Tayfeh-Rezaie.
\newblock A {H}adamard matrix of order 428.
\newblock {\em J. Combin. Des.}, 13(6):435--440, 2005.

\bibitem{KKS}
C.~Koukouvinos, S.~Kounias, and J.~Seberry.
\newblock Supplementary difference sets and optimal designs.
\newblock {\em Discrete Math.}, 88(1):49--58, 1991.

\bibitem{KMS}
C.~Koukouvinos, M.~Mitrouli, and J.~Seberry.
\newblock Bounds on the maximum determinant for {$(1,-1)$} matrices.
\newblock {\em Bull. Inst. Combin. Appl.}, 29:39--48, 2000.

\bibitem{Krattenthaler}
C.~Krattenthaler.
\newblock Advanced determinant calculus.
\newblock {\em S\'em. Lothar. Combin.}, 42:art. id. B42q, 1999.
\newblock The Andrews Festschrift (Maratea, 1998).

\bibitem{Maritz}
P.~Maritz.
\newblock Sir {T}homas {M}uir, 1844--1934.
\newblock {\em Linear Algebra Appl.}, 411:3--67, 2005.

\bibitem{Meisner}
D.~B. Meisner.
\newblock On a construction of regular {H}adamard matrices.
\newblock {\em Atti Accad. Naz. Lincei}, 3(4):233--240, 1992.

\bibitem{Muir1910}
T.~{Muir}.
\newblock An upper limit for the value of a determinant.
\newblock {\em South Afr. R. S. Trans.}, 1:323--334, 1910.

\bibitem{MuzychukXiang}
M.~Muzychuk and Q.~Xiang.
\newblock Symmetric {B}ush-type {H}adamard matrices of order {$4m^4$} exist for
  all odd {$m$}.
\newblock {\em Proc. Amer. Math. Soc.}, 134(8):2197--2204, 2006.

\bibitem{NeubauerRadcliffe}
M.~G. Neubauer and A.~J. Radcliffe.
\newblock The maximum determinant of {$\pm1$} matrices.
\newblock {\em Linear Algebra Appl.}, 257:289--306, 1997.

\bibitem{OrrickWeb}
W.~Orrick.
\newblock The {H}adamard maximal determinant problem.
\newblock
  \url{https://web.archive.org/web/20200219170713/http://www.indiana.edu/~maxdet/}.
\newblock Accessed 04 Aug 2021.

\bibitem{Orrick15}
W.~P. Orrick.
\newblock The maximal {$\{-1,1\}$}-determinant of order 15.
\newblock {\em Metrika}, 62(2-3):195--219, 2005.

\bibitem{OrrickSolomon}
W.~P. Orrick and B.~Solomon.
\newblock Large-determinant sign matrices of order {$4k+1$}.
\newblock {\em Discrete Math.}, 307(2):226--236, 2007.

\bibitem{OSDS}
W.~P. Orrick, B.~Solomon, R.~Dowdeswell, and W.~D. Smith.
\newblock New lower bounds for the maximal determinant problem.
\newblock {\em arXiv:math/0304410}, 2003.

\bibitem{Paley1933}
R.~Paley.
\newblock On orthogonal matrices.
\newblock {\em J. Math. Phys.}, 12:311--320, 1933.

\bibitem{RagPes}
D.~Raghavarao and H.~Pesotan.
\newblock Embedded {$(S_{n}\pm I_{n})$}-matrices.
\newblock {\em Utilitas Math.}, 11:227--236, 1977.

\bibitem{Seberry17}
J.~Seberry.
\newblock {\em Orthogonal designs: Hadamard matrices, quadratic forms and
  algebras}.
\newblock Springer, Cham, 2017.
\newblock Revised and updated edition of the 1979 original.

\bibitem{SeberryYamadaProducts}
J.~Seberry and M.~Yamada.
\newblock On the products of {H}adamard matrices, {W}illiamson matrices and
  other orthogonal matrices using {$M$}-structures.
\newblock {\em J. Combin. Math. Combin. Comput.}, 7:97--137, 1990.

\bibitem{Spence}
E.~Spence.
\newblock Skew-{H}adamard matrices of the {G}oethals--{S}eidel type.
\newblock {\em Canadian J. Math.}, 27(3):555--560, 1975.

\bibitem{StantonSprott}
R.~G. Stanton and D.~A. Sprott.
\newblock A family of difference sets.
\newblock {\em Canad. J. Math.}, 10:73--77, 1958.

\bibitem{Sylvester}
J.~Sylvester.
\newblock Thoughts on inverse orthogonal matrices, simultaneous sign
  successions, and tessellated pavements in two or more colours, with
  applications to {N}ewton's rule, ornamental tile-work, and the theory of
  numbers.
\newblock {\em Phil. Mag.}, 34(1):461--475, 1867.

\bibitem{Tamura}
H.~Tamura.
\newblock D-optimal designs and group divisible designs.
\newblock {\em J. Combin. Des.}, 14(6):451--462, 2006.

\bibitem{Whiteman}
A.~L. {Whiteman}.
\newblock {A family of $D$-optimal designs}.
\newblock {\em {Ars Comb.}}, 30:23--26, 1990.

\bibitem{Wirtinger}
W.~Wirtinger.
\newblock Zum {H}adamardschen {D}eterminantensatz.
\newblock {\em Monatsh. Math. Phys.}, 18(1):158--160, 1907.

\bibitem{Wojtas}
M.~Wojtas.
\newblock On {H}adamard's inequality for the determinants of order
  non-divisible by {$4$}.
\newblock {\em Colloq. Math.}, 12:73--83, 1964.

\bibitem{XiaMenonDiffSets}
M.~Y. Xia.
\newblock Some infinite classes of special {W}illiamson matrices and difference
  sets.
\newblock {\em J. Combin. Theory Ser. A}, 61(2):230--242, 1992.

\end{thebibliography}
}

\end{document}